\pgfplotsset{compat=1.18} 
\newcommand{\x}{\boldsymbol x}
\newcommand{\R}{\mathbb R}
\newcommand{\N}{\mathbb N}
\newcommand{\eps}{\varepsilon}
\newcommand{\dd}{\ensuremath{\,\mathrm{d}}}
\DeclareMathOperator*{\esssup}{ess\,sup}
\DeclareMathOperator*{\essinf}{ess\,inf}
\renewcommand{\epsilon}{\varepsilon}
\crefname{ineq}{Inequality}{Inequalities}
\crefname{assumption}{Assumption}{Assumptions}
\crefname{remark}{Rem.}{Remarks}
\crefname{lemma}{Lem.}{Lemmata}
\crefname{proposition}{Prop.}{Propositions}
\crefname{theorem}{Thm.}{Theorems}
\crefname{assumption}{Asm.}{Assumptions}
\crefname{definition}{Defn.}{Definitions}
\crefname{example}{Ex.}{Examples}
\crefname{section}{Sec.}{Sections}
\crefname{figure}{Fig.}{Figures}
\crefname{equation}{Eq.}{Eqs.}
\definecolor{mygreen}{RGB}{28,172,0} 
\definecolor{mylilas}{RGB}{170,55,241}
\definecolor{faublue}{RGB}{0,56,101}
\definecolor{fauorange}{RGB}{201,147,19}
\definecolor{faured}{RGB}{141,20,41}
\definecolor{faucyan}{RGB}{0,177,235}
\definecolor{faugreen}{RGB}{0,122,93}
\definecolor{fau-nat-green}{RGB}{0,122,93}
\definecolor{faugray}{RGB}{152,164,174}
\definecolor{matlabred}{RGB}{162, 20, 47}
\definecolor{matlabgreen}{RGB}{119, 173, 48}
\definecolor{matlabcyan}{RGB}{77, 191, 239}
\definecolor{matlabblue}{RGB}{0, 114, 190}
\definecolor{matlabyellow}{RGB}{238, 178, 32}
\definecolor{matlaborange}{RGB}{218, 83, 25}
\definecolor{matlabpurple}{RGB}{126, 47, 142}
\definecolor{matlabred1}{RGB}{121.5000, 43.5000, 82.7500}
\definecolor{matlabred2}{RGB}{81.0000, 67.0000, 118.5000}
\definecolor{matlabred3}{RGB}{40.5000, 90.5000, 154.2500}
\definecolor{matlabblue1}{RGB}{108, 61.3, 94.6}
\definecolor{matlabblue2}{RGB}{54, 102.6, 142.3}
\newtheorem{theorem}{Theorem}
\newtheorem{lemma}[theorem]{Lemma}%
\newtheorem{assumption}[theorem]{Assumption}%
\newtheorem{example}{Example}%
\newtheorem{remark}{Remark}%
\newtheorem{definition}{Definition}%
\begin{document}

\title[Obstacle problem for linear conservation laws]{The obstacle problem for linear scalar conservation laws with constant velocity}

\author[4,5]{\fnm{Paulo} \sur{Amorim}}\email{paulo@im.ufrj.br}
\author[1]{\fnm{Alexander} \sur{Keimer}}\email{alexander.keimer@fau.de}
\author[2,3]{\fnm{Lukas} \sur{Pflug}}\email{lukas.pflug@fau.de}
\author[2]{\fnm{Jakob} \sur{Rodestock}}\email{jakob.w.rodestock@fau.de}

\affil[1]{\orgdiv{Department Mathematics}, \orgname{Friedrich-Alexander Universität Erlangen-Nürnberg (FAU)}, \orgaddress{\street{Cauerstr. 11}, \city{Erlangen}, \postcode{91058}, \country{Germany}}}

\affil[2]{\orgdiv{Competence Center for Scientific Computing (FAU CSC)}, \orgname{Friedrich-Alexander Universität Erlangen-Nürnberg (FAU)}, \orgaddress{\street{Martensstr. 5a}, \city{Erlangen}, \postcode{91058}, \country{Germany}}}

\affil[3]{\orgdiv{Chair of Applied Mathematics, Continuous Optimization, Department Mathematics}, \orgname{Friedrich-Alexander Universität Erlangen-Nürnberg (FAU)}, \orgaddress{\street{Cauerstrasse 11}, \city{Erlangen}, \postcode{91058}, \country{Germany}}}

\affil[4]{\orgdiv{School of Applied Mathematics (FGV-EMAp)}, \orgname{Fundação Getúlio Vargas}, \orgaddress{\street{Praia de Botafogo 190}, \city{Rio de Janeiro}, \postcode{22250-900}, \country{Brazil}}}

\affil[5]{\orgdiv{Instituto de Matemática}, \orgname{Universidade Federal do Rio de Janeiro (UFRJ)}, \orgaddress{\street{ Av. Athos da Silveira Ramos 149}, \city{Rio de Janeiro}, \postcode{C.P. 68530, 21941-909}, \country{Brazil}}}


\abstract{In this contribution, we present a novel approach for solving the obstacle problem for (linear) conservation laws. Usually, given a conservation law with an initial datum, the solution is uniquely determined.  How to incorporate obstacles, i.e., inequality constraints on the solution so that the resulting solution is still ``physically reasonable'' and obeys the obstacle, is unclear. The proposed approach involves scaling down the velocity of the conservation law when the solution approaches the obstacle. We demonstrate that this leads to a reasonable solution and show that, when scaling down is performed in a discontinuous fashion, we still obtain a suitable velocity - and the solution satisfying a discontinuous conservation law. We illustrate the developed solution concept using numerical approximations.}

\keywords{obstacle problem for conservation laws, obstacle problem, viscosity approximation, discontinuous conservation laws, solution constrained PDE}


\pacs[MSC Classification]{35L65,35L03,35L81,35N99}

\maketitle


\section{Introduction}

\subsection{The obstacle problem in the literature}
Hyperbolic conservation laws involving constraints on the solution (usually called \emph{obstacles}) arise naturally in many applications. For instance, in traffic modeling, it is natural to impose that vehicle density cannot exceed a certain threshold. 
Bounds on the velocity can be envisioned, given by speed limits, or on the vehicular flux in a certain region \cite{1berthelin2008model,1andreianov,1andreianov2,1chalons,1berthelin,denitti2024pointwise,1colombo,1dymski,1garavello,1garavello2,Bayen2022}. Elsewhere, in multi-phase flows modeled by hyperbolic equations, constraints may appear in the solutions, leading to obstacles. For related problems involving hyperbolic settings,  \cite{1levi,1rodrigues,1rodrigues2,1goudon,1saldanha,1berthelin-bouchut,Rossi2020,FernandezReal2020} provide a few examples. In addition, pedestrian flows may involve congestion,  so population density should be limited; see \cite{1bellomo2022towards} for a recent survey. \\
More generally, in cases where the solution of a partial differential equation represents the density of a medium, e.g., a population, fluid, group of vehicles, or group of pedestrians, one may consider a variety of situations where a constraint on the density is imposed in a certain region of space or time. Therefore, since, in many cases, such dynamics are described by parabolic or hyperbolic evolution equations, a modeling mechanism capable of modifying an existing problem is useful to introduce density constraints in a meaningful way. This is especially interesting in cases where the solutions of the ``free'' problem (i.e.,~without constraint) typically violate the constraint; then,  additional mechanisms that enforce the obstacle can be defined while remaining consistent with the underlying model.

The existing literature on obstacle problems is already substantial, especially concerning parabolic or elliptic problems. Since this is not the focus of our work, we cite only a few selected contributions, which are either foundational/surveys/monographs \cite{2lions,2kinder,2rodriguesbook,2mignot,2brezis,2rudd,2korte} or publications involving interesting applications \cite{2rodrigues,chalub,2bogelein}. Most approaches, especially in the parabolic setting, state the obstacle problem in a variational setting, where connecting the mathematical tools used to proper physical interpretations is not easy. Namely, a typical method to obtain a solution to an obstacle problem is by \emph{penalization}. For instance, when a function $u$ that must remain below some obstacle $\phi$ is sought, the evolution equation is supplemented by some source terms acting to decrease $u$ on the set where $u>\phi$ in the setting of some approximation scheme. In general, though, such terms may have no ready interpretation in the model setting and often represent mathematical artifacts reminiscent of more abstract problems intended to find appropriate projections on convex sets \cite{2brezis,2lions}.

For hyperbolic conservation laws, in particular, the penalization approach has been applied by a few authors, e.g.,~\cite{1levi,1berthelin-bouchut,1amorim}. One drawback is that a penalization term is, in general, not consistent with the conservation (or evolution) of the total mass. However, a constraint on a point-wise density should not necessarily affect the evolution of the total mass. In view of this, in the setting of an obstacle problem for a conservation law preserving mass, authors in \cite{1amorim} introduced a Lagrange multiplier (effectively a source term) designed to counterbalance the effect of the penalization on the total mass. However, the resulting formulation relies on mass creation and destruction to enforce the obstacle constraint while preserving the total mass. Here, we propose a different approach, which we argue is more natural.

Indeed, when trying to enforce an obstacle-like condition, density variations should originate from rearrangement of individuals rather than the destruction/creation of mass, especially in the case of traffic or pedestrian models. Thus, we expect that changes in the local \emph{velocity} of the vehicles or individuals can be enough to adjust the density to satisfy the constraint. For a comparison of the two approaches, see~\cref{paulo}.
\begin{figure}
    \centering
    \includegraphics[scale=0.6,clip,trim=0 22 12 0 0]{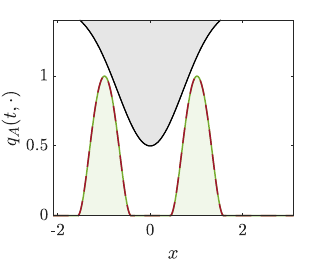}
    \includegraphics[scale=0.6,clip,trim=25 22 12 0 0]{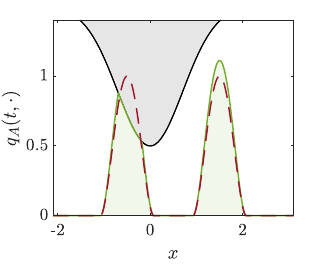}
    \includegraphics[scale=0.6,clip,trim=25 22 12 0 0]{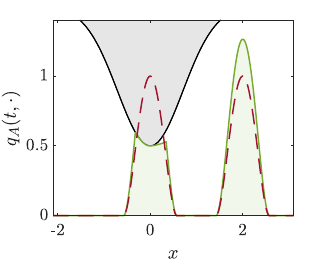}
    \includegraphics[scale=0.6,clip,trim=25 22 12 0 0]{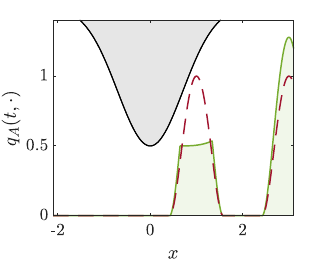}

    \includegraphics[scale=0.6,clip,trim=0 0 12 0 0]{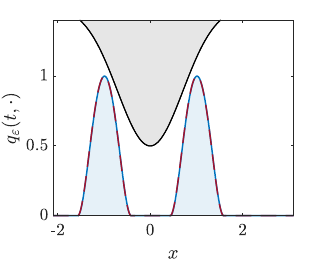}
    \includegraphics[scale=0.6,clip,trim=25 0 12 0 0]{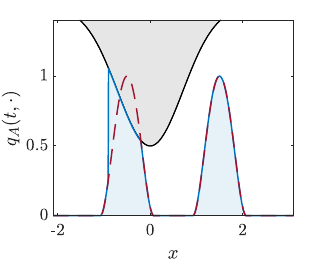}
    \includegraphics[scale=0.6,clip,trim=25 0 12 0 0]{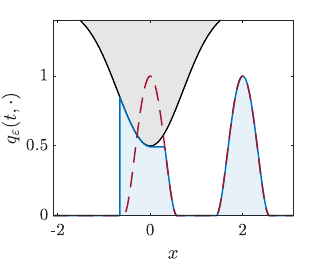}
    \includegraphics[scale=0.6,clip,trim=25 0 12 0 0]{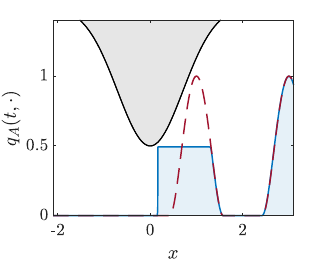}
\\
\caption{Solution \textcolor{matlabgreen}{$q_A$} proposed in~\cite{1amorim} with constant velocity of $1$ for times $t = 0, 0.5, 1, 2$ (top from left to right) and the \textcolor{matlabblue}{solution} suggested in~\cref{eq:conservation_law_smooth} for times $t = 0, 0.5, 1, 2$ (bottom from left to right) and $\eps = \tfrac{1}{1024}$ with identical initial datum $q_0$ and obstacle $o$ (black). The dashed line shows the function $q_0(\cdot-t)$.}
    \label{paulo}
\end{figure}

With these remarks in mind, we propose a new formulation for the obstacle problem for a one-dimensional hyperbolic conservation law. In this work, we consider the case of a linear flux only. As shown below, the linear case allows for a cleaner exposition, focusing on the main innovations of our approach.

\subsection{Problem statement and outline}

The obstacle problems for linear conservation laws with constant coefficients reads
\begin{align*}
    \partial_{t}q +\partial_{x}q&=0,\quad q(0,\cdot)\equiv q_{0}\ \text{ such that }\  q(t,x)\leq o(x)\ \forall (t,x)\in(0,T)\times\R
\end{align*}
for an obstacle \(o:\R\rightarrow\R\). As evident, the proposed system is over-determined in the sense that the Cauchy problem admits a unique solution without the obstacle. As a result, how to incorporate the obstacle into the solution is not straightforward when the following holds:
\begin{itemize}
    \item mass is conserved even when hitting the obstacle;
    \item the solution still satisfies a semi-group property in time;
    \item the solution behaves physically reasonably in the sense that mass is not instantaneously transported across space.
\end{itemize}
Having this in mind, one may need to adjust the velocity of the conservation law accordingly.
Following this approach, we want the velocity of the conservation law to decrease when the solution approaches the obstacle. Thus, for a smoothed version of the Heaviside function \(V_{\eps},\ \eps\in\R_{>0}\), we consider the following nonlinear conservation law:
\begin{align}
    \partial_{t}q(t,x)+ \partial_{x}\big(V_{\eps}(o-q)q\big)=0.\label{eq:conservation_law_smooth}
\end{align}
Here, the velocity becomes small when \(q\) approaches \(o\), and whenever \(q\) is not close to the obstacle, the dynamics evolve with the constant speed of (almost) \(1\) as expected for the conservation law without the obstacle.
However, the smoothness of \(V_{\eps}\) remains problematic, as we indeed want the dynamics to hit the obstacle and only slow down when hitting it. Thus, the singular limit \(\eps\rightarrow 0\) of the solution and the velocity \(V_{\eps}\) is of interest as well as what values \(V_{\eps}\) attains at this limit point, resulting in a discontinuous (in the solution) conservation law (compare, in particular, \cref{rem:discontinuous_conservation_law}),
\begin{equation}
  \partial_{t}q(t,x)+ \partial_{x}\big(H(o-q)q\big)=0,\label{eq:conservation_law_disc}
\end{equation}
where \(H\) denotes the Heaviside function. All the raised points are addressed in this manuscript, starting in \cref{sec:preliminaries} with the assumptions on the conservation law, the obstacle, and more (see \cref{as1}).
Additionally, as we deal with the viscous approximation of \cref{eq:conservation_law_smooth}, we look into its well-posedness, i.e.,\ existence and uniqueness of solutions.

In \cref{sec:comparison_principles}, we then show that the solutions to the viscosity approximation and \cref{eq:conservation_law_smooth} satisfy the obstacle constraint strictly as long as the initial datum is ``compatible'' and the density remains non-negative in the viscosity limit (for non-negative initial datum), uniformly in \(\eps\in\R_{>0}\).

As we are interested in the mentioned convergence for \(\eps\rightarrow 0\),\ cref{sec:Lipschitz} presents one-sided Lipschitz (OSL) bounds for the solution and the velocity, uniformly in \(\eps\in\R_{>0}\), from which we can conclude certain compactness properties of the solution and the velocity, culminating in the convergence result in \cref{alllim}. In addition, we show that even in the limit \(\eps\rightarrow 0\),\ the tuple \((q_{\eps},V_{\eps}(o-q_{\eps}))\) satisfies a (discontinuous) conservation law, which is indeed of the form stated in \cref{eq:conservation_law_disc}.

\Cref{sec:characterization_limit} then characterizes the limit solution and velocity in specific, more restrictive setups, illustrating the reasonableness of the approach and providing insights into the limiting velocity. Surprisingly, the velocity does not become zero when the density approaches the obstacle. This phenomenon may seem counter-intuitive at first glance, but assuming that the velocity is zero, no mass is transported when the obstacle is active, i.e.,  a ``full blocking'' results once the obstacle is active. 
We also showcase how a backward shock front originating from the point of contact of the obstacle and the solution propagate.

Next, \Cref{sec:motivation_optimization} continues to motivate the chosen approach for the obstacle problem, this time via optimization. On a formal level, we want to maximize the velocity of the conservation law at each point in time so the obstacle is not violated when forward propagating in time.  This leads to an optimization problem for all time \(t\in[0,T]\). We again see how the velocity behaves if the solution touches the obstacle, coinciding with the result in \cref{sec:characterization_limit}.

In \cref{sec:numerics}, we conclude the contribution with numerical approximations by means of a tailored Godunov scheme, demonstrating the reasonableness of the approach and showing the approximate solution for some specific cases for a better understanding and intuition.

\section{Preliminaries}\label{sec:preliminaries}
First, we define a sequence of functions that approximates the Heaviside function smoothly and later plays an important role in approximating a solution to the obstacle problem. These functions should have the following properties:
\begin{assumption}[Approximation of Heaviside function]\label{asH}
    To approximate the Heaviside function, we consider a sequence $V_\eps(x) := V\left(\tfrac{x}{\eps} \right)$ for all $x \in [0, \infty)$, $\eps > 0$. Here, $V \in C^\infty\big([0, \infty)\big)$ fulfills the following:
    \begin{itemize}
        \item $\lim_{x \rightarrow \infty} V(x) = 1$;
        \item $V(0) = 0$;
        \item $V\vert_{(0, \infty)} > 0$, $(-1)^{k+1}V^{(k)}\vert_{[0, \infty)} > 0$ for all $n \in \mathbb{N}$;
        \item $V^{(k)} \in L^\infty\big([0, \infty)\big)$ for all $k \in \mathbb{N}$, $\lVert V \rVert_{L^\infty\left([0, \infty)\right)} = 1$ and $\sup_{\eps > 0}\left \lVert V_\eps'\right \rVert_{L^\infty([\tilde{c}, \infty))} < \infty$ and all $\tilde{c}>0$;
        \item it holds that
        \[
        -\infty <v_{2, 1}^- \coloneqq \inf_{y \in \mathbb{R}_{\geq 0}} \tfrac{V''(y)}{V'(y)} \leq v_{2, 1}^+ \coloneqq \sup_{y \in \mathbb{R}_{\geq 0}} \tfrac{V''(y)}{V'(y)} < 0,
        \]
        and
        \item $\lim_{x \rightarrow \pm \infty} xV_\eps^{(k)}(x) = 0$ for all $k \in \mathbb{N}$.
    \end{itemize}
    From the first point, we see that $\lim_{\eps \searrow 0} V_\eps = \chi_{(0, \infty)}$ pointwise. The last point is not explicitly referenced again in this work. However, the assumption is not very restrictive and is used in the proof of~\cref{E}.
\end{assumption}
To demonstrate that the above requirements are reasonable, the following is a simple example of such function $V$.
\begin{example}[Example of approximation]\label{ex:V}
    We set $V:\mathbb{R} \rightarrow \mathbb{R}$, $V(x) := 1-\exp(-x)$ for $x \in \mathbb{R}$, where $V$ satisfies all the properties outlined in~\cref{asH}.
\end{example}
Now, as stated in the introduction, we want to investigate \cref{eq:conservation_law_smooth} and its limiting behavior for \(\eps\rightarrow0\) and \(T\in\R_{>0}\)
    with initial condition $q(0, \cdot) \equiv q_0$. To proceed, assumptions about the regularity of the data are necessary.
\begin{assumption}[Regularity]\label{as1}
    When working with~\cref{eq:conservation_law_smooth}, we assume the following:
    \begin{description}
        \item[Velocity field:] The velocity field \(V_{\eps}\) adheres to~\cref{asH}.
        \item[Initial datum:] \(q_0 \in BV(\mathbb{R};\R_{\geq 0})\)
        \item[Obstacle:] $o \in W^{6, \infty}(\mathbb{R})$ such that 
        \begin{description}
           \item[\hspace*{2cm} $\bullet$] \(o'\in W^{6,1}(\mathbb{R})\),
            \item[\hspace*{2cm} $\bullet$] $\underset{x \in \mathbb{R}}{\mathrm{essinf}}~ o(x) - q_0(x) > 0$, and
            \item[\hspace*{2cm} $\bullet$] $\lim_{x \rightarrow \pm \infty} o(x)$ both exist in $\mathbb{R}$.
        \end{description}
    \end{description}
\end{assumption}
Here $\mathrm{BV} \subseteq L^1$ denotes the space of functions with bounded variation and $\lvert \cdot \rvert_{\mathrm{TV}}$ will denote the total variation (semi-norm).\\
Existence and uniqueness to~\cref{eq:conservation_law_smooth} for \(\eps\in\R_{>0}\) can be found in the standard literature. We present the notion of solution (i.e.,\ weak entropy solution) and a precise statement about existence and uniqueness.

\begin{definition}[Entropy solution]\label{entropy}
    Let $q_0$ be as in~\cref{as1}. We say that $q \in C\big([0, T]; L^1_{loc}(\mathbb{R})\big)$ is an entropy solution to the Cauchy problem \cref{eq:conservation_law_smooth} with \(q(0, \cdot) \equiv q_0\) 
    if $q(0, \cdot) = q_0$ in $L^1_{loc}(\mathbb{R})$, and for any \((\kappa,\varphi)\in \mathbb{R}\times C_0^\infty\big((0, T) \times \mathbb{R}; \mathbb{R}_{\geq 0} \big)\), the following holds:
        \begin{align*}
        & \int^T_0 \!\!\!\int_{\mathbb{R}} | q(t, x) - \kappa | \partial_t \varphi(t, x) + \mathrm{sgn}(q(t, x) - \kappa)\big( f(x, q(t, x)) - f(x, \kappa)\big)\partial_x \varphi(t, x)   \dd x \dd t  \\
        & \quad - \int^T_0 \!\!\!\int_{\mathbb{R}} \mathrm{sgn}(q(t, x) - \kappa)\partial_1 f(x, \kappa)\varphi(t, x) \dd x \dd t \geq 0.
        \end{align*}
        where \(f(x,q) \coloneqq V_\eps(o(x)-q)q \ \forall (x,q) \in \left \lbrace (y, z) \in \mathbb{R}^2: o(y)\geq z\right \rbrace \).
\end{definition}
Given the definition of an entropy solution, we can state the existence and uniqueness of \cref{eq:conservation_law_smooth} as follows:    
    \begin{theorem}[Existence and uniqueness to~\cref{eq:conservation_law_smooth}]\label{theo:uniqueness_entropy_solution}
        There is a unique entropy solution $q_\eps \in C\big([0, T]; L^1_{loc}(\mathbb{R}) \big)$ to \cref{eq:conservation_law_smooth}  as defined in \cref{entropy}. Moreover, $0 \leq q_\eps \leq o$ a.e.\ in \((0,T)\times\R\).
    \end{theorem}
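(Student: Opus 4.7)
The strategy is standard vanishing viscosity combined with Kruzhkov's theory for conservation laws with $x$-dependent flux, followed by a comparison argument for the pointwise bounds. First, I would mollify the initial datum to $q_0^\delta\in C^\infty_c(\R;\R_{\geq 0})$ with $q_0^\delta\to q_0$ in $L^1_\loc$ and $|q_0^\delta|_{\mathrm{TV}}\leq |q_0|_{\mathrm{TV}}$, and (if necessary) smoothly extend $V_\eps$ beyond $[0,\infty)$ to globalize the flux. The parabolic problem
\begin{equation*}
\partial_t q^{\nu,\delta}+\partial_x f(x,q^{\nu,\delta})=\nu\,\partial_{xx}q^{\nu,\delta},\qquad q^{\nu,\delta}(0,\cdot)=q_0^\delta,
\end{equation*}
is then solvable classically, since $f(x,q)=V_\eps(o(x)-q)q$ is smooth in $(x,q)$ by $o\in W^{6,\infty}$ and $V_\eps\in C^\infty$. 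Uniform (in $\nu,\delta$) estimates --- $L^\infty$, $\mathrm{BV}$ in space, and $L^1_\loc$-Lipschitz in time --- follow by standard manipulations (Kato's inequality, differentiation of the equation in $x$ together with Gronwall, using the regularity of $o$). Helly's theorem then extracts a subsequence converging in $C([0,T];L^1_\loc(\R))$ to a limit $q_\eps$, and passing to the limit in the viscous Kruzhkov inequality yields entropy admissibility in the sense of \cref{entropy}.

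Second, for uniqueness, I would apply Kruzhkov's doubling-of-variables technique adapted to $x$-dependent fluxes. The extra term $-\mathrm{sgn}(q-\kappa)\,\partial_1 f(x,\kappa)\,\varphi$ in \cref{entropy} is precisely the correction that makes the standard doubling calculation close, giving $L^1$-contraction $\|q_1(t)-q_2(t)\|_{L^1}\leq\|q_{0,1}-q_{0,2}\|_{L^1}$ for any pair of entropy solutions, and in particular comparison: $q_{0,1}\leq q_{0,2}$ a.e.\ implies $q_1\leq q_2$ a.e.

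Third, to establish $0\leq q_\eps\leq o$, I would observe that both $q\equiv 0$ and $q\equiv o(\cdot)$ are classical (hence entropy) stationary solutions of \cref{eq:conservation_law_smooth}: at $q\equiv 0$ the flux is $V_\eps(o(x))\cdot 0=0$, and at $q\equiv o$ it is $V_\eps(0)\cdot o(x)=0$ since $V_\eps(0)=0$ by \cref{asH}. The hypothesis $\essinf_x(o-q_0)>0$ from \cref{as1} gives $0\leq q_0\leq o$ pointwise a.e., so comparison with these two stationary solutions yields the claim. I expect the main obstacle to lie in the uniform $\mathrm{BV}$ bound in the viscous step, since differentiating the equation in $x$ produces source terms involving $o'$, $o''$, and products with $V_\eps'$; controlling these uses the full regularity $o\in W^{6,\infty}$, $o'\in W^{6,1}$ from \cref{as1}. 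The smooth extension of $V_\eps$ is benign because the comparison argument ultimately confines $q_\eps$ to $\{0\leq q\leq o\}$, where the original flux is recovered.
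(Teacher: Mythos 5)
Your proposal is correct and follows essentially the same path as the paper's (very terse) proof: vanishing viscosity for existence (Kruzhkov §4), doubling of variables for uniqueness (Kruzhkov §3), and the observation that $q\equiv 0$ and $q\equiv o$ are stationary solutions so Kruzhkov's comparison theorem yields $0\leq q_\eps\leq o$. The only difference is ordering — the paper establishes the bounds before uniqueness (since the flux $f(x,q)=V_\eps(o(x)-q)q$ is only declared on $\{o(x)\geq q\}$), whereas you prove uniqueness first for an extended flux $\tilde V_\eps$ and then confine the solution to the original domain via comparison; both routes are logically fine and you correctly flag the extension as the point that makes them interchangeable.
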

    \begin{proof}
        Existence follows from~\cite[§4]{Kruzkov1970}. In addition, ~\cite[p.229, Theorem 3]{Kruzkov1970} implies directly that $0 \leq q_\eps \leq o$ a.e., as $0$ and $o$ are both solutions to~\cref{eq:conservation_law_smooth}. Using these bounds, uniqueness can be obtained owing to the doubling of variables technique~\cite[§3]{Kruzkov1970}. 
    \end{proof}

     To obtain quantitative estimates with regard to the \(\eps\) parameter, we approximate \cref{eq:conservation_law_smooth} with a viscosity to work with smooth solutions, as detailed in the following
    \begin{definition}[Viscosity approximation]\label{defi:viscosity}
        Let $\nu, \eps \in \mathbb{R}_{>0}$ and let~\cref{as1} hold. Moreover, we set  $q_{0, \nu} = \varphi_\nu * q_0$, where $*$ denotes the convolution~\cite[4.13]{alt2016eng} and $\varphi_\nu$ is given by
        \[
        \varphi_\nu(x) \coloneqq \tfrac{1}{\nu}\varphi\left( \tfrac{x}{\nu}\right),\ x\in\R,
        \]
        where $\varphi$ is the standard mollifier~\cite[4.2.1, Notation (ii)]{evans}.\\
        Then, we call the solution \(q_{\nu,\eps}: (0,T)\times \R \rightarrow \R\) to the initial value problem
        \begin{align}
        \partial_t q(t, x) + \partial_x \big(V_\eps(o(x) -q(t, x)\big) &= \nu\partial_x^2 \big(q(t, x) - o(x)\big), & (t, x) \in (0, T) \times \mathbb{R}  \label{eq:viscosity_PDE}\\
         q_{\nu, \eps}(0, \cdot) &\equiv q_{0, \nu}, &\text{ on } \R\label{vic}
    \end{align}
    the \emph{viscosity approximation} of~\cref{eq:conservation_law_smooth}.    
    \end{definition}
    The viscosity approximation approach has been laid out by, e.g.,\ Kružkov~\cite{Kruzkov1970}. However, it is not canonical to have $\nu o''$ on the right-hand side of a viscosity approximation. In fact, this term ensures the obstacle condition $q_{\nu, \eps} < o$ even for the viscosity approximation.

    In the following, we state properties of the viscosity approximation and start with the well-posedness of \crefrange{eq:viscosity_PDE}{vic}. For existence, we resort to parabolic theory.

    \begin{theorem}[Existence for the viscous equation~\cref{eq:viscosity_PDE}]\label{E}
        If $q_0$ and $o$ satisfy assumption~\cref{as1}, then there is a unique solution $q_{\nu, \eps} \in H^5\big((0, T) \times \mathbb{R}\big)$ to the problem~\cref{eq:viscosity_PDE}-\eqref{vic}.
    \end{theorem}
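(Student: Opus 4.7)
The plan is to treat \cref{eq:viscosity_PDE} as a semilinear parabolic equation with smooth data and invoke classical parabolic theory. Expanding the flux gives
\[
\partial_t q - \nu\partial_x^2 q \;=\; -V_\eps'(o-q)(o'-\partial_x q)\,q \;-\; V_\eps(o-q)\,\partial_x q \;-\; \nu\, o'',
\]
which is of the form $\partial_t q - \nu\partial_x^2 q = G(x,q,\partial_x q)$ with $G$ smooth in $(q,\partial_x q)$ (because $V \in C^\infty$) and smooth in $x$ (because $o \in W^{6,\infty}$, $o' \in W^{6,1}$). The mollified datum $q_{0,\nu} = \varphi_\nu * q_0$ is in $C^\infty_b(\R)$; combined with $0 \le q_0 \le o$ a.e.\ and the decay hypotheses on $o'$ and its derivatives, $q_{0,\nu}$ lies in every Sobolev space we will need.

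First I would establish local-in-time existence in, say, $C([0,T^*]; H^k(\R))$ for $k$ large by Banach fixed point: given $\bar q$ in the ball, solve the linear heat equation with right-hand side $G(x,\bar q,\partial_x\bar q) - \nu o''$ and show the map is a contraction on a small time interval, using smoothness of $V_\eps$ and $o$ together with standard maximal-regularity estimates for the heat semigroup on $\R$. The forcing $-\nu o''$ lies in $L^2 \cap L^\infty$ by interpolation between $o'' \in L^\infty$ and $o'' \in W^{5,1}$, with analogous control on its derivatives.

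Second, I would extend to $[0,T]$ using the $L^\infty$ bound $0 \le q_{\nu,\eps} \le o$ (the natural parabolic comparison principle with sub/supersolutions $0$ and $o$, which are respectively a sub- and supersolution of \cref{eq:viscosity_PDE} precisely because of the $\nu o''$ term on the right, exactly as noted after the definition). This confines the argument of $V_\eps$ to the compact set $[0,\|o\|_\infty]$, so $V_\eps$ and all its derivatives evaluated along the solution are uniformly bounded, preventing blow-up of the Sobolev norms via a Grönwall argument applied to the energy identities obtained by differentiating the equation in $x$ up to the required order. Bootstrapping via parabolic smoothing (each application exchanges two spatial for one temporal derivative, and $q_{0,\nu}$ is $C^\infty$) then yields $q_{\nu,\eps} \in H^5((0,T)\times \R)$; here the hypothesis $o' \in W^{6,1}$ together with $o \in W^{6,\infty}$ ensures that all spatial derivatives of the source term up to the relevant order are in $L^2(\R)$.

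For uniqueness, given two solutions $q_1, q_2$ in the above class, the difference $w = q_1 - q_2$ satisfies a linear parabolic equation with bounded coefficients (depending on $\partial_x q_i$, $q_i$, $o$, $o'$, which are all bounded in $L^\infty$ by the regularity just established), and $w(0,\cdot)=0$; a standard $L^2$ energy estimate and Grönwall close the argument. The main obstacle is the third step: controlling high-order spatial derivatives uniformly on the unbounded domain. This is where the tail assumption $\lim_{x\to\pm\infty} x V_\eps^{(k)}(x)=0$ from \cref{asH} and the $W^{6,1}$ decay of $o'$ become essential, because without them the nonlinear terms arising from differentiating $V_\eps(o-q)$ five times would not obviously lie in $L^2(\R)$ at the far field, where $q$ need not decay but only tends to limits dictated by the boundary behaviour of $o$ and $q_{0,\nu}$.
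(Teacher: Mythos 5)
Your overall plan --- local existence by Banach fixed point in a high-order Sobolev space, globalization via an $L^\infty$ a priori bound, bootstrap to $H^5$, and uniqueness by an $L^2$ energy estimate on the difference --- is the same program the paper carries out, by reference to Godlewski--Raviart (Chapter~II, Section~2) with modifications for the space-dependent flux. Two specific points in your writeup, however, do not survive contact with the problem.

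First, \cref{asH} defines $V_\eps$ only on $[0,\infty)$. In your fixed-point step, the iterates $\bar q$ need not satisfy $\bar q \le o$, so $V_\eps(o - \bar q)$ is simply undefined along the iteration, and $G(x,\bar q,\partial_x \bar q)$ is not a well-posed right-hand side. The paper's proof begins precisely by extending $V_\eps$ to $\tilde V_\eps \in C^\infty(\R)$ with $\tilde V_\eps \equiv 0$ on $(-\infty,-\ell]$ and $\|\tilde V_\eps\|_{L^\infty(\R)} = 1$, running the existence argument for the extended equation, and only afterwards invoking the maximum principle $q_{\nu,\eps} < o$ of \cref{mprin} to conclude that $\tilde V_\eps$ is ever only evaluated on $(0,\infty)$, so the extended solution solves the original problem. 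This extension is not cosmetic; without it the fixed-point map is not defined. Your proposal should state it explicitly.

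Second, your claim that $0$ is a subsolution of \cref{eq:viscosity_PDE} ``precisely because of the $\nu o''$ term'' is wrong, and the paper says so: plugging $q \equiv 0$ into \cref{eq:viscosity_PDE} gives $0 = -\nu o''$, which fails unless $o$ is concave, and the minimum principle (\cref{minpr}, with the explicit bound \cref{lowb}) shows the viscous solution genuinely dips \emph{below} $0$ for $t>0$; the bound $0 \le q_\eps \le o$ holds only for the entropy solution in the limit $\nu \searrow 0$. This slip happens not to break the argument, because all you actually need is $q_{\nu,\eps} \le o$ so that $o - q_{\nu,\eps} \ge 0$, and for fixed $\eps$ every derivative $V_\eps^{(k)}$ is already bounded on all of $[0,\infty)$ by \cref{asH}, not merely on a compact interval --- but the stated justification is incorrect, and conflating the viscous and entropy-solution bounds would cause real trouble elsewhere. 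One further caution: even the upper bound $q_{\nu,\eps} < o$ is not obtained in the paper by the ``natural parabolic comparison principle'' you invoke; since $o$ is an exact solution rather than a strict supersolution, the paper instead uses Milgrom's envelope theorem together with $V_\eps(0)=0$ and an ODE equilibrium argument to obtain strict inequality uniformly in $\nu$.
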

    \begin{proof}
        To prove this statement, we follow the steps in~\cite[Chapter II, Section 2]{godlewski1991} while extending the arguments to a space-dependent flux (as is our case).\\
        To apply the results in~\cite{godlewski1991}, we first extend $V_\eps$ to a function $\tilde{V}_\eps \in C^\infty(\mathbb{R})$ such that $\lVert \tilde{V}_\eps \rVert_{L^\infty(\mathbb{R})} = 1$ and $\tilde{V}_\eps\vert_{(-\infty, -\ell]} = 0$ for some $\ell > 0$ and for all $0< \eps < 1$. \\
        The result is $\tilde{V}_\eps$, and after obtaining the maximum principle~\cref{mprin}, which is proved for first the extension $\tilde{V}_\eps$ as well, we obtain the result for~\cref{eq:viscosity_PDE}. This is because $\tilde{V}_\eps$ will then only be evaluated on $[0, \infty)$.
    \end{proof}
    Thanks to classical embedding theorems, we also obtain the following
    \begin{remark}[Asymptotic behavior of $q_{\nu, \eps}$]\label{asyb}
        For $\nu,\eps \in \R_{>0}$, the solution $q_{\nu, \eps}$ to problem~\cref{eq:viscosity_PDE} belonging to $H^5\big((0, T) \times \mathbb{R}\big)$ implies that 
        \(
        \lim_{x \rightarrow \pm \infty} \partial_\alpha g(t, x) = 0
        \)
        for every $t \in [0, T]$ and every multi-index $\alpha$ up to at least order $4$. This can be found, e.g.,\ in Brézis~\cite[Corollary 8.9]{brezis2010functional}. We use this fact extensively in~\cref{sec:comparison_principles}.
    \end{remark}

Now, by embedding theorems, we have a \emph{classical} solution $q_{\nu, \eps} \in C^3\big([0, T] \times \mathbb{R}\big)$ of the viscous approximation. Thus, we can obtain bounds in $L^1$ as well as $\mathrm{TV}$:
\begin{theorem}[Estimates for $q_{\nu, \eps}$]\label{tvinx}
    For \(\varepsilon,\nu\in\R_{>0}\), the solution $q_{\nu, \eps} \in H^5\big((0, T) \times \mathbb{R}\big)$ to~\cref{eq:viscosity_PDE} satisfies
    \begin{description}
        
    \item[$L^1$ bound:]  \(    \| q_{\nu, \eps} \|_{L^\infty((0, T); L^1(\mathbb{R}))} \leq \big(T\nu \| o'' \|_{L^1(\mathbb{R})} + \| q_0 \|_{L^1(\mathbb{R})}\big) 
\)
\item[Spatial \(\mathrm{TV}\)-bound:] For all \(t\in[0,T]\), the following holds:
    \begin{align*}
    \lvert q_{\nu, \eps}(t, \cdot) - o \rvert_{\mathrm{TV}(\mathbb{R})} & \leq \left( \| o' \|_{L^1(\mathbb{R})} + \lvert q_0 \rvert_{\mathrm{TV}(\mathbb{R})} \right)T \| o'' \|_{L^1(\mathbb{R})} e^{T \| V_\eps' \|_{L^\infty\left([0, \infty)] \right)}\| o' \|_{L^\infty(\mathbb{R})} }.
    \end{align*}
\item[\(\mathrm{TV}\)-bound in time:] There exists $\bar \nu >0$ such that for all \(t\in[0,T]\), 
    \begin{align*}
    \int_{\R} \lvert \partial_t q_{\nu, \eps}(t, x) \rvert \dd x  & \leq \sup_{\eps > 0}\lVert V_\eps' \rVert_{L^\infty\left(\left[\hat{K}, \infty\right)\right)} \| q_0 \|_{L^\infty(\mathbb{R})}\big(\| o' \|_{L^1(\mathbb{R})}+ \lvert q_0 \rvert_{\mathrm{TV}(\mathbb{R})} \big) \\
    & \quad + \left( 1 + 2e^{-1}\right)\lvert q_0 \rvert_{\mathrm{TV}(\mathbb{R})} + \nu \| o'' \|_{L^1(\mathbb{R})}
    \end{align*}
    for all $0 < \nu < \bar \nu$, where \(\hat{K} \coloneqq \tfrac{\mathrm{essinf}_{y \in \mathbb{R}} o(y) -q_0(y)}{2}\).
    
\end{description}

\end{theorem}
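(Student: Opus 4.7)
The PDE~\cref{eq:viscosity_PDE}, which by \cref{E} and Sobolev embedding admits a classical $C^{3}$-solution, is smooth enough to be differentiated freely, and by \cref{asyb} all boundary contributions at $\pm\infty$ vanish in every integration by parts; these two facts will be used throughout. I would prove all three bounds by the same template: apply the appropriate differential operator to the PDE, multiply by a smooth convex approximation $\eta_\delta$ of $|\cdot|$ acting on the target quantity, integrate in $x$, and let $\delta\downarrow 0$. The restriction $\nu<\bar\nu$ enters only in the time-$\mathrm{TV}$ estimate, where it guarantees that the mollified datum $q_{0,\nu}$ stays below $o$ by at least $\hat K$, keeping the argument of $V_{\eps}'$ on the region $[\hat K,\infty)$ where its $L^\infty$-norm is uniform in $\eps$ by \cref{asH}.

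\textbf{$L^{1}$ and spatial $\mathrm{TV}$ bounds.} The $L^{1}$ bound is immediate: integrating~\cref{eq:viscosity_PDE} on $\R$, the flux integral vanishes by~\cref{asyb} and $\nu\int_\R\partial_x^{2}(q_{\nu,\eps}-o)\dd x=-\nu\int_\R o''\dd x$, so $\bigl|\tfrac{d}{dt}\!\int_\R q_{\nu,\eps}\bigr|\le\nu\|o''\|_{L^{1}(\R)}$ and integrating in time closes it. For the spatial $\mathrm{TV}$ bound I work with $w:=q_{\nu,\eps}-o$ and $s:=\partial_x w$; differentiating \cref{eq:viscosity_PDE} in $x$ recasts the equation into the conservation form
\begin{align*}
\partial_{t}s+\partial_{x}\bigl(A(w,x)\,s\bigr)+\partial_{x}B(w,x)=\nu\partial_{x}^{2}s,
\end{align*}
with $A=V_{\eps}(-w)-V_{\eps}'(-w)(w+o)$ and $B=V_{\eps}(-w)\,o'$. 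Testing with $\eta_\delta'(s)$, integrating on $\R$, and letting $\delta\downarrow 0$, the transport part cancels exactly against the cross term $\int_\R(\partial_{x}A)\,s\,\mathrm{sgn}_\delta(s)\,\dd x$, the viscosity contributes nonpositively, and one is left with $\tfrac{d}{dt}\|s\|_{L^{1}}\le \|V_{\eps}'\|_{L^{\infty}([0,\infty))}\|o'\|_{L^{\infty}}\|s\|_{L^{1}}+\|o''\|_{L^{1}}$. A Grönwall argument combined with $\|s(0,\cdot)\|_{L^{1}}\le\lvert q_{0}\rvert_{\mathrm{TV}}+\|o'\|_{L^{1}}$ (mollification does not increase $\mathrm{TV}$) now yields the claim.

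\textbf{Time-$\mathrm{TV}$ bound.} The crucial observation is that $r:=\partial_{t}q_{\nu,\eps}$ satisfies a \emph{linear} advection--diffusion equation in conservation form: since $\partial_{t}o=0$, differentiating \cref{eq:viscosity_PDE} in $t$ gives
\begin{align*}
\partial_{t}r+\partial_{x}\bigl(a(t,x)\,r\bigr)=\nu\partial_{x}^{2}r,\qquad a(t,x):=\partial_{q}\bigl[V_{\eps}(o-q)\,q\bigr]\big|_{q=q_{\nu,\eps}(t,x)}.
\end{align*}
The same $\eta_\delta$-manipulation now produces the \emph{true} $L^{1}$-contraction $\|r(t,\cdot)\|_{L^{1}}\le\|r(0,\cdot)\|_{L^{1}}$, with no Grönwall factor---precisely the form of the statement. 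The initial value is then read off from the PDE at $t=0$: the product-rule expansion of $-\partial_{x}[V_{\eps}(o-q_{0,\nu})q_{0,\nu}]$ splits into a contribution controlled by $\sup_{\eps>0}\|V_{\eps}'\|_{L^{\infty}([\hat K,\infty))}\|q_{0}\|_{L^{\infty}}(\|o'\|_{L^{1}}+\lvert q_{0}\rvert_{\mathrm{TV}})$ (valid for $\nu<\bar\nu$ since then $o-q_{0,\nu}\ge\hat K$) plus a piece bounded by $\|V_{\eps}\|_{L^{\infty}}\lvert q_{0}\rvert_{\mathrm{TV}}\le\lvert q_{0}\rvert_{\mathrm{TV}}$ coming from the $V_{\eps}\,\partial_{x}q_{0,\nu}$ factor. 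The viscous contribution is handled via $\partial_{x}^{2}q_{0,\nu}=\varphi_{\nu}'\ast q_{0}'$, so that $\nu\|\partial_{x}^{2}q_{0,\nu}\|_{L^{1}}\le\|\varphi'\|_{L^{1}}\lvert q_{0}\rvert_{\mathrm{TV}}$; the explicit value $\|\varphi'\|_{L^{1}}=2\mathrm{e}^{-1}$ for the standard mollifier produces the $(1+2\mathrm{e}^{-1})$ prefactor, and the residual $\nu\|o''\|_{L^{1}}$ comes from $-\nu\partial_{x}^{2}o$.

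\textbf{Main obstacle.} The one genuine subtlety is organizing each differentiated equation into true conservation form: the precise cancellation of the cross terms $(\partial_{x}A)\,s\,\mathrm{sgn}_\delta(s)$ and $(\partial_{x}a)\,r\,\mathrm{sgn}_\delta(r)$ against the integration-by-parts of the transport pieces is what keeps the Grönwall constants clean and, in particular, produces the exponential-free $L^{1}$-contraction for $r$ that the statement requires. Everything else is bookkeeping, with the role of $\bar\nu$ being precisely to preserve the distance $\hat K$ from the obstacle after mollification of the initial datum.
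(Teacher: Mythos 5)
The paper does not include a proof of this theorem, so your proposal cannot be matched line by line against a model argument. Judged on its own, the template you describe — differentiate the PDE, test with a smooth convex $\eta_\delta\to|\cdot|$, use Kato's inequality for the viscous term, and close by Gr\"onwall or $L^1$-contraction — is exactly the Kru\v{z}kov-style machinery the paper leans on elsewhere, and the spatial-TV and time-TV derivations are sound: the recast into conservation form with $A(w,x)$ and $B(w,x)$, the cancellation $\eta_\delta(s)-s\eta_\delta'(s)\to0$, and the genuinely linear equation for $r=\partial_t q_{\nu,\eps}$ giving $\|r(t)\|_{L^1}\le\|r(0)\|_{L^1}$ without a Gr\"onwall factor are all correct; so is the reading-off of $\|r(0)\|_{L^1}$ from the PDE at $t=0$, the role of $\bar\nu$ in keeping $o-q_{0,\nu}\ge\hat K$, and the origin of the $(1+2\mathrm e^{-1})$ factor via $\nu\|\partial_x^2 q_{0,\nu}\|_{L^1}\le\|\varphi'\|_{L^1}|q_0|_{\mathrm{TV}}$ (the precise numerical value $2\mathrm e^{-1}$ depends on the normalization constant in the standard mollifier, a detail the theorem statement shares).

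There is, however, one genuine gap: your $L^1$ paragraph does not prove the $L^1$ bound. Integrating \cref{eq:viscosity_PDE} over $\R$ and invoking the decay of \cref{asyb} annihilates both the flux term and the entire right-hand side $\nu\int_\R\partial_x^2(q_{\nu,\eps}-o)\dd x$ (including $\int_\R o''\dd x=0$, since $o'\in W^{6,1}$ forces $o'(\pm\infty)=0$), so what you have shown is mass conservation $\frac{\dd}{\dd t}\int_\R q_{\nu,\eps}\dd x=0$, not a bound on $\|q_{\nu,\eps}(t,\cdot)\|_{L^1(\R)}$. Since by \cref{minpr} the viscous solution can be negative for $\nu>0$, these are not the same. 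You must apply your own template here: test with $\eta_\delta'(q_{\nu,\eps})$ and let $\delta\downarrow0$. Then $f(x,0)=0$ makes $\mathrm{sgn}(q)\,\partial_x f(x,q)$ a total $x$-divergence of the Kru\v{z}kov entropy flux, so the flux contribution still vanishes after integration; the viscous term splits into $\nu\int\mathrm{sgn}(q)\partial_x^2 q\le0$ (Kato) and $-\nu\int\mathrm{sgn}(q)\,o''\le\nu\|o''\|_{L^1}$, yielding $\frac{\dd}{\dd t}\|q_{\nu,\eps}(t,\cdot)\|_{L^1}\le\nu\|o''\|_{L^1}$ and hence the stated bound. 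Finally, observe that your Gr\"onwall produces $\bigl(\|o'\|_{L^1}+|q_0|_{\mathrm{TV}}+T\|o''\|_{L^1}\bigr)e^{T\|V_\eps'\|_\infty\|o'\|_\infty}$, a \emph{sum} inside the parentheses, whereas the theorem displays a product of $\bigl(\|o'\|_{L^1}+|q_0|_{\mathrm{TV}}\bigr)$ with $T\|o''\|_{L^1}$; the product form cannot be right (it fails already at $t=0$), so your version is the correct reading and the theorem's display is almost certainly a typographical slip.
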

Even more, for monotonically decreasing \(o\), we even obtain uniform \(\mathrm{TV}\) bounds in \(\eps\in\R_{>0}\):
\begin{remark}[Uniform \(\mathrm{TV}\) bounds for monotone obstacles]
    Let \(o\) be decreasing and \(q_{\nu,\eps}\) be the solution as in \cref{eq:viscosity_PDE}.
    Then, in $\eps, \nu \in \mathbb{R}_{>0}$ and \(t \in (0, T)\) the following holds uniformly:
    \[
    \int_{\mathbb{R}} \lvert \partial_x q_{\nu, \eps}(t, x) -o'(x) \rvert ~\mathrm{d}x \leq \left( \| o' \|_{L^1(\mathbb{R})} + \lvert q_0 \rvert_{\mathrm{TV}(\mathbb{R})} \right)T \| o'' \|_{L^1(\mathbb{R})}.
    \]
    The case of increasing obstacles is trivial as the obstacle never becomes active.
\end{remark}

Now, we have collected the necessary tools to prove that $\lim_{\nu \rightarrow} q_{\nu,\varepsilon}$ is indeed an entropy solution to \cref{eq:conservation_law_smooth}:

\begin{theorem}[$q_{\nu,\eps}$ converges toward the entropy solution for \(\nu\rightarrow 0\)]\label{entropynu}
    Let $\eps,\nu \in\R_{>0}$ and \(q_{\eps,\nu}\) the viscosity solution in \cref{defi:viscosity}. Then, there exists \(q_{\eps}\in C\big([0,T];L^{1}_{\text{loc}}(\R)\big)\) so that
    \[
    \lim_{\nu\rightarrow 0}\|q_{\eps,\nu}-q_{\eps}\|_{C([0,T];L^{1}_{\text{loc}}(\R))}=0
    \]
    and \(q_\eps\) is the unique entropy solution to~\cref{eq:conservation_law_smooth} in the sense of~\cref{entropy}.  
\end{theorem}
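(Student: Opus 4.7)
My plan is to combine the uniform-in-$\nu$ estimates from \cref{tvinx} with a Helly-type compactness argument, then derive a viscous Kruzkov-type entropy inequality and pass to the limit $\nu\to 0$. The argument proceeds in three steps: extract a limit, verify that it satisfies the entropy inequality, and identify it via uniqueness.

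\textbf{Compactness.} For $0<\nu<\bar\nu$, \cref{tvinx} supplies uniform bounds on $\|q_{\nu,\eps}(t,\cdot)\|_{L^1(\R)}$, on $|q_{\nu,\eps}(t,\cdot)|_{\mathrm{TV}(\R)}$ (via the bound on $|q_{\nu,\eps}(t,\cdot)-o|_{\mathrm{TV}(\R)}$ and $o'\in L^1$), and on $\|\partial_t q_{\nu,\eps}\|_{L^\infty(0,T;L^1(\R))}$. Helly's selection theorem applied at each fixed time, combined with the uniform equicontinuity in time supplied by the last bound, yields via an Arzel\`a--Ascoli/Kolmogorov--Riesz argument a subsequence (not relabeled) and a limit $q_\eps \in C([0,T]; L^1_{\loc}(\R))$ such that $q_{\nu,\eps}\to q_\eps$ in $C([0,T]; L^1_{\loc}(\R))$. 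Since $q_{0,\nu}\to q_0$ in $L^1_{\loc}(\R)$ by standard mollifier properties, the initial datum is attained.

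\textbf{Viscous entropy inequality.} For fixed $\kappa\in\R$ and a test function $\varphi\in C_0^\infty((0,T)\times\R;\R_{\geq 0})$, I would multiply \eqref{eq:viscosity_PDE} by $\eta_\delta'(q_{\nu,\eps}-\kappa)\varphi$, where $\eta_\delta$ is a smooth convex approximation of $|\cdot-\kappa|$ with $\eta_\delta''\geq 0$. Two integrations by parts, using the identity $\eta_\delta'(q)\partial_x^2 q = \partial_x^2\eta_\delta(q) - \eta_\delta''(q)(\partial_x q)^2$, discarding the non-positive dissipation contribution, and sending $\delta\to 0$ produce
\begin{align*}
&\int_0^T\!\!\int_\R |q_{\nu,\eps}-\kappa|\partial_t\varphi + \sgn(q_{\nu,\eps}-\kappa)\bigl[f(x,q_{\nu,\eps})-f(x,\kappa)\bigr]\partial_x\varphi \dd x\dd t\\
&\quad - \int_0^T\!\!\int_\R \sgn(q_{\nu,\eps}-\kappa)\partial_1 f(x,\kappa)\varphi \dd x\dd t\\
&\quad + \nu\int_0^T\!\!\int_\R |q_{\nu,\eps}-\kappa|\partial_x^2\varphi \dd x\dd t - \nu\int_0^T\!\!\int_\R \sgn(q_{\nu,\eps}-\kappa)o''(x)\varphi \dd x\dd t \geq 0.
\end{align*}

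\textbf{Passage to the limit.} The first three integrals converge to the corresponding expressions with $q_\eps$ by dominated convergence, exploiting the strong $L^1_{\loc}$-convergence of $q_{\nu,\eps}$, the $L^\infty$-bound $0\leq q_{\nu,\eps}\leq o$, the smoothness of $V_\eps$ and $o$, and the compact support of $\varphi$. Both remaining viscous terms carry a prefactor $\nu$ and are uniformly bounded (the first via $\|q_{\nu,\eps}(t,\cdot)\|_{L^1(\supp\varphi)}$, the second via $o\in W^{6,\infty}(\R)$ and compact support of $\varphi$), so they vanish as $\nu\to 0$. The result is exactly the entropy condition of \cref{entropy}, so $q_\eps$ is an entropy solution. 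Uniqueness from \cref{theo:uniqueness_entropy_solution} then forces the entire sequence, rather than just a subsequence, to converge. The main technical obstacle is the compactness step: obtaining convergence in $C([0,T]; L^1_{\loc}(\R))$ rather than merely in $L^1_{\loc}((0,T)\times\R)$ requires the time-TV bound of \cref{tvinx}, which only holds for $0<\nu<\bar\nu$, and this uniform-in-time control is also what ensures that the initial datum is attained in the sense of \cref{entropy}.
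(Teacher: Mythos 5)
Your proposal takes essentially the same route as the paper's proof: compactness of $(q_{\nu,\eps})_\nu$ from the $L^1$, spatial-$\mathrm{TV}$, and temporal-$\mathrm{TV}$ estimates of \cref{tvinx} (the paper invokes the Bochner-space Riesz--Kolmogorov theorem of Simon, you phrase it as Helly plus Arzel\`a--Ascoli, but these are the same tool), passage to the limit in the viscous Kruzkov entropy inequality, and uniqueness from \cref{theo:uniqueness_entropy_solution} to upgrade subsequential to full-sequence convergence---you simply write out the Kruzkov-style details that the paper delegates to a citation. One minor imprecision: for the \emph{viscous} approximation you cannot quote $0\leq q_{\nu,\eps}\leq o$, since \cref{minpr} only gives $q_{\nu,\eps}\geq -C\nu$ (slight negativity is permitted); what you actually need, and what the paper invokes from \cref{sec:comparison_principles}, is the uniform-in-$\nu\in(0,1)$ $L^\infty$ bound, which does hold and suffices for the dominated-convergence step.
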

\begin{proof}
    The convergence of $q_{\nu,\eps}$ for $\nu \rightarrow 0$ on subsequences follows from~\cref{tvinx} and the Riesz--Kolmogorov-type Theorem for Bochner spaces~\cite[Theorem 3]{Simon1986CompactSI}. Showing that $q_\eps$ is an entropy solution and by the uniqueness of entropy solution in \cref{theo:uniqueness_entropy_solution}, we indeed obtain convergence on all subsequences. This a standard argument that can be found in~\cite[p.236]{Kruzkov1970}. For this, 
    $\| q_{\nu, \eps} \|_{L^\infty([0, T]\times\mathbb{R})}$ must be bounded uniformly in $0 < \nu < 1$. This is proven in~\cref{sec:comparison_principles}.
\end{proof}

\section{Comparison principles}\label{sec:comparison_principles}
~\cref{theo:uniqueness_entropy_solution} shows that $q_\eps$ respects the obstacle. However, we can even prove that the obstacle is never hit. This prevents the ``full blocking'', as mentioned in the introduction.
\begin{lemma}[Maximum principle/Obstacle is respected]\label{mprin}
    Let $q_{\nu, \eps}$ be a solution to~\cref{eq:viscosity_PDE} for $\nu, \eps >0$. Then,
    \begin{equation}\label{upb}
    q_{\nu,\eps}(t, x) < o(x),
    \qquad \forall (t, x) \in [0, T] \times \mathbb{R}.
    \end{equation}
\end{lemma}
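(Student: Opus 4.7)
My plan is to work with the difference function $\phi := o - q_{\nu,\eps}$ and prove $\phi > 0$ via a parabolic maximum-principle argument. The first step is to derive the PDE satisfied by $\phi$: substituting $q = o - \phi$ into~\cref{eq:viscosity_PDE} and using that $\nu\partial_x^2(q-o) = -\nu\partial_x^2\phi$ — precisely the cancellation that motivates including the extra $-\nu o''$ term on the right-hand side of the viscous equation — I obtain
\begin{equation*}
  \partial_t \phi - \nu \partial_x^2 \phi = \bigl[V_\eps'(\phi)(o-\phi) - V_\eps(\phi)\bigr]\partial_x \phi + V_\eps(\phi)\, o'.
\end{equation*}
Since $V_\eps(0) = 0$, I factor $V_\eps(\phi) = \phi\,\tilde V_\eps(\phi)$ with $\tilde V_\eps(s) := \int_{0}^{1} V_\eps'(rs)\dd r$, which is smooth and bounded thanks to~\cref{asH}. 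This recasts the identity as a \emph{linear} parabolic equation for $\phi$,
\begin{equation*}
  \partial_t \phi - \nu \partial_x^2 \phi + b(t,x)\,\partial_x \phi + c(t,x)\,\phi = 0,
\end{equation*}
with $b := -V_\eps'(\phi)(o-\phi) + V_\eps(\phi)$ and $c := -\tilde V_\eps(\phi)\,o'(x)$, both bounded uniformly in $(t,x)$ since $q_{\nu,\eps}\in C^3$ by~\cref{E} and $o \in W^{6,\infty}(\R)$.

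The second step is an exponential time-shift $\tilde\phi := e^{-\mu t}\phi$ with $\mu \geq \|c\|_{L^\infty}$, after which $\tilde\phi$ satisfies the same type of equation with zeroth-order coefficient $c+\mu \geq 0$. This is the standard normalization under which the parabolic strong maximum principle (e.g.,~\cite[Thm.~7.1.8]{evans}) applies directly to $-\tilde\phi$.

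The third step is to check the initial and asymptotic positivity. With $c_0 := \essinf_{x\in\R}\bigl(o(x)-q_0(x)\bigr)>0$ from~\cref{as1} and the Lipschitz continuity of $o$ (from $o\in W^{6,\infty}$), convolution with the standard mollifier gives
\[
q_{0,\nu}(x) \leq (\varphi_\nu\ast o)(x) - c_0 \leq o(x) + \nu\|o'\|_{L^\infty(\R)} - c_0,
\]
so $\phi(0,\cdot)\geq c_0/2 > 0$ for $\nu$ sufficiently small relative to $c_0/\|o'\|_{L^\infty}$. At spatial infinity, $q_{\nu,\eps}(t,\cdot)$ vanishes by~\cref{asyb}, while $o(x)\geq c_0>0$ everywhere (by continuity of $o$ together with $q_0\geq 0$ and the essinf condition). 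Hence $\phi(t,x)\to o(\pm\infty)\geq c_0>0$ as $|x|\to\infty$, so $\phi$ stays bounded away from zero outside a large ball uniformly in $t\in[0,T]$.

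Finally, I argue by contradiction: assume $t^* := \inf\{t\in(0,T]\,:\,\inf_x\tilde\phi(t,\cdot)\leq 0\}<\infty$. Continuity of $\tilde\phi$ together with the asymptotic positivity forces $\tilde\phi(t^*,x^*)=0$ at some finite $x^*\in\R$. Applying the strong parabolic maximum principle to $-\tilde\phi$ on the cylinder $(0,t^*]\times(-R,R)$, with $R$ large enough that $-\tilde\phi<0$ on the lateral boundary $\{x=\pm R\}$, yields $\tilde\phi\equiv 0$ on the parabolic past of $(t^*,x^*)$, contradicting $\phi(0,\cdot)>0$. The main obstacle will be rigorously handling the unbounded spatial domain — which is resolved by reducing to a bounded cylinder via the asymptotic bound from~\cref{asyb} — and ensuring that the mollification step does not destroy the initial strict separation between $q_0$ and $o$, which restricts $\nu$ to be small but does not affect the qualitative conclusion $q_{\nu,\eps}<o$.
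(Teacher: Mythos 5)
Your proposal is correct in outline and takes a genuinely different route from the paper. The paper tracks the scalar quantity $m(t) := \sup_{y}\bigl(q_{\nu,\eps}(t,y)-o(y)\bigr)$, differentiates it using a Danskin/envelope-type lemma (Milgrom--Segal), derives the differential inequality $m'(t)\leq \|o'\|_{L^\infty}|\tilde V_\eps(-m(t))|$, and closes by an \emph{ODE comparison}: since $\tilde V_\eps(0)=0$, the comparison ODE has $0$ as an equilibrium, so $\tilde m(t)<0$ for all $t$ once $\tilde m(0)<0$. You instead rewrite the PDE for $\phi=o-q_{\nu,\eps}$ as a \emph{linear} uniformly parabolic equation (by factoring $V_\eps(\phi)=\phi\,\tilde V_\eps(\phi)$ and viewing the coefficients as frozen), normalize the zeroth-order coefficient to be nonnegative via $\tilde\phi=e^{-\mu t}\phi$, and invoke the \emph{strong parabolic maximum principle} together with the spatial decay of $q_{\nu,\eps}$ to rule out an interior touching point. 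Both arguments work, but they trade off differently: your route is the textbook one and arguably cleaner, yet it only produces the qualitative conclusion $\phi>0$; the paper's ODE-comparison route produces a \emph{$\nu$-independent} strictly negative bound $q_{\nu,\eps}-o\leq \tilde m(t)<0$, which is exactly what the paper exploits in the subsequent remark to pass to the limit $\nu\searrow 0$ and retain the strict inequality $q_\eps<o$ for the entropy solution. With the strong maximum principle alone you would have to work harder (e.g., extract a $\nu$-uniform Harnack-type lower bound) to reproduce that remark.

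Two smaller points you should flag. First, the factorization $\tilde V_\eps(s)=\int_0^1 V_\eps'(rs)\,\dd r$ evaluates $V_\eps'$ at negative arguments when $s<0$, but \cref{asH} only defines $V_\eps$ on $[0,\infty)$; you therefore need the same smooth extension $\tilde V_\eps\in C^\infty(\R)$ that the paper introduces at the start of the proof of~\cref{E}, followed by the a posteriori observation that, once $\phi>0$ is established, the extension is never actually used. Second, you explicitly restrict $\nu\lesssim c_0/\|o'\|_{L^\infty}$ to preserve the strict separation under mollification. The paper's proof, which asserts $\tilde m(0)=\sup_y(q_{0,\nu}(y)-o(y))<0$ without comment, implicitly needs the same smallness; so this is an honest (shared) hypothesis, not a defect of your argument.
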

\begin{proof}
    First, extend $V_\eps$ to $\tilde{V}_\eps \in C^\infty(\mathbb{R})$ as discussed in the proof of~\cref{E}. Let $q_{\nu, \eps}$ be the solution to
    \begin{align}
        \partial_t q(t, x) + \partial_x \big(\tilde{V}_\eps(o(x) -q(t, x)\big) &= \nu\partial_x^2 \big(q(t, x) - o(x)\big), & (t, x) \in (0, T) \times \mathbb{R}  \label{eq:ext1}\\
         q_{\nu, \eps}(0, \cdot) &\equiv q_{0, \nu}, &\text{ on } \R\label{eq:ext2}.
    \end{align}
    Not only will we obtain the maximum principle for $q_{\nu, \eps}$, but the maximum principle also implies that $q_{\nu, \eps}$ solves~\cref{eq:viscosity_PDE,vic}:\\
    Fix $t \in (0, T]$. In the end, we want to use \cite[Theorem 1]{Milgrom2002}. Here, $(0, T) \mapsto \sup_{y \in \mathbb{R}} q_{\nu, \eps}(t, y) - o(y)$ must be differentiable a.e. Fortunately, this is guaranteed by Lipschitz continuity of all involved functions. Moreover, $\lim_{y \rightarrow \pm \infty} q_{\nu, \eps}(t, y) -o(y)$ has to exist, and $\lim_{y \rightarrow \pm \infty} \partial_t\big(q_{\nu, \eps}(t, y) -o(y)\big) = 0$ has to hold, both of which can be seen by invoking~\cref{as1} and~\cref{asyb}.\\
    In the spirit of~\cite[Theorem 1]{Milgrom2002}, set
    \[
    X(t) \coloneqq \left \lbrace x \in \mathbb{R} \cup \lbrace -\infty, \infty \rbrace: q_{\nu, \eps}(t, x) - o(x) = \sup_{y \in \mathbb{R}} q_{\nu, \eps}(t, y) - o(x) \right \rbrace \neq \emptyset
    \]
    for $t \in [0, T]$. Set $m(t) \coloneqq \sup_{y \in \mathbb{R}} q_{\nu, \eps}(t, y) - o(y)$ for $t \in [0, T]$ and choose any $x \in X(t)$.\\
    Then, \textbf{as a first case}, if $x \in \lbrace - \infty, \infty\rbrace$, we have
    \begin{align}
    \lim_{y \rightarrow \infty} \partial_t \big( q_{\nu, \eps}(t, y) -o(y)\big) = \lim_{y \rightarrow \infty} \partial_t q_{\nu, \eps}(t, y) = 0 = \partial_t \big( q_{\nu, \eps}(t, x) - o(x)\big). \label{eq:dt_q-o_infty}
    \end{align}
    In the \textbf{second case}, if $x \in \mathbb{R}$, then, according to standard optimality conditions~\cite[Chapter 2]{nocedal2006numerical}, we obtain
    \begin{equation}\partial_x q_{\nu, \eps}(t, x) - o'(x) = 0\qquad \wedge\qquad\partial_x^2 q_{\nu, \eps}(t, x) - o''(x) \leq 0\label{eq:optimality_obstacle}\end{equation}
    Then, we can differentiate as follows:
    \begin{align*}
    \partial_t \big(q_{\nu, \eps}(t, x) -o(x) \big) &= \partial_t q_{\nu, \eps}(t, x) \\
    &\stackrel{\eqref{eq:conservation_law_smooth}}{=}-\tilde{V}_\eps'(o(x) -q_{\nu, \eps}(t, x)) \underbrace{(o'(x) -\partial_x q_{\nu, \eps}(t, x))}_{=0} q_{\nu, \eps}(t, x) \\
    &\qquad   - \tilde{V}_\eps(\underbrace{o(x) -q_{\nu, \eps}(t, x)}_{=-m(t)}) \underbrace{\partial_xq_{\nu, \eps}(t, x)}_{=o'(x)} + \nu \underbrace{(\partial_x^2 q_{\nu, \eps}(t, x) -o''(x))}_{\leq 0} \\
    \intertext{Using \cref{eq:optimality_obstacle},}
    & \leq \left\lvert \tilde{V}_\eps\big(-m(t)\big) \right \rvert \| o' \|_{L^\infty(\mathbb{R})}
    \end{align*}
    Now,  $q_{\nu, \eps}-o \in W^{2, \infty}\big([0, T] \times \mathbb{R}\big)$, and
    \(
    \lim_{x \rightarrow \pm \infty} q_{\nu, \eps}(t, x) - o(x)
    \)
    exists for all $t \in [0, T]$ because of~\cref{as1} and~\cref{asyb}. Similarly, 
    \[
    \lim_{x \rightarrow \pm \infty} \partial_t\big( q_{\nu, \eps}(t,x) - o(x) \big) = 0\ \forall t\in[0,T].
    \]
    Altogether, \cite[Theorem 1]{Milgrom2002} is applicable, and combining both cases for a.e.\ $t \in (0, T)$ and all $x\in X(t)$ yields
    \begin{align*}
        \tfrac{\mathrm{d}}{\mathrm{d}t} \sup_{y \in \mathbb{R}} q_{\nu, \eps}(t, y) - o(y)  = m'(t)  \leq  \partial_t\big(q_{\nu, \eps}(t, x) - o(x)\big) = \| o' \|_{L^\infty(\mathbb{R})} \left 
        \lvert \tilde{V}_\eps\big( -m(t)\big) \right \rvert
    \end{align*}
    Now, consider the Cauchy problem for the corresponding differential equation (instead of the inequality)
    \begin{equation*}
    \tilde{m}'(t) = \| o' \|_{L^\infty(\mathbb{R})}\left \lvert \tilde{V}_\eps(-\tilde m(t))\right \rvert, \quad \tilde m(0) = \sup_{y\in \mathbb{R}} q_{0, \nu}(y) - o(y) 
    \end{equation*}
    on $[0, T]$ that uniquely defines $\tilde m$ on $[0, T]$ because $\tilde{V}_\eps$ is globally Lipschitz (see~\cref{asH}). Since $\tilde{V}_\eps(0) = 0$ and $-\tilde m(0) > 0$,  $-\tilde m(t) > 0 $ for all $t \in [0, T]$ because equilibria are not allowed to be crossed. We also note that \emph{$\tilde{m}$ does not depend on $\nu$!} Then, the ODE comparison principle (see, e.g., \cite[Lemma 1.2, p.24]{teschlbook}) implies for $(t, y) \in [0, T] \times \mathbb{R}$,
    \[
    q_{\nu, \eps}(t, y) - o(y) \leq m(t) \leq \tilde m(t) < 0
    \]
    This yields the claim.
\end{proof}
So, in fact, the obstacle is not even hit at all for \(\eps\in\R_{>0}\), which also implies that the mass is transported with a positive velocity.

The same argument also allows for the proof of a minimum principle:
\begin{theorem}[Minimum principle]\label{minpr}
    Let $q_{\nu, \eps}$ be a solution to~\cref{eq:viscosity_PDE} for $\nu, \eps >0$. Then,
    \begin{equation}\label{lowb}
    q_{\nu, \eps}(t, x) \geq -\tfrac{\nu\| o'' \|_{L^\infty(\mathbb{R})} }{\| o' \|_{L^\infty(\mathbb{R})} \tilde{K}}\left(-1+ \exp\left(\tilde{K}\| o' \|_{L^\infty(\mathbb{R})} t\right)\right)
    \end{equation}
    for all $(t, x) \in [0, T] \times \mathbb{R}$, where
    \(
    c_0 \coloneqq \inf_{x \in \mathbb{R}} o(x) > 0,~\tilde{K} := \lVert V_\eps' \rVert_{L^\infty\left(\left[c_0, \infty \right) \right)}.
    \)
    We recall~\cref{asH} to know that $\tilde{K}$ is finite.   
\end{theorem}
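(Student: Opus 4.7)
The plan is to mirror the argument used for the maximum principle in~\cref{mprin}, replacing the supremum of $q_{\nu,\eps}-o$ by the infimum of $q_{\nu,\eps}$. First, I would introduce $n(t) \coloneqq \inf_{y \in \R} q_{\nu,\eps}(t,y)$ and the minimizer set $Y(t) \coloneqq \{x \in \R \cup \{-\infty,\infty\} : q_{\nu,\eps}(t,x) = n(t)\}$, and verify the hypotheses of~\cite[Theorem 1]{Milgrom2002} for this infimum: differentiability a.e.\ of $n$ (from the Lipschitz regularity of $q_{\nu,\eps}$ in~\cref{E}) and existence of the limits $\lim_{y \to \pm\infty} q_{\nu,\eps}(t,y)$ and $\lim_{y\to\pm\infty}\partial_t q_{\nu,\eps}(t,y) = 0$ (from~\cref{as1,asyb}).

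Next, at a.e.\ $t \in (0,T)$ and any $x \in Y(t)$, I would split into two cases. For $x \in \{\pm\infty\}$, $\partial_t q_{\nu,\eps}(t,x)=0$ exactly as in~\eqref{eq:dt_q-o_infty}. For $x \in \R$, the first-order optimality conditions $\partial_x q_{\nu,\eps}(t,x)=0$ and $\partial_x^2 q_{\nu,\eps}(t,x)\geq 0$ together with~\cref{eq:viscosity_PDE} give
\[
\partial_t q_{\nu,\eps}(t,x) \;\geq\; -V_\eps'\!\big(o(x)-n(t)\big)\,o'(x)\,n(t) \;-\; \nu\|o''\|_{L^\infty(\R)}.
\]
The crucial observation, and the only point where~\cref{asH} is used nontrivially, is that while $n(t) < 0$ one has $o(x)-n(t) > o(x) \geq c_0$, so that $V_\eps'(o(x)-n(t))\leq \tilde K$; this is the reason the bound depends on $c_0$. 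Estimating $|o'(x)|\leq \|o'\|_{L^\infty(\R)}$ and $|n(t)|=-n(t)$ then yields the linear differential inequality
\[
n'(t) \;\geq\; \tilde K\|o'\|_{L^\infty(\R)}\,n(t) \;-\; \nu\|o''\|_{L^\infty(\R)}
\]
on the set $\{n(t)<0\}$, the ``$\geq$'' coming from the infimum version of~\cite[Theorem 1]{Milgrom2002}.

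Finally, since $q_{0,\nu} = \varphi_\nu * q_0 \geq 0$ one has $n(0)\geq 0$, so comparing $n$ to the explicit solution
\[
\tilde n(t) \;=\; -\tfrac{\nu\|o''\|_{L^\infty(\R)}}{\tilde K\|o'\|_{L^\infty(\R)}}\big(e^{\tilde K\|o'\|_{L^\infty(\R)}\,t}-1\big)
\]
of the corresponding ODE with $\tilde n(0)=0$ via~\cite[Lemma 1.2, p.24]{teschlbook} yields exactly~\cref{lowb}. The main obstacle I anticipate is the piecewise nature of the differential inequality (valid only where $n(t)<0$); this should be handled cleanly because $\tilde n(t)\leq 0$ for all $t\geq 0$, so on the complementary set $\{n(t)\geq 0\}$ the bound $n(t)\geq \tilde n(t)$ is automatic and the two regimes glue consistently.
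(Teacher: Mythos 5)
Your proof is correct and is precisely the argument the paper has in mind: the paper's own proof of the minimum principle merely says "similar to the maximum principle, again based on~\cite[Theorem 1]{Milgrom2002}," and you have filled in exactly that argument — tracking $n(t) = \inf_y q_{\nu,\eps}(t,y)$ instead of $\sup_y\big(q_{\nu,\eps}(t,y)-o(y)\big)$, using the optimality conditions $\partial_x q_{\nu,\eps}=0$, $\partial_x^2 q_{\nu,\eps}\ge 0$ and the observation that $o(x)-n(t)\ge c_0$ whenever $n(t)<0$ so that $V_\eps'\le\tilde K$, arriving at the linear ODE comparison that yields~\eqref{lowb}. The gluing remark about $\{n<0\}$ versus $\{n\ge 0\}$ is handled correctly since $\tilde n\le 0$, and the nonnegativity of $q_{0,\nu}=\varphi_\nu*q_0$ follows from $q_0\ge 0$ and $\varphi_\nu\ge 0$.
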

\begin{proof}
The proof is similar to the previous one that shows that the obstacle is respected.
It is again based on \cite[Theorem 1]{Milgrom2002}.
\end{proof}
Of note, the solution can be negative for \(\nu,\eps\in\R_{>0},\); however, in the limit, it will satisfy the expected lower bound of \(0\).

This also allows us to state bounds for the entropy solution:
\begin{remark}[Bounds on entropy solution]
    Let $\eps > 0$. Because $L^1_{loc}$-convergence implies point-wise convergence a.e.\ of a subsequence, the entropy solution $q_\eps\in L^1_{loc}(\mathbb{R})$ from~\cref{entropynu} fulfills the bounds
    \[
    0 \leq q_\eps(t, x) < o(x)\quad   \text{for a.e. } (t, x) \in (0, T) \times \mathbb{R}.
    \]
This can be obtained by letting $\nu \searrow 0$ in~\cref{lowb} and~\cref{upb} point-wise a.e.. Here, we note that an upper bound $\tilde{m}$ exists on $q_{\nu, \eps}-o$ such that $\tilde{m}<0$ and does not depend on $\nu$, as stated in the proof of~\cref{mprin}. This allows for the strict inequality $q_\eps < o$ to hold pointwise a.e.
\end{remark}
 
\section{OSL bounds for \texorpdfstring{$q_{\nu, \eps}$}{q}\label{s4}
 and \texorpdfstring{$V_\eps(o - q_{\nu, \eps})$}{V(o-q)} uniformly  \texorpdfstring{in \(\nu,\eps\)}{}}\label{sec:Lipschitz}

As stated earlier,  the condition $o' \leq 0$ gives uniform $\mathrm{TV}$-bounds of $q_{\nu, \eps}$ in $\nu$ and $\eps$. We now want to find OSL bounds that provide a more general compactness result in $\nu$, $\eps > 0$. We need the following:
\begin{lemma}[OSL bounds and compactness]\label{tvo}
    Let $\Omega\subseteq \mathbb{R}$ be open and $(f_\mu)_{\mu>0} \subseteq C^1( \Omega )$ be a sequence of functions such that
    \[
    C_1 \coloneqq \sup_{\mu > 0} \| f_\mu \|_{L^\infty(\Omega)} < \infty \text{  and  } C_2 \coloneqq \sup_{\mu > 0} \sup_{x \in \Omega}  f_\mu'(x) < \infty.
    \]
    Then, $(f_\mu)_{\mu >0} \subseteq BV(V)$ for every compact $V \subseteq \Omega$, and there exist $f \in L^1_{\text{loc}}(\Omega)$ and a subsequence \((\mu_{k})_{k\in\N}\text{ fulfilling } \lim_{k\rightarrow\infty} \mu_{k}=0\) such that
    \begin{align*}
        \lim_{k\rightarrow \infty} \| f_{\mu_k}- f \|_{L^1_{\text{loc}}(\Omega)} = 0.
    \end{align*}
\end{lemma}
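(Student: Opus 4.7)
The plan is to reduce compactness in $L^1_{\text{loc}}(\Omega)$ to the classical BV compactness by using the one-sided derivative bound, together with the uniform $L^\infty$ bound, to control the full total variation on compact intervals.

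First I would observe that on any compact interval $[a,b]\subseteq\Omega$, a uniform TV bound follows from the elementary identity $|y|=2\max(y,0)-y$. Since $f_\mu\in C^1(\Omega)$ with $f_\mu'\leq C_2$, one gets $\max(f_\mu'(x),0)\leq C_2$, and hence
\[
|f_\mu|_{\mathrm{TV}([a,b])}
=\int_a^b |f_\mu'(x)|\dd x
\leq 2C_2(b-a)-\bigl(f_\mu(b)-f_\mu(a)\bigr)
\leq 2C_2(b-a)+2C_1.
\]
In particular, $(f_\mu)_{\mu>0}$ is bounded in $BV([a,b])$ uniformly in $\mu$. Covering any compact $V\subseteq\Omega$ by finitely many such intervals (using that $V$ is closed, bounded, and contained in the open set $\Omega$, so it admits a compact neighbourhood $[a,b]\subseteq\Omega$) yields the first claim $(f_\mu)\subseteq BV(V)$ with a uniform bound.

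Next I would invoke the compact embedding $BV([a,b])\hookrightarrow\hookrightarrow L^1([a,b])$ (Helly's selection theorem / the classical Rellich--Kondrachov type result in one dimension) to extract, for each fixed compact interval, a subsequence converging in $L^1$. To upgrade this to $L^1_{\text{loc}}(\Omega)$ convergence, I would exhaust $\Omega$ by a countable nested family of compact intervals $(K_n)_{n\in\N}$ with $K_n\subseteq K_{n+1}\subseteq\Omega$ and $\bigcup_n K_n=\Omega$ (connected components of $\Omega$ are open intervals, and a standard enumeration handles the countably-many-components case). Applying the single-interval compactness on $K_1$, then on $K_2$ to a further subsequence, and so on, a Cantor diagonal selection produces a single subsequence $(f_{\mu_k})$ converging in $L^1(K_n)$ for every $n$. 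The limits on the $K_n$ are compatible, so they glue to a function $f\in L^1_{\text{loc}}(\Omega)$, and $\|f_{\mu_k}-f\|_{L^1(K)}\to 0$ for every compact $K\subseteq\Omega$ since any such $K$ is contained in some $K_n$.

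The only non-routine step is the TV estimate: everything else is standard BV compactness plus a diagonal argument. The key trick is the identity $|y|=2\max(y,0)-y$, which converts a one-sided pointwise bound on $f_\mu'$ together with a telescoping boundary term (controlled by $C_1$) into a two-sided integral bound on $|f_\mu'|$; without the $L^\infty$ bound this would fail, and without the OSL bound the interval length would not suffice.
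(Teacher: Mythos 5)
Your proposal is correct and follows essentially the same route as the paper: your identity $|y|=2\max(y,0)-y$ is precisely the positive/negative-part decomposition $|f_\mu'| = 2(f_\mu')^+ - f_\mu'$ the paper uses, giving the same TV bound $2C_2|K|+2C_1$, after which both proofs invoke classical BV compactness in $L^1_{\text{loc}}$. The only cosmetic difference is that you spell out the exhaustion-and-diagonal step (and handle disconnected $\Omega$ a bit more carefully), whereas the paper delegates the $L^1_{\text{loc}}$ convergence to a cited theorem.
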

\begin{proof}
    First, we prove that $| f_\mu |_{\mathrm{TV}(V)}$ is finite and does not depend on $\mu$ for every compact $V \subseteq \overline{\Omega}$.
    Let $V \subseteq \Omega$ be compact, and a compact interval $K \supseteq V$. Set $f_\mu^+ \coloneqq \max \lbrace f_\mu, 0 \rbrace$ and $f_\mu^- \coloneqq - \lbrace f_\mu, 0 \rbrace$ for any $\mu > 0$.
    \begin{align*}
        \int_V | f_\mu'(x) |~\mathrm{d}x & \leq \int_K | f_\mu'(x)| ~\mathrm{d}x = \int_K  f_\mu'( x)^+ + f_\mu'( x)^-~\mathrm{d}x  \\
        & = 2\int_K  f_\mu'( x)^+ ~\mathrm{d}x - \int_K  f_\mu( x)'~\mathrm{d}x \\
        & = 2 \int_K \underbrace{ f_\mu'( x)^+}_{\leq C_2}~\mathrm{d}x - f_\mu(\sup K) + f_\mu(\inf K)  \leq 2 | K | C_2 + 2 C_1 
    \end{align*}
    Consequently, \(
    | f_\mu |_{\mathrm{TV}(V)} \leq 2| K | C_2 + 2C_1\),
    and per \cite[Theorem 13.35]{leoni2009first}, the classical compactness result for functions of bounded variation gives convergence to $f$ in $L^1_{\text{loc}}(\Omega)$ along a subsequence. 
\end{proof}
Note that the result from~\cref{tvo} also holds if $\inf_{\mu> 0} \inf_{x \in \Omega} f'_\mu(x) >- \infty$ instead of $\sup_{\mu > 0} \sup_{x \in \Omega}  f_\mu'(x) < \infty$. To obtain this, replace $f$ with $-f$ and use~\cref{tvo}.\\
Unfortunately, linear conservation laws have no intrinsic ``smoothing'' because of the lack of Oleinik-type estimates. However, as we later want to pass to the limit nevertheless, we impose an OSL condition on the initial datum, which is preserved over time (the same cannot easily be obtained with a \(\mathrm{TV}\) argument).    
\begin{assumption}[Initial datum revisited]\label{as2}
In addition to \cref{as1}, we assume for the remainder of~\cref{s4} that the initial datum is OSL continuous from below, i.e., for a \(C\in\R\), the following holds:
\begin{equation}\label{oslq0}
\inf_{(x,y)\in\R^{2}}\tfrac{q_{0}(x)-q_{0}(y)}{x-y}\geq C.
\end{equation}
\end{assumption}
In the following, we derive OSL bounds for $q_{\nu,\eps}$ uniform in $\nu,\eps$:
\begin{theorem}[OSL condition for $q_{\nu, \eps}$ in  space] \label{oslq}
    There exists $\nu(\eps) \in \R_{>0}$ with $\lim_{\eps \searrow 0}\nu(\eps) = 0$ such that
    $\forall \nu \in \R_{>0}$  with $0 \leq \nu \leq \nu(\eps)$, and it holds \(\forall (t, x) \in [0, T] \times \mathbb{R}\) that
\[
    \partial_x q_{\nu, \eps}(t, x) -o'(x) > \min \big\lbrace -2 \| o' \|_{L^\infty(\mathbb{R})}, \essinf_{y\in \R}\big(q_0'(y) - o'(y)\big) \big\rbrace - T \| o'' \|_{L^\infty(\mathbb{R})},
    \]
      where the right-hand side term is bounded thanks to \cref{as2}.
      
    Consequently, with $o' \in L^\infty(\mathbb{R})$---see \cref{as1}---$\forall \nu \in \R_{>0}$ with  $0 \leq \nu \leq \nu(\eps)$ and  $\forall (t, x) \in [0, T] \times \mathbb{R}$,
    \[
    \partial_x q_{\nu, \eps}(t, x) > \min \lbrace -2 \| o' \|_{L^\infty(\mathbb{R})}, \essinf_{y\in \R}\big(q_0'(y) - o'(y)\big) \rbrace - T \| o'' \|_{L^\infty(\mathbb{R})} - \| o' \|_{L^\infty(\mathbb{R})}.
    \]
\end{theorem}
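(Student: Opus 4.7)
\medskip
\noindent\textbf{Proof proposal.} The plan is to imitate the argument of \cref{mprin}, but applied to $w \coloneqq \partial_x q_{\nu,\eps} - o'$ instead of to $q_{\nu,\eps} - o$, tracking its running infimum in $x$ via \cite[Theorem~1]{Milgrom2002} as before. First I would differentiate the viscous equation \cref{eq:viscosity_PDE} once in $x$; writing the convective term as $V_\eps'(o-q_{\nu,\eps})q_{\nu,\eps}(o'-\partial_x q_{\nu,\eps}) + V_\eps(o-q_{\nu,\eps})\partial_x q_{\nu,\eps}$, differentiating once more, and substituting $\partial_x q_{\nu,\eps} = w + o'$ together with $\partial_x^2 q_{\nu,\eps} = \partial_x w + o''$ yields
\begin{align*}
\partial_t w + \bigl[V_\eps(o-q_{\nu,\eps}) - V_\eps'(o-q_{\nu,\eps})q_{\nu,\eps}\bigr]\partial_x w - \nu\,\partial_x^{2} w
&= -V_\eps''(o-q_{\nu,\eps})\,w^{2}\,q_{\nu,\eps} \\
&\quad + 2V_\eps'(o-q_{\nu,\eps})\,w\,(w+o') \\
&\quad - V_\eps(o-q_{\nu,\eps})\,o''.
\end{align*}
Thanks to \cref{E} and \cref{asyb}, $w(t,\cdot)$ is smooth with $w(t,x) \to 0$ and $\partial_t w(t,x) \to 0$ as $|x|\to\infty$, so $m(t) \coloneqq \inf_{x\in\R} w(t,x)$ is attained and Lipschitz in $t$; \cite[Theorem~1]{Milgrom2002} then gives $m'(t) = \partial_t w(t, x^{*}(t))$ for a.e.\ $t\in[0,T]$, where $x^{*}(t) \in \R \cup \{\pm\infty\}$ is any minimizer.

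Next I would insert the minimum-point relations $\partial_x w(t,x^{*}) = 0$ and $\partial_x^{2} w(t,x^{*}) \geq 0$ into the above evolution equation and discard the non-negative viscous contribution, obtaining
\begin{equation*}
m'(t) \;\geq\; -V_\eps''(o-q_{\nu,\eps})\,m(t)^{2}\,q_{\nu,\eps} \;+\; 2V_\eps'(o-q_{\nu,\eps})\,m(t)\,\bigl(m(t) + o'(x^{*})\bigr) \;-\; V_\eps(o-q_{\nu,\eps})\,o''(x^{*}).
\end{equation*}
Set $b \coloneqq \min\bigl\{-2\|o'\|_{L^\infty(\R)},\,\essinf_{y\in\R}(q_0'(y)-o'(y))\bigr\}\leq 0$, which is finite by \cref{as2}, with $m(0) \geq b$. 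If $m(t) > -2\|o'\|_{L^\infty(\R)}$, the target lower bound $b - T\|o''\|_{L^\infty(\R)}$ is automatic (since $b\leq -2\|o'\|_{L^\infty(\R)}$ and $-T\|o''\|_{L^\infty(\R)}\leq 0$). Otherwise $m(t) \leq -2\|o'\|_{L^\infty(\R)}$, and then $m(t) + o'(x^{*}) \leq -\|o'\|_{L^\infty(\R)} \leq 0$, which makes the product $m(t)(m(t)+o'(x^{*})) \geq 0$ and hence the middle term non-negative thanks to $V_\eps' > 0$ from \cref{asH}; the last term is bounded below by $-\|o''\|_{L^\infty(\R)}$ because $0 \leq V_\eps \leq 1$. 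If the first term were also non-negative, a standard ODE comparison (e.g.\ \cite[Lemma~1.2, p.~24]{teschlbook}) against the linear lower solution $\varphi(t) = b - t\|o''\|_{L^\infty(\R)}$ would yield the claim directly.

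The main obstacle is therefore the sign of $-V_\eps''(o-q_{\nu,\eps})\,m^{2}\,q_{\nu,\eps}$: \cref{asH} forces $V_\eps'' \leq 0$, so this term is non-negative whenever $q_{\nu,\eps}\geq 0$, but the minimum principle \cref{minpr} only yields $q_{\nu,\eps} \geq -C(\eps)\,\nu$ for some $\eps$-dependent $C(\eps)$. Combining this with the $\mathrm{TV}$-bound of \cref{tvinx}, which provides an $\eps$-dependent a priori bound on $|w|$, the possibly negative contribution is controlled by $\|V_\eps''\|_{L^\infty(\R_{\geq 0})}\,|w|^{2}\,C(\eps)\,\nu$ and can be made arbitrarily small by choosing $\nu \leq \nu(\eps)$ for a sufficiently small threshold with $\lim_{\eps\searrow 0}\nu(\eps) = 0$; the resulting infinitesimal slack is absorbed by the strictness of the claimed inequality. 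The boundary case $x^{*}(t)\in\{\pm\infty\}$ is immediate, because $m(t) = 0 > b - T\|o''\|_{L^\infty(\R)}$ there. Combining both cases and integrating via the ODE comparison produces $m(t) > b - t\|o''\|_{L^\infty(\R)}$ for all $t\in[0,T]$, which is the first assertion of the theorem; the second then follows by subtracting $o'(x)$ and using $|o'(x)| \leq \|o'\|_{L^\infty(\R)}$.
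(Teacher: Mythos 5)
Your setup, derivation of the evolution equation for $w = \partial_x q_{\nu,\eps} - o'$, invocation of \cite[Theorem~1]{Milgrom2002} with the two boundary cases, and the endgame of discarding the viscous term and comparing against the linear lower ODE are all exactly what the paper does (your coefficient $2V_\eps' m(m+o')$ is in fact right; the paper has a harmless factor-of-2 slip there). The gap is in how you control the sign-indefinite term $-V_\eps''(o-q_{\nu,\eps})\,m^2\,q_{\nu,\eps}$ when $q_{\nu,\eps}<0$.

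Two problems. First, the ``$\eps$-dependent a priori $L^\infty$ bound on $|w|$ from the TV-bound of \cref{tvinx}'' does not exist: \cref{tvinx} controls $\lvert q_{\nu,\eps}(t,\cdot)-o\rvert_{\mathrm{TV}} = \int_\R |w(t,\cdot)|$, which is an $L^1$ quantity, and no $L^\infty$ bound on $\partial_x q_{\nu,\eps}$ uniform in $\nu$ is available at this stage. Second, even granting such a bound, your error term is $\|V_\eps''\|_{L^\infty}\,m(t)^2\,C(\eps)\nu$, which grows quadratically in $m(t)$ — precisely the quantity you are trying to bound below. Feeding this into the differential inequality $m' \geq -\delta\,m^2 - \|o''\|_{L^\infty}$ and comparing against the Riccati-type lower solution does not recover the claimed linear bound $b - t\|o''\|_{L^\infty}$; the slack is not ``infinitesimal'' and cannot simply be absorbed by strictness.

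The paper's argument avoids both issues, and it is exactly what the ratio assumption $v_{2,1}^- = \inf_{y\geq 0} V''(y)/V'(y) > -\infty$ in \cref{asH} is for. Write $q_{\nu,\eps} = q^+_{\nu,\eps} - q^-_{\nu,\eps}$; the piece $-V_\eps'' m^2 q^+_{\nu,\eps}\geq 0$ may be dropped. For the piece $V_\eps'' m^2 q^-_{\nu,\eps}$, instead of using $\|V_\eps''\|_{L^\infty}$, one uses the scaling identity $V_\eps'' = \eps^{-1}(V''/V')(\cdot/\eps)\,V_\eps' \geq \eps^{-1} v_{2,1}^- V_\eps'$ to \emph{factor out $V_\eps'$ and compare the bad term against the good term $2V_\eps' m^2$} rather than throwing the latter away. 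This gives a combined contribution $V_\eps' m^2\big(2 + v_{2,1}^- q^-_{\nu,\eps}/\eps\big)$. The minimum principle \cref{minpr} then supplies $\nu(\eps)$ such that $q^-_{\nu,\eps} < -\eps/v_{2,1}^-$ for $0<\nu\leq\nu(\eps)$ (this pins down $\nu(\eps)$, which is otherwise left implicit in your proposal), making $2 + v_{2,1}^- q^-_{\nu,\eps}/\eps \geq 1$. Combined with $m o'(x^*) \geq \|o'\|_{L^\infty}\,m(t)$ (valid since $m\leq 0$), one arrives at $m'(t)\geq V_\eps'(m^2 + \|o'\|_{L^\infty} m) - \|o''\|_{L^\infty}$, and the case split $m\gtrless -2\|o'\|_{L^\infty}$ gives the linear bound by integration, as you proposed. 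In short: keep the $2V_\eps' m^2$ term and trade $V_\eps''$ for $V_\eps'$ via $v_{2,1}^-$, rather than trying to make the $V_\eps''$ contribution absolutely small.
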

Here, $\essinf_{y\in \R}\big(q_0'(y) - o'(y)\big)$ is interpreted as the lower OSL bound of $q_0 - o$. 
\begin{proof}
    As before, we apply~\cite[Theorem 1]{Milgrom2002}. Let
    \[
    Z(t) \coloneqq \left \lbrace x \in \mathbb{R} \cup \lbrace -\infty, \infty \rbrace: \partial_x q_{\nu, \eps}(t, x) - o'(x) = \inf_{y \in \mathbb{R}} \big(\partial_y q_{\nu, \eps}(t, y) - o'(y) \big)\right \rbrace
    \]
    and $m(t) \coloneqq \big(\inf_{y \in \mathbb{R}} \partial_y q_{\nu, \eps}(t, y) - o'(y)\big)$ for all $t  \in [0,T]$. Choose $x \in Z(t)$. We distinguish two cases:\\
    The \textbf{first case} is $x \in \lbrace -\infty, \infty \rbrace \cap Z(t)$. Here, 
    \[
    \partial_t \big( \partial_x q_{\nu, \eps}(t, x) - o'(x) \big) = 0
    \]
    since \cref{eq:dt_q-o_infty} and $o' \in H^1(\mathbb{R})$ as in \cref{as1}. \\
    The \textbf{second case} is $x \in \mathbb{R} \cap Z(t)$. Once again, optimality conditions lead to
    \begin{equation}
    \partial_x^2 q_{\nu, \eps}(t,x) - o''(x) = 0\ \text{ and }\ \partial_x^3 q_{\nu, \eps}(t, x) - o'''(x) \geq 0\label{eq:optimality_42},
    \end{equation}
     and we then obtain
    \begin{align*}
        &\partial_t \big( \partial_x q_{\nu, \eps}(t, x) - o'(x)\big)\notag\\
        & = \partial_{xt}^{2} q_{\nu, \eps}(t, x) \notag. \\
        \intertext{Plugging in the strong form \cref{eq:viscosity_PDE}, we have }
        & = \partial_x \big( -V_\eps'(o(x) -q_{\nu,\eps}(t, x)) (o'(x) -\partial_xq_{\nu, \eps}(t, x))q_{\nu, \eps}(t, x) \notag \\
        & \quad - V_\eps(o(x) -q_{\nu, \eps}(t, x)) \partial_xq_{\nu, \eps}(t, x) + \nu(\partial_x^2 q_{\nu, \eps}(t, x) -o''(x))\big) \notag \\
        & = -V_\eps''(o(x) -q_{\nu, \eps}(t, x))\underbrace{(o'(x) -\partial_xq_{\nu, \eps}(t, x))^2}_{=m(t)^2} q_{\nu, \eps}(t, x) \notag \\
        & \quad - V_\eps'(o(x) -q_{\nu, \eps}(t, x)) \underbrace{(o''(x) -\partial_x^2 q_{\nu, \eps}(t, x))}_{=0} q_{\nu, \eps}(t, x) \notag\\
        & \quad - 2V_\eps'(o(x) -q_{\nu, \eps}(t, x)) \underbrace{(o'(x) -\partial_x q_{\nu, \eps}(t, x))}_{=-m(t)}\underbrace{\partial_x q_{\nu, \eps}(t, x)}_{m(t)+o'(x)} \notag \\
        & \quad  -V_\eps(o(x) -q_{\nu, \eps}(t, x)) \underbrace{\partial_x^2 q_{\nu, \eps}(t, x)}_{=o''(x)} + \underbrace{\nu(\partial_x^3 q_{\nu, \eps}(t, x) -o'''(x) )}_{\geq 0} \notag \\
        & \stackrel{\eqref{eq:optimality_42}}{\geq} -V_\eps''(o(x) -q_{\nu, \eps}(t, x))m(t)^2 q_{\nu, \eps}(t, x)  + V_\eps'\big(o(x) - q_{\nu, \eps}(t, x)\big)\big( 2m(t)^2 + m(t) o'(x) \big) \notag \\
        & \quad - V_\eps(o(x) -q_{\nu, \eps}(t, x)) o''(x).  \notag
        \intertext{We write $q_{\nu, \eps} = q_{\nu, \eps}^+ - q_{\nu, \eps}^-$ for the positive and negative parts, respectively, and incorporate $-V_\eps''m^2 q_{\nu, \eps}^+ \geq 0$:}
        &  V_\eps''(o(x) -q_{\nu, \eps}(t, x))m(t)^2 q_{\nu, \eps}(t, x)^-  + V_\eps'\big(o(x) - q_{\nu, \eps}(t, x)\big)\big( 2m(t)^2 + m(t) o'(x) \big) \notag \\
        & \quad - V_\eps(o(x) -q_{\nu, \eps}(t, x)) o''(x)
        \intertext{Per~\cref{as1}, $ V''_\eps(x) = \tfrac{1}{\eps^2}\tfrac{V''\left(\tfrac{x}{\eps}\right)}{V'\left(\tfrac{x}{\eps}\right)} V'\left(\tfrac{x}{\eps}\right) \geq \tfrac{1}{\eps}\underbrace{v_{2, 1}^-}_{<0}V_\eps'(x)$ holds for $x \in \mathbb{R}$, and we estimate as follows}
        & \geq V_\eps'\big(o(x) -q_{\nu, \eps}(t,x)\big) \left( m(t)^2\tfrac{v_{2, 1}^-q_{\nu, \eps}(t, x)}{\eps} + 2m(t)^2 +m(t) o'(x)\right) \\
        & \quad - \| V_\eps \|_{L^\infty(\mathbb{R})}\| o'' \|_{L^\infty(\mathbb{R})}. 
        \intertext{According to~\cref{lowb}, there exists $\nu(\eps)$ s.t. $q_{\nu, \eps}^- <  \tfrac{\varepsilon}{v_{2, 1}^-}$ for all $0 < \nu < \nu(\eps)$. Because $\lim_{y \rightarrow \pm \infty} \partial_yq_{\nu,\eps}(t, y)-o'(y)=0$, $m(t) \leq 0$ holds, so we can continue the estimate as}
        & \geq V_\eps'\big(o(x)-q_{\nu, \eps}(t, x)\big) \big( m(t)^2 + \| o' \|_{L^\infty(\mathbb{R})} m(t)\big) - \| o'' \|_{L^\infty(\mathbb{R})} \| V_\eps \|_{L^\infty(\mathbb{R})}.
    \end{align*}
    Applying~\cite[Theorem 1]{Milgrom2002} and combining both cases, we obtain, for a.e.\ $t \in (0, T)$,
    \begin{align*}
        m'(t) & \geq \min \!\big \lbrace V_\eps'\big(o(x)-q_{\nu, \eps}(t, x)\big) \big( m(t)^2 \!+ \! \| o' \|_{L^\infty(\mathbb{R})} m(t)\big) - \| o'' \|_{L^\infty(\mathbb{R})} \| V_\eps \|_{L^\infty(\mathbb{R})}, 0  \big \rbrace. 
    \end{align*}    Then, fix any $t \in [0, T]$. Now, if $m(t) \geq- 2 \| o' \|_{L^{\infty}(\R)}$, we have a desirable lower bound on $m$.\\
    However, if $m(t) < -2 \| o' \|_{L^\infty(\mathbb{R})}$, then $m(t)^2 + \lVert o' \rVert_{L^\infty(\mathbb{R})} m(t)\geq 0$. So,
    \begin{equation}\notag
    m'(t) \geq - \| o'' \|_{L^\infty(\mathbb{R})}\| V_\eps \|_{L^\infty(\mathbb{R})} =  - \| o'' \|_{L^\infty(\mathbb{R})}
    \end{equation}
    holds, which implies, after integration,
    \begin{align*}
    m(t) & \geq \min \lbrace m(0) , -2 \| o' \|_{L^\infty(\mathbb{R})} \rbrace - t \| o'' \|_{L^\infty(\mathbb{R})},
    \end{align*}
    which is the claimed inequality. Here, we remind the reader that $m(0) > - \infty$ is assumed in~\cref{oslq0}.
\end{proof}

A similar result can be derived for the velocity $V_\eps(o-q_{\nu, \eps})$.
 \begin{theorem}(OSL condition for $V_\eps(o-q_{\nu,\eps})$ in space)\label{oslV}
    Let \(\eps,\nu\in\R_{>0}\), $q_{\eps, \nu}$ be a solution to~\cref{eq:viscosity_PDE}. Then, for fixed $\eps$, there exists $\nu(\eps) > 0$ with $\lim_{\eps \searrow 0} \nu(\eps) = 0$ such that
    \begin{align*}
    & \partial_x \left( V_\eps\big(o(x) - q_{\nu, \eps}(t, x)\big)\right)\\
    &\leq \max \left \lbrace C_1, C_2 \sup_{\eps > 0} \left \lVert V_\eps' \right \rVert_{L^\infty\left([k_0, \infty) \right)},\esssup_{y\in \R}(o'(y) - q_0'(y)) \sup_{\eps > 0} \lVert V_\eps' \rVert_{L^\infty\left([c_0, \infty) \right) } \right \rbrace
    \end{align*}
    for all  $0 <\nu \leq \nu(\eps)$, $(t, x) \in (0, T) \times \R$, where
    \begin{align}
        k_0 & \coloneqq \tfrac{\inf_{y \in \mathbb{R}}o(y)}{2} \label{const42} \\
        c_0 & \coloneqq \essinf_{y\in \R} (o(y)-q_0(y)) \\
        v & \coloneqq \left \lVert \tfrac{V''}{V'} \right \rVert_{L^\infty(\mathbb{R})} \\
        C_1 &\coloneqq \sqrt{\tfrac{(v+2\lVert V' \rVert_{L^\infty\left([0, \infty)] \right)})^2\lVert o' \rVert^2_{L^\infty(\mathbb{R})}}{\big(v_{2, 1}^+\inf_{y \in \mathbb{R}}o(y)\big)^2}+\tfrac{2\lVert V' \rVert_{L^\infty\left( [0, \infty)\right)} (-\lVert o'' \rVert_{L^\infty(\mathbb{R})}-1)}{v_{2, 1}^+\inf_{y \in \mathbb{R}} o(y)}} \\
        & \quad +\tfrac{2(v+2\lVert V' \rVert_{L^\infty\left([0, \infty) \right)})\| o' \|_{L^\infty(\mathbb{R})}}{v_{2, 1}^+\inf_{y \in \mathbb{R}} o(y)} \label{const1} \\
        C_2 & \coloneqq T\| o''\|_{L^\infty(\mathbb{R})} - \min\big\lbrace-2\| o' \|_{L^\infty(\mathbb{R})}, \essinf_{y\in \R}\big(q_0'(y) - o'(y)\big)\big\rbrace \label{const3}
    \end{align}
\end{theorem}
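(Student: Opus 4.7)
The plan is to follow the same envelope-theorem template that produced \cref{oslq}, but applied now to the function
\[
u_{\nu,\eps}(t,x) := \partial_x\bigl[V_\eps(o(x)-q_{\nu,\eps}(t,x))\bigr] = V_\eps'\bigl(o(x)-q_{\nu,\eps}(t,x)\bigr)\bigl(o'(x)-\partial_x q_{\nu,\eps}(t,x)\bigr).
\]
First I would set $M(t) := \sup_{x\in\R} u_{\nu,\eps}(t,x)$ and $X(t) := \{x\in \R\cup\{\pm\infty\} : u_{\nu,\eps}(t,x)=M(t)\}$. The $H^5$-regularity of $q_{\nu,\eps}$ from \cref{E} together with the decay-at-infinity statement in \cref{asyb} and \cref{as1} supply all the technical prerequisites needed to invoke \cite[Theorem~1]{Milgrom2002}: $M$ is absolutely continuous on $[0,T]$, $u_{\nu,\eps}(t,\pm\infty)=0$, and $X(t)\neq\emptyset$. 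When $x\in X(t)\cap\{\pm\infty\}$ one reads off $\partial_t u_{\nu,\eps}(t,x)=0$, exactly as in the previous proof.

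The finite case is where the work lies. Writing $G := V_\eps(o-q_{\nu,\eps})$, the optimality conditions for $x\in X(t)\cap \R$ become $\partial_x^2 G(t,x)=0$ and $\partial_x^3 G(t,x)\leq 0$. The first identity rewrites as
\[
o''(x)-\partial_x^2 q_{\nu,\eps}(t,x) = -\tfrac{V_\eps''(o-q_{\nu,\eps})}{V_\eps'(o-q_{\nu,\eps})}\bigl(o'(x)-\partial_x q_{\nu,\eps}(t,x)\bigr)^2,
\]
which by \cref{asH} has a usable sign because $v_{2,1}^+<0$. Next I would compute
\(
\partial_t u_{\nu,\eps} = -V_\eps''(o-q_{\nu,\eps})(o'-\partial_x q_{\nu,\eps})\,\partial_t q_{\nu,\eps} - V_\eps'(o-q_{\nu,\eps})\,\partial_x\partial_t q_{\nu,\eps},
\)
substitute the strong form \cref{eq:viscosity_PDE} for $\partial_t q_{\nu,\eps}$, and use both optimality relations to eliminate $\partial_x^2 q_{\nu,\eps}$ and to sign-control the $\nu\,\partial_x^3 q_{\nu,\eps}$-terms, exactly analogous to the manipulation performed in the proof of \cref{oslq}. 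After collecting the surviving contributions, one expects a differential inequality of the schematic form
\[
M'(t) \leq -\alpha(\eps)\,M(t)^2 + \beta(\eps)\,M(t) + \gamma(\eps),
\]
with $\alpha(\eps)>0$ coming from $\tfrac{-v_{2,1}^+ \inf_y o(y)}{\eps}\,V_\eps'(o-q_{\nu,\eps})$ after invoking the minimum principle \cref{minpr} (which is where the threshold $\nu\leq\nu(\eps)$ enters, ensuring $q_{\nu,\eps}\geq -\eps/(2v_{2,1}^+)$ or similar), and with $\beta,\gamma$ absorbing the terms proportional to $\|o'\|_{L^\infty}$, $\|o''\|_{L^\infty}$, $\|V'\|_{L^\infty}$ and $v$. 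A residual contribution of the form $V_\eps'(o-q_{\nu,\eps})(o'-\partial_x q_{\nu,\eps})$ that does not fit into this quadratic is then handled by the already proven OSL bound \cref{oslq} on $\partial_x q_{\nu,\eps}-o'$, which is exactly what produces the constant $C_2$ in \eqref{const3} together with the localized norm $\sup_\eps\|V_\eps'\|_{L^\infty([k_0,\infty))}$ with threshold \eqref{const42}.

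Having the differential inequality in hand, I would close the argument by a two-sided case split on $M(t)$, again mirroring the last step of \cref{oslq}: either $M(t)\leq C_1$ where $C_1$ is the positive root of $\alpha s^2-\beta s -\gamma=0$ (giving exactly \eqref{const1} after solving the quadratic), in which case there is nothing to prove, or the right-hand side is $\leq 0$ and $M(t)\leq \max\{M(0),C_1\}$. The value $M(0)$ is estimated directly from $u_{\nu,\eps}(0,\cdot) = V_\eps'(o-q_{0,\nu})(o'-q_{0,\nu}')$, using $o-q_{0,\nu}\geq c_0$ from \cref{as1} together with the monotonicity of $V_\eps'$ on $[c_0,\infty)$ guaranteed by the sign assumption $(-1)^{k+1}V^{(k)}>0$ in \cref{asH}; this produces the third term in the claimed maximum.

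The main obstacle will be the bookkeeping of the expansion of $\partial_t u_{\nu,\eps}$ at a finite maximizer. Because the computation generates terms involving $V_\eps, V_\eps', V_\eps'', V_\eps'''$ and products of $q_{\nu,\eps}$, $(o'-\partial_xq_{\nu,\eps})$ and $o''$, extracting a clean quadratic-in-$M(t)$ bound requires simultaneously using (i) the optimality identity to trade $o''-\partial_x^2q_{\nu,\eps}$ for $(o'-\partial_xq_{\nu,\eps})^2$, (ii) the inequality $\partial_x^3 G\leq 0$ to discard the $\nu\partial_x^3 q_{\nu,\eps}$-contribution with the correct sign, (iii) the minimum principle \cref{minpr} to keep $q_{\nu,\eps}$ non-negative enough that the coefficient of $M(t)^2$ stays uniformly positive, and (iv) the ratio bound $v_{2,1}^+<0$ from \cref{asH} to obtain the sign of that coefficient. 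Once each of these is in place, the constants $C_1,C_2,k_0,c_0,v$ in \eqref{const42}--\eqref{const3} are read off essentially by inspection.
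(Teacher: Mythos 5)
Your proposal reproduces the paper's overall template --- envelope theorem via \cite[Theorem~1]{Milgrom2002}, optimality conditions at a finite maximizer, substitution of the strong PDE, minimum principle to control negative parts, ODE comparison --- and correctly locates where the third term in the maximum comes from ($m(0)$). But the sketch has a genuine gap in the step that produces $\alpha(\eps)>0$, and this gap is exactly where the paper's proof introduces an idea your outline does not contain.

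You claim the quadratic coefficient comes from $\tfrac{-v_{2,1}^+\inf_y o(y)}{\eps}\,V_\eps'(o-q_{\nu,\eps})$ after invoking the minimum principle. However, the coefficient of $m(t)^2$ in the expansion is proportional to $q_{\nu,\eps}(t,x)$, not $\inf_y o(y)$, and the minimum principle \cref{minpr} only prevents $q_{\nu,\eps}$ from being appreciably negative --- it does not bound it away from zero from above. When $q_{\nu,\eps}$ is small, the coefficient of $m(t)^2$ degenerates and your proposed differential inequality $M'\leq-\alpha M^2+\beta M+\gamma$ loses its closing mechanism. The paper resolves this with a case split that your account does not articulate: if $q_{\nu,\eps}(t,x)\leq\tfrac{o(x)}{2}$ at some maximizer, then $o(x)-q_{\nu,\eps}(t,x)\geq\tfrac{o(x)}{2}\geq k_0$, so $V_\eps'(o-q_{\nu,\eps})\leq V_\eps'(k_0)$ is bounded uniformly in $\eps$ and one bounds $m(t)$ itself directly (not $m'(t)$) by $C_2\sup_\eps\|V_\eps'\|_{L^\infty([k_0,\infty))}$ via \cref{oslq}; this is where the localized norm on $[k_0,\infty)$ enters, which your ``residual contribution'' framing misdescribes. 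In the complementary case $q_{\nu,\eps}(t,x)>\tfrac{o(x)}{2}$ at every maximizer, one replaces $q_{\nu,\eps}$ by $\tfrac{\inf o}{2}>0$ in the quadratic coefficient (using $v_{2,1}^+<0$), and only then does the ODE argument close with the root $C_1$. Without this dichotomy, the quadratic you wrote cannot be established, and the appearance of $\inf_y o(y)$ in $C_1$ is not justified.
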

\begin{proof}
As before, we want to invoke~\cite[Theorem 1]{Milgrom2002}. Fix $t \in (0, T)$ and $ \nu, \eps > 0$. Since
\[
\partial_x \big(V_\eps(o(x) -q_{\nu, \eps}(t, x))\big) = V_\eps'(o(x) -q_{\nu, \eps}(t, x))(o'(x) - \partial_t q_{\nu, \eps}(t, x)),
\]
for all $x \in \mathbb{R}$, from $\lim_{x \rightarrow \pm\infty} o'(x) -\partial_x q_{\nu, \eps}(t, x) = 0$ and the boundedness of $V_\eps'$, we obtain
\[
\lim_{x \rightarrow \pm \infty} \partial_x \big(V_\eps(o(x) -q_{\nu, \eps}(t, x))\big)   = 0.
\]
The same argument can be employed for derivatives thereof. Therefore,~\cite{Milgrom2002} is indeed applicable.

Now, choose  \(x\in X(t)\), where
\[
X(t)  \coloneqq \left \lbrace x \in \mathbb{R} \cup \lbrace - \infty, \infty \rbrace: \partial_x V_\eps \big(o(x) -q_{\nu, \eps}(t, x) \big) = \sup_{y \in \mathbb{R}} \partial_y V_\eps\big(o(y) -q_{\nu, \eps}(t, y) \big)\right \rbrace.
\]
We distinguish two cases. 
\begin{description}
    \item[Case 1:] ($x \in \lbrace - \infty, \infty \rbrace$) Then, 
    \begin{align*}
        &\lim_{y \rightarrow \infty} \partial_t \partial_y\big( V_\eps(o(y) -q_{\nu, \eps}(t, y))\big) \\
        & =\lim_{y \rightarrow \infty}  V_\eps''(o(y) -q_{\nu, \eps}(t, y))(-\partial_t q_{\nu, \eps}(t, y))(o'(y) - \partial_y q_{\nu, \eps}(t, y)) \\
        &\qquad\qquad\qquad + V_\eps'(o(y) -q_{\nu, \eps}(t, y))(-\partial_{ty} q_{\nu, \eps}(t, y)) = 0 ,
    \end{align*}
    which follows from~\cref{asyb}.
    \item[Case 2:] $(x \in \mathbb{R})$
For the sake of clarity, we (have to) introduce the \emph{following abbreviations} (for $k \in \mathbb{N}_{\geq 0}$, multi-index $\alpha$):
\begin{align*}
    V_\eps^{(k)}  & \coloneqq V_\eps^{(k)} (o(x) -q_{\nu, \eps}(t, x)), \ q_{\nu, \eps}  \coloneqq q_{\nu, \eps}(t, x), \ 
    o^{(k)}  \coloneqq o^{(k)}(x),\
    \partial_\alpha q_{\nu, \eps}  \coloneqq \partial_\alpha q_{\nu, \eps}(t, x) \\
    V^{(k)} & \coloneqq V^{(k)}\left(\frac{o(x) -q_{\nu, \eps}(t, x)}{\eps} \right) = \eps^k V_\eps^{(k)}
\end{align*}
Note that $V^{(k)}$ needs to be \emph{sharply distinguished} from $V_\eps^{(k)}$; see also~\cref{asH}. Moreover, the notation $V^{(k)}$ is a bit misleading, as it still depends on $\eps$. For $V^{(k)}$, however, we have uniform $L^\infty$-bounds in $\eps$, which further motivates omitting the index.

In the following, we sometimes write $V_\eps'\cdot (o''-\partial_x^2 q_{\nu, \eps})$ rather than, e.g.,\ $V_\eps'(o-q_{\nu,\eps})(o''-\partial_x^2 q_{\nu, \eps})$, i.e., \(V_{\eps}\) without its argument. Remember that $X(t)$ is defined so that $\forall x\in X(t)$,
\[
\partial_x V_\eps\big( q_{\nu,\eps}(t, x) - o(x) \big) = \sup_{y \in \mathbb{R}}\partial_{y}V_\eps\big( q_{\nu,\eps}(t, y) - o(y) \big).
\]
So, based on standard optimality conditions, the following must hold:
 \begin{equation}\partial_x^2 V_\eps = \partial_x^2 \big(V_\eps(o(x) -q_{\nu, \eps}(t, x))\big) = 0\quad\wedge\quad
   \partial_x^3 V_\eps = \partial_x^3 \big(V_\eps(o(x) -q_{\nu, \eps}(t, x))\big) \leq 0.
   \label{eq:optimality_conditions_for_the_n_th_time}
   \end{equation}
Expanding the above expressions (in the abbreviated notation), we obtain
\begin{align*}
    \partial_x^2  V_\eps  & = V_\eps''\cdot (o' -\partial_x q_{\nu, \eps})^2 + V_\eps' \cdot\big(o'' - \partial_x^2 q_{\nu, \eps}\big) \\
    \partial_x^3  V_\eps  & = V_\eps'''\cdot (o'-\partial_x q_{\nu, \eps})^3 + 3V_\eps''\cdot(o'-\partial_x q_{\nu, \eps})(o''-\partial_x^2 q_{\nu,\eps}) + V_\eps'\cdot (o'''-\partial_x^3 q_{\nu, \eps}),
\end{align*}
and setting $\displaystyle m(t) \coloneqq \sup_{y \in \mathbb{R}} \partial_y V_\eps\big( q_{\nu,\eps}(t, y) - o(y) \big) =  V_\eps\big( q_{\nu,\eps}(t, x) - o(x) \big)$ yields
\begin{align}
    \partial_t \partial_x V_\eps & = V_\eps''\cdot (o'-\partial_x q_{\nu, \eps}) (-\partial_t q_{\nu, \eps}) + V_\eps'\cdot (-\partial_{tx}q_{\nu, \eps}) \notag \\
    & \stackrel{\eqref{eq:conservation_law_smooth}}{=} V_\eps''\cdot (o'-\partial_x q_{\nu, \eps})\big(V_\eps'\cdot (o'-\partial_x q_{\nu, \eps}) q_{\nu, \eps}+ V_\eps\cdot \partial_x q_{\nu, \eps} - \nu (\partial_x^2 q_{\nu, \eps} -o'') \big) \notag \\
    & \quad + V_\eps' \cdot \partial_x \big(V_\eps'\cdot (o'-\partial_x q_{\nu, \eps})q_{\nu, \eps} + V_\eps \cdot \partial_x q_{\nu, \eps} -\nu(\partial_x^2 q_{\nu, \eps} -o'') \big) \notag \\
    & = V_\eps''\cdot (o'-\partial_x q_{\nu, \eps})\big(V_\eps'\cdot (o'-\partial_x q_{\nu, \eps}) q_{\nu, \eps} \notag  + V_\eps\cdot \partial_x q_{\nu, \eps} - \nu (\partial_x^2 q_{\nu, \eps} -o'') \big) \notag \\
    & \quad + V_\eps' \cdot \big( V_\eps''\cdot (o'-\partial_x q_{\nu, \eps})^2 q_{\nu, \eps} + V_\eps' \cdot (o''-\partial_x^2 q_{\nu, \eps})q_{\nu, \eps} \notag\\
    & \quad + 2V_\eps'\cdot (o'-\partial_x q_{\nu, \eps})\partial_x q_{\nu, \eps}  + V_\eps\cdot \partial_x^2 q_{\nu, \eps} - \nu( \partial_x^3 q_{\nu,\eps} - o''')\big).\label{2ord} 
    \intertext{We apply \cref{eq:optimality_conditions_for_the_n_th_time} to~\cref{2ord}, use $\partial_x q_{\nu, \eps} = -(o'-\partial_x q_{\nu, \eps})+o'$ twice,  $\partial_{xx}^2 q_{\nu, \eps} = -(o''-\partial_{xx}^2 q_{\nu, \eps})+o''$, and expand to arrive at the estimate}
    & \leq  V_\eps'' \cdot V_\eps'\cdot (o'-\partial_x q_{\nu, \eps})^2 q_{\nu, \eps} - V_\eps''\cdot V_\eps\cdot (o'-\partial_x q_{\nu, \eps})^2 + V_\eps''\cdot V_\eps\cdot  (o'-\partial_x q_{\nu, \eps}) o' \notag \\
    & \quad - \nu V_\eps''\cdot (o'-\partial_x q_{\nu, \eps})(\partial_x^2 q_{\nu, \eps} -o'')  - 2\big(V_\eps'\cdot (o'-\partial_x q_{\nu, \eps}) \big)^2 \notag \\
    &\quad + 2(V_\eps')^2\cdot  (o'-\partial_x q_{\nu, \eps})o'  -V_\eps\cdot V_\eps' \cdot (o''-\partial_x^2 q_{\nu, \eps}) + V_\eps'\cdot V_\eps\cdot  o'' \\
    &\quad - \nu V_\eps'\cdot  (\partial_x^3 q_{\nu, \eps} -o'''). \notag\\
    \intertext{Using \cref{eq:optimality_conditions_for_the_n_th_time} as well as the identities  $\partial_x^2 q_{\nu, \eps} -o''= \frac{1}{V_\eps'}\left(\partial_x^2 V_\eps -V_\eps''\cdot (o'-\partial_x q_{\nu, \eps})^2\right)$ and $ V_\eps' \cdot (o''-\partial_x^2 q_{\nu, \eps})=\partial_x^2 V_\eps - V_\eps''\cdot (o'-\partial_x q_{\nu, \eps})^2$ , we obtain}
    & = \tfrac{V_\eps'' }{V_\eps'} \big(V_\eps'\cdot (o'-\partial_x q_{\nu, \eps})\big)^2 q_{\nu, \eps} - \tfrac{V_\eps'' V_\eps}{(V_\eps')^2}\big( V_\eps'\cdot (o'-\partial_x q_{\nu, \eps})\big)^2 \notag \\
    & \quad  -\tfrac{V_\eps''}{V_\eps'}V_\eps V_\eps' \cdot (o'-\partial_x q_{\nu, \eps})o'  + \nu \tfrac{(V_\eps'')^2}{(V_\eps')^4} \big(V_\eps'\cdot  (o'-\partial_x q_{\nu,\eps})\big)^3 \notag \\
    & \quad - 2\big(V_\eps' \cdot (o'-\partial_x q_{\nu, \eps}) \big)^2 + 2(V_\eps')^2 \cdot (o'-\partial_x q_{\nu, \eps})o' \notag \\
    & \quad + \tfrac{V_\eps V_\eps'' }{(V_\eps')^2} \big(V_\eps'\cdot (o'-\partial_x q_{\nu, \eps}) \big)^2 +V_\eps' V_\eps o''\notag\\
    & \quad -\nu \tfrac{V_\eps'''}{(V_\eps')^3}(V_\eps'\cdot (o'-\partial_x q_{\nu, \eps}))^3  - 3\nu V_\eps''\cdot (o'-\partial_x q_{\nu, \eps})(o''-\partial_x^2 q_{\nu, \eps}),\notag\\
    \intertext{and eventually replacing $V_\eps'\cdot (o'-\partial_x q_{\nu, \eps})$ with $m(t)$ in most cases yields}
    & \leq \tfrac{V_\eps''}{V_\eps'} m(t)^2 q_{\nu, \eps} -\tfrac{V_\eps''}{V_\eps'}V_\eps o' m(t) + \nu \tfrac{(V_\eps'')^2}{(V_\eps')^4}\big( V_\eps'\cdot (o'-\partial_x q_{\nu, \eps})\big)^3 - 2m(t)^2 \notag \\
    & \quad + 2V_\eps'o'm(t) + V_\eps'V_\eps o''  -\nu \tfrac{V_\eps'''}{(V_\eps')^3}m(t)^3 + 3 \nu \tfrac{(V_\eps'')^2}{V_\eps'}\cdot (o'-\partial_xq_{\nu, \eps})^3. \notag
    \intertext{We factor out $V_\eps'$ for several expressions. Additionally, for $0\leq \nu \leq \nu(\eps) \leq \eps $ according to~\cref{oslq}, we have $C>0$ independent of $\nu, \eps > 0$ (see~\cref{tic}) such that $\partial_x q_{\nu, \eps} - o' \geq  C$ for all $(t, x) \in [0, T] \times \mathbb{R}$. Therefore, we can estimate $(o'-\partial_x q_{\nu, \eps})^3 \leq C^3$ if $\nu$ is sufficiently small:}
    & \leq -2m(t)^2 + V_\eps'\bigg( \tfrac{V_\eps''}{(V_\eps')^2}m(t)^2 q_{\nu, \eps}-\tfrac{V_\eps V_\eps''}{(V_\eps')^2}o'm(t) +\nu \tfrac{(V_\eps'')^2}{(V_\eps')^2}C^3 +2m(t)o' \label{eq:tbe1}  \\ 
    & \qquad\qquad \qquad \qquad \qquad  +V_\eps o''  - \nu \tfrac{V_\eps'''}{(V_\eps')^4}m(t)^3 + 3 \nu \tfrac{(V_\eps'')^2}{(V_\eps')^2}C^3 \bigg) \label{eq:tbe2}
    \end{align}
\end{description}
    Therefore, combining both cases, we show that by using \cite[Theorem 1]{Milgrom2002}, for a.e. $t \in (0, T)$,
    \begin{equation}
    \begin{split}
        m'(t) &\leq \sup_{x \in X(t)} \max \bigg \lbrace -2m(t)^2 + V_\eps'\bigg( \tfrac{V_\eps''}{ (V_\eps')^2}m(t)^2 q_{\nu, \eps}-\tfrac{V_\eps V_\eps''}{(V_\eps')^2}o'm(t) +\nu \tfrac{(V_\eps'')^2}{(V_\eps')^2}C^3   \\   & \qquad \qquad\qquad \qquad \quad +2m(t)o' +V_\eps o''  - \nu \tfrac{V_\eps'''}{(V_\eps')^4}m(t)^3 + 3 \nu \tfrac{(V_\eps'')^2}{(V_\eps')^2}C^3 \bigg) , 0 \bigg \rbrace.
        \end{split}\label{absder}
    \end{equation}
        Again, two cases have to be distinguished. For this, fix $t \in (0, T)$ such that $m$ is differentiable (this is possible a.e.).
    \begin{itemize}
        \item \textbf{Case 2.1: }($\exists x \in X(t): q_{\nu, \eps}(t, x) \leq \tfrac{o(x)}{2}$) In this case, a bound on $m$ can be obtained immediately without~\cref{absder}: Choose some $x \in X(t)$ such that $q_{\nu, \eps}(t, x) \leq \tfrac{o(x)}{2}$. If we choose $\nu > 0$ as in~\cref{oslq}, then from~\cref{oslq}, 
    \begin{equation}\label{tic}
        o'(x) - \partial_x q_{\nu, \eps}(t, x) \leq  T\| o'' \|_{L^\infty(\mathbb{R})} - \min\lbrace - 2 \| o' \|_{L^\infty(\mathbb{R})}, L_- \rbrace =: C
    \end{equation}
    holds. Here, \(L_- \coloneqq \essinf_{y\in \R}\big(q_0'(y) - o'(y)\big)\).
    Then, we can estimate the following:
    \begin{align*}
    m(t) & = \sup_{y \in \mathbb{R}} \partial_y \big(V_\eps(o(y) -q_{\nu, \eps}(t, y)) \big)  = \partial_x \big(V_\eps(o(x) -q_{\nu, \eps}(t, x))\big) \\
    & = V_\eps'(o(x) -q_{\nu, \eps}(t, x)) (o'(x) -\partial_x q_{\nu, \eps}(t, x)) \\
    & \overset{\text{\eqref{tic}}}{\leq} CV_\eps'(o(x) -q_{\nu, \eps}(t, x))  
    \intertext{where $q_{\nu, \eps}(t, x) \leq \tfrac{o(x)}{2}$ and that $V_\eps$ is decreasing.}
    & \leq C V_\eps'\left(\tfrac{o(x)}{2} \right) \leq C V_\eps'\left(\tfrac{\inf_{y \in \mathbb{R}}o(y)}{2} \right) 
    \end{align*}
    Since the infimum of $o$ is positive per~\cref{as1}, the last expression has a uniform bound in $\eps>0$. Namely,
    \begin{equation}\label{gboound}
    \sup_{y \in \mathbb{R}} \partial_y\big(V_\eps(o(y) -q_{\nu, \eps}(t, y))\big) = \partial_x \big(V_\eps(o(x) -q_{\nu, \eps}(t, x))\big) \leq C \sup_{\eps > 0} \left \lVert V_\eps' \right \rVert_{L^\infty\left(\left[k_0, \infty\right) \right)},
    \end{equation}
    where $k_0 := \tfrac{\inf_{y \in \mathbb{R}}o(y)}{2}$.
    \item \textbf{Case 2.2:} $(q_{\nu, \eps}(t, x) > \tfrac{o(x)}{2}\text{ for all }x \in X(t))$: Here, we estimate~\cref{eq:tbe1,eq:tbe2} (once again in the abbreviated notation) using $q_{\nu, \eps}(t, x) > \tfrac{o(x)}{2}$ and $\tfrac{V_\eps''}{(V_\eps')^2}<0$:
    \begin{align*}
        m'(t) & \leq -2m(t)^2 + V_\eps'\bigg( \tfrac{V_\eps''}{(V_\eps')^2}m(t)^2 \tfrac{o(x)}{2} - \tfrac{V_\eps V_\eps''}{ (V_\eps')^2}o'm(t) + \nu \tfrac{(V_\eps'')^2}{(V_\eps')^2}C^3 + 2m(t)o' \\
       & \qquad + V_\eps o'' - \nu \tfrac{V_\eps'''}{(V_\eps')^4}m(t)^3 + 3 \nu \tfrac{(V_\eps'')^2}{(V_\eps')^2}C^3 \bigg) \\
       \intertext{Recall $- \inf_{y \in \mathbb{R}} o(y) < 0$ and the estimates from the fifth point of~\cref{asH} to control the quotients. We set $v :=  \left \lVert \tfrac{V''}{V'} \right \rVert_{L^\infty\left([0, \infty)] \right)} < \infty$, factor out $(V')^{-1}$ and estimate the other terms roughly:}
       & \leq -2m(t)^2 + V_\eps' \bigg( \tfrac{1}{V'}\big( \tfrac{1}{2} \underbrace{v_{2, 1}^+}_{<0\text{ by~\cref{asH}}} \inf_{y \in \mathbb{R}} o(y) m(t)^2 +v\| o' \|_{L^\infty(\mathbb{R})} | m(t)|\\ 
       & \qquad + 2V' \| o' \|_{L^\infty(\mathbb{R})}| m(t) | \big) +   \| o'' \|_{L^\infty(\mathbb{R})}+  \tfrac{4\nu v^2}{\eps^2}C^3 - \nu \tfrac{V_\eps'''}{(V_\eps')^4}m(t)^3 \bigg)
       \intertext{We additionally require $\nu \leq \tfrac{\eps^2}{4v^2C^3}$ to get remove $\eps$.}
       & \leq -2m(t)^2\\
       & \quad + V_\eps'\bigg(\tfrac{1}{V'}\big(\tfrac{1}{2}v_{2, 1}^+\inf_{y \in \mathbb{R}} o(y) m(t)^2 + \big( v+2\lVert V' \rVert_{L^\infty(\mathbb{R})}  \big) \| o' \|_{L^\infty(\mathbb{R})}| m(t) | \big) \\
       & \qquad \qquad \qquad\qquad + \| o'' \|_{L^\infty(\mathbb{R})} + 1 - \nu \tfrac{V_\eps'''}{(V_\eps')^4}m(t)^3 \bigg)
       \intertext{Let $C_1$ be as in \cref{const1}. If $m(t)\leq C_{1}$, a feasible bound is found. If $m(t) > C_{1}$, then one can compute $\tfrac{1}{2}v_{2, 1}^+\inf_{y \in \mathbb{R}} o(y) m(t)^2 + \big( v+2\lVert V' \rVert_{L^\infty(\mathbb{R})}  \big) \| o' \|_{L^\infty(\mathbb{R})}| m(t) | < \lVert V' \rVert_{L^\infty\left([0, \infty) \right)}\big(-\lVert o''\rVert_{L^\infty(\mathbb{R})} -1\big)$. In particular, $-\nu V_\eps'''(V_\eps')^{-4} \leq 0$. So, we estimate for this case}
       & \leq - 2m(t)^2 + V_\eps'\bigg( \tfrac{\lVert V' \rVert_{L^\infty\left([0, \infty) \right)}}{V'} \big(-\| o'' \|_{L^\infty(\mathbb{R})} - 1 \big) + \| o'' \|_{L^\infty(\mathbb{R})}+1 \bigg) \\
       & \leq -2m(t)^2  \leq 0
    \end{align*}
    Thus, if  a maximal interval $I$ exists such that $m(t) > C_1$ with \(C_{1}\) as in \cref{const1} and $q_{\nu, \eps}(t, x) > \tfrac{o(x)}{2}$ for all $t \in I$, then~\cref{absder} becomes $m'(t) \leq 0$ for a.e.\ $t \in (0, T)$, whence
    \[
    m(t) \leq \max \lbrace m(0), C_{1} \rbrace.
    \]
    If no such interval $I$ exists, then, by continuity, $m$ stays below the maximum $C_{1}$ and the bound computed in~\cref{gboound}.
    \end{itemize}
    Finally, in \textbf{all cases}, we obtain that 
    \[
    m(t) \leq \max \left \lbrace C_{1}, m(0) ,  C_2 \sup_{\eps > 0} \left \lVert V_\eps' \right \rVert_{L^\infty\left(\left[k_0, \infty\right) \right)} \right \rbrace,
    \]
with $C_{2}$ defined as in~\cref{const3}. It remains to prove that $m(0)$ is bounded uniformly in $\nu, \eps$, which is a consequence of
    \begin{align*}
       m(0) = \sup_{x\in \R} \partial_x \left( V_\eps\big( o(x) -q_0(x) \big)\right) &\overset{\cref{asH}, e)}{\leq} \esssup_{y\in \R}(o'(y) - q_0'(y)) \sup_{\eps > 0} \lVert V_\eps' \rVert_{L^\infty\left([c_0, \infty) \right) },
    \end{align*}
    where $c_0 \coloneqq \essinf_{y\in \R} (o(y)-q_0(y))$, which completes the proof.  
\end{proof}

The obtained results allow us to pass to the limit, i.e., $\eps \searrow 0$:
\begin{theorem}[Convergence of $q_\eps$ and $V_\eps(o-q_\eps)$]\label{alllim}
    Let~\cref{as1} and \cref{as2} hold and $q_{ \eps}$ be the solution to \cref{eq:conservation_law_smooth} according to \cref{entropynu} for $\eps \in\R_{>0}$. Then, there exists $q^\star \in C\big([0, T]; L^1_{\text{loc}}(\mathbb{R})\big)$  s.t. along a subsequence $(\eps_{k})_{k\in\N},\ \lim_{k\rightarrow\infty} \eps_{k}=0$, and the following holds:
  \begin{align*}
  q_{\eps_k} &\rightarrow q^\star  \text{ in } C([0, T]; L^1_{\text{loc}}(\mathbb{R})),
  \intertext{ and there exists $V^\star \in L^\infty((0,T)\times \R)$ such that}
V_{\eps_k}(o -q_{\eps_k})  &\overset{*}{\rightharpoonup} V^{\star} \text{ in } L^{\infty}((0,T)\times\R).
\end{align*}
\end{theorem}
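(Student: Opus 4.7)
The plan is to chain two vanishing-viscosity-type compactness arguments. First, for each fixed $\eps > 0$, I transfer the uniform OSL bounds of \cref{oslq,oslV} and the temporal TV estimate of \cref{tvinx} to the entropy solution $q_\eps$ by sending $\nu \searrow 0$. Crucially, all constants appearing in these three results are $\eps$-uniform: the only $\eps$-dependent ingredients are expressions of the form $\sup_{\eps > 0}\|V_\eps'\|_{L^\infty([c, \infty))}$ with $c > 0$, each finite by \cref{asH}. Second, I extract a convergent subsequence as $\eps \searrow 0$ via a Simon--Kolmogorov compactness argument analogous to the one used in \cref{entropynu}.

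For the first step, fix $t \in [0,T]$ and apply \cref{tvo} in the spatial variable to the sequence $(q_{\nu_j,\eps}(t,\cdot))_j$ indexed by some $\nu_j \searrow 0$. The uniform-in-$\nu$ OSL upper bound from \cref{oslq} and the $L^\infty$-bounds on $q_{\nu,\eps}$ (uniform in $\nu$ via \cref{mprin,minpr}) provide the hypotheses of \cref{tvo}. The resulting $L^1_{\text{loc}}$-limit must coincide with $q_\eps(t,\cdot)$ by \cref{entropynu}, and thus $q_\eps(t,\cdot) \in \text{BV}_{\text{loc}}(\R)$ with an $\eps$-uniform bound. Applying the same reasoning to $V_\eps(o(\cdot) - q_{\nu_j,\eps}(t,\cdot))$, using \cref{oslV} and $\|V_\eps\|_{L^\infty} \leq 1$, yields $\eps$-uniform $\text{BV}_{\text{loc}}$ bounds on $V_\eps(o - q_\eps(t,\cdot))$. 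In parallel, the TV-in-time estimate of \cref{tvinx} provides a uniform-in-$(\nu,\eps)$ Lipschitz modulus of $t \mapsto q_{\nu,\eps}(t,\cdot)$ with values in $L^1(\R)$, which descends to $q_\eps$ in the $\nu \searrow 0$ limit.

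For the second step, the spatial $L^1_{\text{loc}}$-compactness at each time together with the temporal equicontinuity allows the Simon--Kolmogorov theorem~\cite{Simon1986CompactSI} to extract a subsequence $(\eps_k)_{k\in\N}$ with $\eps_k \searrow 0$ and a limit $q^\star \in C([0,T]; L^1_{\text{loc}}(\R))$ such that $q_{\eps_k} \to q^\star$ in this space. For the velocity, the uniform bound $\|V_{\eps_k}(o - q_{\eps_k})\|_{L^\infty((0,T)\times\R)} \leq 1$ combined with Banach--Alaoglu (applied using the separability of $L^1((0,T)\times\R)$) yields a further subsequence along which $V_{\eps_k}(o - q_{\eps_k}) \overset{*}{\rightharpoonup} V^\star$ in $L^\infty((0,T)\times\R)$, as claimed.

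The main obstacle is Step 1, specifically the bookkeeping needed to confirm that the constants in \cref{oslq,oslV,tvinx} are genuinely $\eps$-uniform (they are, through \cref{asH}) and that the hypotheses of \cref{tvo} are met at the level of the viscous approximants $q_{\nu_j,\eps}$ for fixed $\eps$. Once Step 1 is carried out cleanly, Step 2 is entirely standard.
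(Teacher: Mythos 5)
Your proposal is correct and takes essentially the same approach as the paper, which compresses the proof of \cref{alllim} into a one-line citation of \cref{tvinx,entropynu,tvo,oslq}; you simply spell out how those ingredients combine with the Simon--Kolmogorov and Banach--Alaoglu theorems. (Minor terminology: the bound you invoke from \cref{oslq} is an OSL bound \emph{from below}, i.e.\ $\inf_{x}\partial_x q_{\nu,\eps}$ is uniformly bounded, which is the variant covered by the remark following \cref{tvo}, not an ``upper bound''.)
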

\begin{proof}
 This follows from \cref{tvinx,entropynu,tvo,oslq}.
\end{proof}

\begin{remark}[Missing time compactness of \(V_{\eps}\) to obtain strong convergence]
Note that thanks to \cref{oslV}, one obtains by \cref{tvo} total variation estimates of \(V_{\eps}(o-q_{\eps}(t,\cdot))\) in a space uniform in \(\eps\in\R_{>0}\). However, we could not obtain the required (compare \cite[Theorem 3]{Simon1986CompactSI}) time compactness for strong convergence in \(C([0,T];L^{p}_{\text{loc}}(\R)),\  p\in[1,\infty)\), but we are left with a weaker form of convergence: Either the weak-star convergence in \cref{alllim} or,
for every $t \in [0,T]$, there exists $V_t^* \in L^\infty(\R)$ and a sequence $(\eps_k)_{k\in \N}$ with $\lim_{k\rightarrow \infty} \eps_k = 0$ such that
    \begin{align*}
  \lim_{k\rightarrow \infty} \| V_{\eps_k}(o(\cdot) -q_{\eps_k}(t, \cdot)) - V^*_t \|_{L^1_{\text{loc}}(\mathbb{R})}  &= 0.
  \end{align*}
  Indeed, $V_t^*$ is OSL-bounded from above (see~\cref{oslV}).
\end{remark}
One may wonder what the velocity \(V_{\eps}(o-q_{\eps})\) and solution \(q_{\eps}\) satisfy in the limit. This is characterized in the following:
\begin{lemma}[Dynamics of the solution and velocity in the limit \(\eps\rightarrow 0\)]\label{lem:limit_dynamics}
The sequence \(q_{\eps_{k}},\) \(V_{\eps_{k}}(o-q_{\eps_{k}})\) of \cref{alllim} satisfies in the limit \(\eps\rightarrow 0\) the following weak form of a conservation law for \(\varphi\in C^{1}_{\text{c}}((-42,T)\times\R)\):
    \begin{align}
        \int^T_0 \!\!\!\int_{\mathbb{R}} q^\star(t, x) \partial_t \varphi(t, x) + V^\star(t,x)q^\star(t,x)\partial_x  \varphi(t, x)\dd x \dd t 
        & = -\int_\R q_0(x) \varphi(0,x) \dd x \label{dsol}
    \end{align}
\end{lemma}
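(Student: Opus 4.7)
My plan is to pass to the limit in the weak formulation of the conservation law satisfied by $q_{\eps}$ for fixed $\eps>0$, using the compactness results from \cref{alllim}. First I would note that, by \cref{theo:uniqueness_entropy_solution}, $q_{\eps}$ is the unique entropy solution of \cref{eq:conservation_law_smooth}; selecting in \cref{entropy} the Kru\v{z}kov entropy pairs with $\kappa > \|q_{\eps}\|_{L^\infty} = \|o\|_{L^\infty}$ and $\kappa < 0$ and adding the two resulting inequalities eliminates the $\sgn$-terms and yields the standard distributional identity
\begin{align*}
\int_0^T\!\!\!\int_{\R} q_{\eps}(t,x)\partial_t\varphi(t,x) + V_{\eps}\big(o(x)-q_{\eps}(t,x)\big)q_{\eps}(t,x)\partial_x\varphi(t,x)\dd x\dd t = -\int_{\R} q_0(x)\varphi(0,x)\dd x
\end{align*}
for every $\varphi\in C^1_{\mathrm c}((-42,T)\times\R)$. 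Here I use that $q_{\eps}(0,\cdot)\equiv q_0$ in $L^1_{\loc}$.

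Next I would pass to the limit $\eps_k\to 0$ along the subsequence provided by \cref{alllim}. For the time-derivative term, the strong convergence $q_{\eps_k}\to q^\star$ in $C([0,T];L^1_{\loc}(\R))$ combined with the compact support of $\partial_t\varphi$ immediately gives
\begin{align*}
\int_0^T\!\!\!\int_{\R} q_{\eps_k}\partial_t\varphi \dd x\dd t \longrightarrow \int_0^T\!\!\!\int_{\R} q^\star\partial_t\varphi \dd x\dd t.
\end{align*}
The right-hand side $-\int_{\R} q_0\varphi(0,\cdot)$ is independent of $\eps$, so nothing needs to be done there.

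The main obstacle is handling the flux term, since it is the product of the only weakly-$*$ converging factor $V_{\eps_k}(o-q_{\eps_k})$ with $q_{\eps_k}$. The idea is to regroup it as $V_{\eps_k}(o-q_{\eps_k})\cdot\bigl(q_{\eps_k}\partial_x\varphi\bigr)$, so that the two weakly/strongly converging factors are separated. Since $\partial_x\varphi\in L^\infty$ has compact support $K\subset(0,T)\times\R$ and $q_{\eps_k}\to q^\star$ in $L^1(K)$, the product $q_{\eps_k}\partial_x\varphi$ converges strongly in $L^1((0,T)\times\R)$ to $q^\star\partial_x\varphi$; moreover by \cref{theo:uniqueness_entropy_solution} these are uniformly bounded in $L^\infty$, so the convergence also holds in every $L^p$ with $p<\infty$. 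By \cref{alllim}, $V_{\eps_k}(o-q_{\eps_k})\weakstar V^\star$ in $L^\infty((0,T)\times\R)$, and the standard duality pairing ``strong $L^1$ times weak-$*$ $L^\infty$'' yields
\begin{align*}
\int_0^T\!\!\!\int_{\R} V_{\eps_k}(o-q_{\eps_k})\,q_{\eps_k}\partial_x\varphi \dd x\dd t \longrightarrow \int_0^T\!\!\!\int_{\R} V^\star q^\star\partial_x\varphi \dd x\dd t.
\end{align*}

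Combining the three limits gives exactly \cref{dsol}, completing the proof. The delicate point is genuinely only the last one: weak-$*$ convergence of $V_{\eps_k}(o-q_{\eps_k})$ alone would not suffice to identify the limit of the nonlinear product $V_{\eps_k}(o-q_{\eps_k})q_{\eps_k}$, but the strong compactness of $(q_{\eps_k})$ obtained from the OSL/TV estimates in \cref{tvinx,oslq} is precisely what rescues the argument.
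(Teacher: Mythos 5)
Your proof is correct and takes essentially the same route as the paper's: the paper's one-line proof simply invokes \cref{alllim} and the fact that the product of a strongly convergent sequence ($q_{\eps_k}$ in $C([0,T];L^1_{\loc})$) and a weak-$*$ convergent sequence ($V_{\eps_k}(o-q_{\eps_k})$ in $L^\infty$) passes to the limit in the pairing, which is precisely your third step. Your additional care in deriving the distributional identity from the Kru\v{z}kov formulation by choosing $\kappa$ above and below the range of $q_\eps$, and in grouping the flux term as $V_{\eps_k}(o-q_{\eps_k})\cdot(q_{\eps_k}\partial_x\varphi)$, just makes explicit what the paper leaves implicit.
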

\begin{proof}
    This is a direct consequence of \cref{alllim} together with the fact that the product of a strongly and a weakly-* convergent sequence converges weak-*.
\end{proof}
\begin{remark}[Discontinuous conservation laws/relation to \cref{lem:limit_dynamics}]\label{rem:discontinuous_conservation_law}
Looking back to the regularized obstacle problem in \cref{eq:conservation_law_smooth}, this equation converges to a discontinuous (in the solution and spatial variable (!)) conservation law as the velocity converges to a discontinuous function, i.e., 
\[
\lim_{\eps\rightarrow 0}V_{\eps}(o(x)-q_{\eps}(t,x)) \in \begin{cases} \{1\} &\text{if } q^{*}(t,x)< o(x)\\
[0,1]&\text{if } q^{*}(t,x)=o(x),
\end{cases} \qquad(t,x)\in(0,T)\times\R
\]
and thus, we obtain
        \begin{align}
        \int^T_0 \!\!\!\int_{\mathbb{R}} q^\star(t, x) \partial_t \varphi(t, x) + H(q^\star(t,x))q^\star(t,x)\partial_x  \varphi(t, x)\dd x \dd t 
        & = -\int_\R q_0(x) \varphi(0,x) \dd x \label{eq:discont_prob},
    \end{align}
    where \(\varphi\) is a test function as in \cref{lem:limit_dynamics} and
    $H$ is the Heaviside function with a value at zero that is undetermined, i.e., can attain any value in $[0,1]$.  
One may wonder whether applying results on the uniqueness of these discontinuous conservation laws is possible \cite{Bulek2011scalar,Bulek2017unified,Carrillo2003conservation,Martin2008convergence,Dias2005approximation,Dias2004riemann}; however, the discontinuous conservation law considered here does not seem to fit into the required frameworks.

\end{remark}

\section{Characterization of the limit \texorpdfstring{\(\eps\searrow 0\)}{} in specific cases}\label{sec:characterization_limit}
This section presents characterizations of the solutions approached in \cref{alllim} under additional assumptions on its smoothness and more. They serve to build an intuition on what happens in this limit as well as how the dynamics behave at the obstacle.

Suppose we have a solution $q\coloneqq q^\star$ as in~\cref{dsol} that is piecewise smooth in the sense that it is smooth outside of a finite number of curves in $(t,x)$, across which $q$ or $\partial_x q$ has jump discontinuities. We take a representative that is also piecewise smooth, such that $q$ and $\partial_x q$ have well-defined traces at every point of discontinuity and the coincidence set $E\subset [0,T) \times \R$, $E= \{(t,x) \in [0, T) \times \mathbb{R}: q(t,x) = o(x)\}$ is of the form
\begin{equation*}
  E = \{(t,x) \in [0, T) \times \mathbb{R}: \gamma_L(t) \leq x \leq \gamma_R(t)\},
\end{equation*}
for two continuous piecewise smooth curves $\gamma_L \leqq \gamma_R$ on \([0,T]\) with $\gamma_R$ non-decreasing. 
Suppose there exists a well-defined first collision time. We can set it without loss of generality to be $t=0$ (this is a contradiction to \cref{as1} as we postulated \(q_{0}-o<0\ \text{on } \R\); however, thanks to the semi-group property of any conservation law, we can shift time accordingly).

Then, the following result relying on the Rankine--Hugoniot condition \cite{Rankine1870thermodynamic,hugoniot1887memoir} gives a characterization of the boundary curves of the coincidence region $E$ in terms of the traces of $q$ and $\partial_x q$ from the outside of $E$:
\begin{theorem}\label{thm:Limit_V_Rankine-Hugoniot}
  Assume that $q_0$ and $o$ adhere to~\cref{as1}, $o$ is strictly decreasing on some interval $(-\infty,x_0)$, and the limits for \(t\in[0,T]\),
  \begin{align*}
  q^L(t)\coloneqq \lim_{y\nearrow \gamma_L(t)}q(t,y), \quad  q^R(t)\coloneqq \lim_{y\searrow \gamma_R(t)} q(t,y),  \quad q_x^R(t) \coloneqq  \lim_{y\searrow \gamma_R(t)} \partial_xq(t,y),
\end{align*}
exist for a.e.\ point along the curves $\gamma_L,\gamma_R$. Suppose further that $|\partial_x^2 q|$ is uniformly bounded on the (open) set $E^c$.
  Under the previous assumptions, we have the following:
  \begin{enumerate}
  \item The velocity field $(t,x)\mapsto V^*(t,x)$ as in \cref{lem:limit_dynamics} (and thus the speed of the characteristics) is given by
    \begin{equation}\label{eq:1}
      V^\star(t,x) =\begin{cases}
        1, & (t,x) \not\in E,
        \\
        \tfrac{o(\gamma_R(t))}{o(x)}, & (t,x) \in E.
      \end{cases}
    \end{equation}
  \item \(\forall t\in[0,T)\), $o(\gamma_L(t)) - q^L(t) >0$ holds, and
    \begin{equation}\label{eq:112}
            \gamma_L'(t) = \tfrac{o(\gamma_R(t))- q^L}{o(\gamma_L(t))- q^L}.
          \end{equation}
  \item In intervals where $t\mapsto\gamma_R(t)$ is differentiable, either $\gamma_R'(t) = 0$, or if $q$ is smooth at $x=\gamma_R(t)$,
      \begin{equation}\label{eq:113}
          \gamma_R'(t) = \tfrac{q_x^R(t)}{q_x^R(t) - o'(\gamma_R(t))}>1,
        \end{equation}
       which, if $q$ has a jump discontinuity at $x=\gamma_R(t)$, results in
    \begin{equation}\label{eq:2}
            \gamma_R'(t) = 1.
          \end{equation}
 \end{enumerate}
  In particular, $q$ has a discontinuity along $\gamma_L$. When $\gamma_L'(t) \le 0$, then $\gamma_L'(t)$ is the speed of a backward \emph{congestion shock} originated by the collision with the obstacle.
\end{theorem}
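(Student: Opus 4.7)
My strategy is to identify $V^\star$ separately on $E^c$ and in the interior of $E$, and then to exploit the Rankine--Hugoniot condition on the weak form~\eqref{dsol} across each of the two boundary curves $\gamma_L,\gamma_R$.

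On $E^c$ one has $q^\star(t,x)<o(x)$ strictly, hence $V_{\eps_k}(o(x)-q_{\eps_k}(t,x))=V\!\big((o(x)-q_{\eps_k}(t,x))/\eps_k\big)\to 1$ pointwise a.e.\ by the strong $L^1_\loc$-convergence provided by~\cref{alllim} together with $V(\infty)=1$ from~\cref{asH}. Combined with the uniform $L^\infty$-bound on $V_{\eps_k}$, this identifies the weak-$\star$ limit as $V^\star\equiv 1$ on $E^c$, giving the first line of~\eqref{eq:1}. Inside $E$, $q^\star=o$ is $t$-independent, so testing~\eqref{dsol} against $\varphi\in C^1_{\mathrm{c}}$ supported in the interior of $E$ reduces the PDE to $\partial_x(V^\star o)=0$ distributionally, meaning $V^\star(t,\cdot)o(\cdot)\equiv C(t)$ is constant in $x$ on each time slice of $E$.

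To determine $C(t)$, I apply Rankine--Hugoniot $\gamma_R'(t)[q^\star]=[V^\star q^\star]$ across $\gamma_R$. In the smooth subcase $q^R(t)=o(\gamma_R(t))$, the vanishing jump $[q^\star]=0$ forces flux continuity, whence $V^\star(t,\gamma_R(t)^-)=1$ and $C(t)=o(\gamma_R(t))$; this proves the second line of~\eqref{eq:1}. Differentiating the tangency identity $q(t,\gamma_R(t))=o(\gamma_R(t))$ from the right using the outer transport equation $q_t+q_x=0$ (valid on $E^c$) then gives $\gamma_R'(t)(q_x^R-o'(\gamma_R))=q_x^R$, i.e.,~\eqref{eq:113}; the strict inequality $\gamma_R'>1$ follows from $o'(\gamma_R)<0$ together with $q_x^R\le o'(\gamma_R)$ (tangency-from-below, inherited in the limit from the OSL bound of~\cref{oslq}). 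In the jump subcase $q^R(t)<o(\gamma_R(t))$, I again take $C(t)=o(\gamma_R(t))$ (argued below), so Rankine--Hugoniot reduces to $\gamma_R'(t)(q^R-o(\gamma_R))=q^R-o(\gamma_R)$, forcing $\gamma_R'(t)=1$, which is~\eqref{eq:2}.

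For $\gamma_L$, Rankine--Hugoniot with $V^\star(t,\gamma_L^-)=1$, $V^\star(t,\gamma_L^+)=o(\gamma_R(t))/o(\gamma_L(t))$, and inner trace $q(t,\gamma_L^+)=o(\gamma_L(t))$ yields $\gamma_L'(t)(o(\gamma_L(t))-q^L(t))=o(\gamma_R(t))-q^L(t)$, which is~\eqref{eq:112}. The strict positivity $o(\gamma_L(t))-q^L(t)>0$, and hence the presence of a jump along $\gamma_L$, follows by contradiction: if $q^L=o(\gamma_L)$, the relation would force $o(\gamma_R(t))=o(\gamma_L(t))$, contradicting strict monotonicity of $o$ since $\gamma_R(t)>\gamma_L(t)$ for $t>0$ (as $\gamma_R'\ge 1$ from Step~3 while $\gamma_L$ cannot outrun unit characteristic speed). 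The main obstacle I foresee is the rigorous justification of Rankine--Hugoniot for the discontinuous flux $V^\star q^\star$ (cf.~\cref{rem:discontinuous_conservation_law}) and, in particular, pinning down the inner trace $V^\star(t,\gamma_R(t)^-)=1$ in the jump case, given that $V_{\eps_k}$ only converges weak-$\star$; this will likely require combining the constancy $V^\star o\equiv C(t)$ inside $E$ with a careful limiting flux-continuity argument from the outer side along a good representative of the curves $\gamma_L,\gamma_R$.
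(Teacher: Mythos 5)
Your proposal reproduces the paper's overall architecture: split a ball around the boundary curve into $B_E \cup B_{E^c}$, use the weak form to reduce to $\partial_x(V^\star o)=0$ inside $E$ (giving $V^\star(t,\cdot)o(\cdot)\equiv C(t)$), and then read off jump conditions across $\gamma_L,\gamma_R$. Two points, however, genuinely diverge from what the paper proves, and each leaves a gap.

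First, the identification $C(t)=o(\gamma_R(t))$. You derive it from Rankine--Hugoniot across $\gamma_R$, which only closes in the \emph{smooth} subcase $[q^\star]=0$; in the jump subcase your logic is circular (you need $C(t)$ to conclude $\gamma_R'=1$, yet you need a jump condition involving $\gamma_R'$ to determine $C(t)$), and you correctly flag this as an obstacle without resolving it. The paper instead determines $C(t)$ \emph{independently of any boundary behaviour}, by exploiting the uniform one-sided Lipschitz bound on $x\mapsto V_\eps(o(x)-q_\eps(t,x))$ from \cref{oslV}: writing $x_1=\gamma_R(t)$ and picking $x_1+\tfrac1k\notin E$, the OSL bound gives $V_\eps(o(x_1+\tfrac1k)-q_\eps(t,x_1+\tfrac1k))-\tfrac{L}{k}\le V_\eps(o(x_1)-q_\eps(t,x_1))\le 1$; letting $\eps\searrow 0$ and then $k\to\infty$ forces $V^\star(t,\gamma_R(t)^-)=1$, hence $C(t)=o(\gamma_R(t))$, uniformly across both cases. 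This is the key ingredient your proposal is missing, and it is precisely what makes the jump case tractable.

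Second, your derivation of~\eqref{eq:113} by ``differentiating the tangency identity $q(t,\gamma_R(t))=o(\gamma_R(t))$ from the right using the outer transport equation'' is the formal computation which the paper itself writes down and then explicitly rejects: while $t\mapsto q(t,\gamma_R(t))$ is differentiable, $q$ is not differentiable at $(t,\gamma_R(t))$, so the chain rule is not legitimate there. The paper repairs this with a careful one-sided difference-quotient argument ($S_{t,\lambda}$), splitting the increment into a time part and a space part, using the transport equation only at points $(t-s,\gamma_R(t))$ which lie strictly in $E^c$, and controlling the remainder by the uniform bound on $\partial_x^2 q$ in $E^c$ assumed in the theorem. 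Your proposal needs this (or an equivalent) rigorous substitute; as written it asserts the conclusion of a calculation the paper shows is unjustified.

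Minor points: your strict-inequality argument for $o(\gamma_L)-q^L>0$ invokes ``$\gamma_R'\ge 1$ from Step~3'' which introduces a soft circularity (Step~3 already used $C(t)$); the paper avoids this by simply assuming $\gamma_L(t)<\gamma_R(t)$ in that step and using strict monotonicity of $o$ directly against~\eqref{eq:555}. Also, for~\eqref{eq:113} you need the strict inequality $q_x^R<o'(\gamma_R(t))$, not merely $q_x^R\le o'(\gamma_R(t))$, to avoid a vanishing denominator; the equality case is covered by the alternative $\gamma_R'(t)=0$ in the theorem statement.
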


  \proof
  Consider a sufficiently small open ball $B$ centered on a point $\gamma_L(t)$ where $\gamma_L$ is smooth. Let $\Gamma_L=\big \{(t, y) \in [0, T] \times \mathbb{R}: (t, y) = (t,\gamma_L(t))\big \}$ and set $B = B_E \cup (\Gamma_L\cap B) \cup B_{E^c}$ with obvious notation (see~\cref{BE}).
   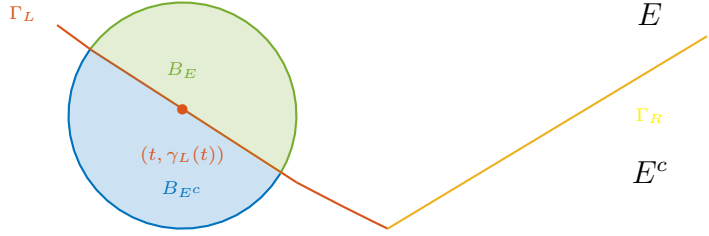
\begin{figure}[h!]
      		\begin{center}
			\begin{tikzpicture}[scale = 1.5]
    			\node[at={(-4.5, -0.3)}, matlaborange]{\footnotesize $\Gamma_L$};
                \node[at={(1, -1.2)}, yellow]{\footnotesize $\Gamma_R$};
                \node[at={(-3.1, -1.85)}, matlabblue]{\footnotesize $B_{E^c}$};
                \node[at={(-3.1, -0.8)}, matlabgreen]{\footnotesize $B_{E}$};
                \node[at={(1, -0.3)}, black]{\large $E$};
                \node[at={(1, -1.7)}, black]{\large $E^c$};
                \draw [matlabgreen,thick,domain=-31:144, name path = E] plot ({cos(\x)-3.1}, {sin(\x)-1.2});
                \draw [matlabblue,thick,domain=144:329, name path = F] plot ({cos(\x)-3.1}, {sin(\x)-1.2});
                \draw [matlaborange, thick] plot [smooth] coordinates {(-4.2, -0.4) (-3.9, -0.62)};
                \draw [matlaborange, thick, name path = B] plot [smooth] coordinates {(-3.9, -0.62) (-2.1, -1.79)};
                \draw [matlaborange, thick] plot [smooth] coordinates {(-2.1, -1.79) (-1.7, -2) (-1.3, -2.2)};
                \draw [matlabyellow, thick, name path = C] plot [smooth] coordinates {(-1.3, -2.2) (1.5, -0.5)};
                \tikzfillbetween[of=B and E]{matlabgreen, opacity=0.2};
                \tikzfillbetween[of=B and F]{matlabblue, opacity=0.2};
                \node[at={(-3.1, -1.15)}, matlaborange]{\textbullet};
                \node[at={(-3.1, -1.55)}, matlaborange]{\footnotesize $(t, \gamma_L(t))$};
			\end{tikzpicture}
		\end{center}
        \caption{Illustration of the sets $\Gamma_L=\{(t,\gamma_L(t))\}$, $B_{E^c}$ and $B_E$. $E$ is the set ``above'' the green and orange curves. $E^c$ denotes is its complement.}
      \label{BE}
  \end{figure}
   
Take a test function $\varphi$ supported in $B$. Since $q$ is a distributional solution of the conservation law
  $
  q_t + (V^*q)_x =0
  $
  in $(0,T)\times \R,$ we have (here, for the sake of brevity, we omit the arguments of the functions)
  \begin{align}
    0 &= \int_B q \partial_t \varphi +V^* q \partial_x \varphi~\dd (t,x)\notag \\
     & = \int_{B_E}  q \varphi_t + V^* q \partial_x \varphi~ \dd (t, x) + \int_{B_{E^c}} q \varphi_t + V^* q \partial_x \varphi~ \dd (t, x) \notag
    \intertext{as $q\equiv o \text{ on } E,\, V^* \equiv 1 \text{ on }E^c$, and we obtain}
    & = \int_{B_E} o\partial_t \varphi+ V^* o \partial_x \varphi \dd (t, x)   + \int_{B_{E^c}} q \partial_t \varphi + q \partial_x \varphi \dd (t, x).\label{eq:222}
  \end{align}
  Let us look at each of the two integrals in turn. First, note that in the coincidence set $E$, the obstacle $o$ is a distributional solution of 
  \begin{equation}\label{distrobs}
  \partial_t o(x) + \partial_x(V^*(t,x)o(x)) = 0 \ \forall x \in E.
  \end{equation}
Since $o$ does not depend on $t$, we conclude that $(t,x) \rightarrow \partial_x(V^*(t,x)o(x))$ is the zero distribution on $E$. Therefore, owing to distribution theory, the function $\big((t,x) \mapsto V^*(t,x)o(x) \big) \in L^\infty(E)$ is constant in $x$ for almost all $t$. Thus, $V^*(t,\cdot)o(\cdot) \equiv c_t$ holds a.e. on $ (\gamma_L(t),\gamma_R(t))$ for a.e.~$t$.

  To identify $c_t$, we recall that $V_\eps(o-q_\eps)$ is OSL continuous from below in the spatial variable (see~\cref{oslV} and apply the limit) uniformly in $\eps$. Let us see that, actually,
  \begin{equation*}
    c_t = o(\gamma_R(t)).
  \end{equation*}
  Let $L>0$ be this Lipschitz constant. Writing $x_1 = \gamma_R(t)$, observe that for a.e.~$t$ for which $\gamma_{L,R}(t)$ are defined, and $\eps > 0$:
\[
V_\eps\bigg(o\big(\underbrace{x_1+\tfrac{1}{k}}_{\notin E} \big) - q_\eps\big(t, x_1+\tfrac{1}{k}\big)\bigg) - \tfrac{L}{k} \leq V_\eps\big(o(x_1) -q_\eps(t, x_1) \big) \leq 1.
\]
As $\eps \searrow 0$, we obtain that the left-hand side converges to $1 - \tfrac{L}{k}$ since $x_1 + \tfrac{1}{k} \notin E$ (see the definition of $V_\eps$), whereas $V_\eps\big( o(x_1) -q_\eps(t, x_1)\big) \rightarrow V^*(t, x_1)$ as $\eps \searrow 0$. So, in total,
\[
1 - \tfrac{L}{k} \leq V^*(t, x_1) \leq 1
\]
for all $k \in \mathbb{N}$, whence $V^*(t, x_1) = 1$ for a.e. $t \in [t_0, t_1]$. Thus, plug $x = x_1$ into $V^*(t,x)o(x) = c_t$ to obtain $c_t = o(x_1)=o(\gamma_R(t))$. This proves the representation \cref{eq:1}.

 Going back to \cref{eq:222}, we deduce for the first integral, using the divergence theorem  (let $\sigma$ denote the $1D$ surface measure),
  \begin{align*}
    &\int_{B_E} o\partial_t \varphi + V^* o \partial_x \varphi ~\mathrm{d}(t, x) \\
    & = \int_{B_E} o \partial_t \varphi +  (o\circ \gamma_R) \partial_x \varphi ~\mathrm{d}(t, x)
    \\
    & \overset{\mathrm{div}-\text{Thm.}}{=} \int_{B_E} \mathrm{div}_{t,x}\big(o \varphi, ( o \circ \gamma_R) \varphi \big)^\top ~\mathrm{d}(t, x)
    \\
    & = \int_{\Gamma_L \cap B} (o,  o \circ \gamma_R)^\top \cdot (-\gamma_L',1)^\top \varphi~\mathrm{d}\sigma
    \\
    & = \int_{\Gamma_L \cap B} [-o\gamma_L+ o\circ \gamma_R] \varphi ~\mathrm{d}\sigma.
  \end{align*}

  For the second integral in \cref{eq:222}, we find again with the divergence theorem and the fact that on $E^c$, $q$ satisfy the transport equation,
  \begin{align*}
   &\int_{B_{E^c}} q \partial_t \varphi+ q \partial_x \varphi ~\mathrm{d}(t, x) \\
   &= -\int_{B_{E^c}} (\partial_t q + \partial_x q) \varphi~\mathrm{d}(t, x)
    + \int_{\Gamma_L \cap B} (q,q)\cdot (\gamma_L',-1)  \varphi \dd \sigma
    \\
    &= 0+  \int_{\Gamma_L \cap B} ( q^L \gamma_L' - q^L)  \varphi \dd\sigma.
  \end{align*}
  Therefore, the following holds:
  \begin{equation*}
    \int_{\Gamma_L\cap B} (-o\gamma_L'+ (o\circ \gamma_R) \varphi \dd\sigma = - \int_{\Gamma_L} ( q^L \gamma_L' - q^L)  \varphi \dd\sigma.
  \end{equation*}  
  Since $\varphi$ is arbitrary, we conclude that
  \begin{equation}\label{eq:555}
    \aligned
    -o(\gamma_L(t))\gamma_L'(t)+ o(\gamma_R(t)) = - q^L \gamma_L'(t) + q^L
    \endaligned
  \end{equation}
  for a.e. $t \in (0, T)$ exploiting the given structure of $\Gamma_L$ and the fact that the center of $B$ is arbitrary.\\
Now, suppose that $o(\gamma_L(t)) = q^L$ and $\gamma_L(t)$ is strictly smaller than $\gamma_R(t)$. Then, from the strict monotonicity of $o$ and \cref{eq:555}, we have $o(\gamma_L(t)) < o(\gamma_R(t))=q^L$, which is absurd. Therefore, $o(\gamma_L(t)) - q^L \neq 0$, and \cref{eq:112} follows by rearranging.

\cref{eq:2,eq:113} remain to be proven. Suppose that $q$ has a jump discontinuity at $x=\gamma_R(t)$. Then, by assumption, there exists the trace $q^R,$ and, necessarily, $q^R < o(\gamma_R(t))$. The proof of \cref{eq:2} is entirely analogous to the proof of \cref{eq:112}, but instead of \cref{eq:555}, we obtain
  \begin{equation*}
    \aligned
    &-o(\gamma_R(t))\gamma_R{'}(t)+ o(\gamma_R(t)) = - q^R \gamma_R{'}(t) + q^R
    \\
    & \implies \gamma_R{'}(t) (o(\gamma_R(t)) - q^R) = o(\gamma_R(t)) - q^R  ,  
    \endaligned
  \end{equation*}
  and thus, $\gamma_R'(t) =1$ since $q^R - o(\gamma_R(t)) \neq 0$.

  Suppose now that $q$ is continuous at $x=\gamma_R(t)$, $q_x$ has a trace from the right, $q_x^R$,  and $\gamma_R'(t) >0$. Then, $\frac{\dd}{\dd t}o(\gamma_R(t))<0$, which implies that $o'(\gamma_R(t)) <0$.
  Observe that necessarily, $q_x^R < o'(\gamma_R(t)) <0$. The formal calculation to show \cref{eq:113} is as follows. Compute the derivative of $o$ along the curve $\gamma_R$ to find $\frac{\dd}{\dd t} o(\gamma_R(t)) = o'(\gamma_R(t)) \gamma_R'(t)$. But, along $\gamma_R$, $o=q$ holds; therefore, using ~\cref{distrobs} for $q$ on $E^c$,
  \begin{equation}\label{eq:888}
    \aligned
    \frac{\dd}{\dd t} o(\gamma_R(t)) &= \frac{\dd}{\dd t} q(t,\gamma_R(t)) = \partial_t q(t,\gamma_R(t)) + \gamma_R'(t) \partial_x(t,\gamma_R(t))
    \\
    & = -\partial_2 q(t,\gamma_R(t)) + \gamma_R'(t) \partial_2 q(t,\gamma_R(t))
    \\
    & = -q_x^R + \gamma_R'(t) q_x^R,
    \endaligned
  \end{equation}
  giving \cref{eq:113}. However, while $t\mapsto q(t,\gamma_R(t))$ is differentiable, $q$ is not differentiable at $(t,\gamma_R(t))$, and so, the calculation is not justified. To circumvent this problem, a more careful computation is needed, which we now provide.

  Let $\lambda >0$ and consider the quantity
  \begin{equation*}
    S_{t,\lambda} \coloneqq\frac{q(t,\gamma_R(t)) - q(t-\lambda,\gamma_R(t-\lambda))}{\lambda}.
  \end{equation*}
  On the one hand, we have
  \begin{equation}\label{eq:777}
    \aligned
     \lim_{\lambda \searrow 0} S_{t,\lambda} = \frac{\dd}{\dd t}q(t,\gamma_R(t)).
    \endaligned
  \end{equation}
  On the other hand, we find
  \begin{equation}\label{eq:666}
    \aligned
    S_{t,\lambda} = \frac{q(t,\gamma_R(t)) - q(t-\lambda,\gamma_R(t))}{\lambda} + \frac{q(t-\lambda,\gamma_R(t)) - q(t-\lambda,\gamma_R(t-\lambda))}{\lambda}.
    \endaligned
  \end{equation}
  The first term gives
  \begin{align*}
    \frac{q(t,\gamma_R(t)) - q(t-\lambda,\gamma_R(t))}{\lambda} & = -\frac1{\lambda} \int_0^\lambda \frac{\dd}{\dd s}\big(q(t-s,\gamma_R(t))\big) \dd s
    \\
    & = \frac1\lambda \int_0^\lambda \partial_t q(t-s,\gamma_R(t)) \dd s
    \\
    &= - \frac1\lambda \int_0^\lambda \partial_2q(t-s,\gamma_R(t)) \dd s \to - q_x^R
  \end{align*}
  as $\lambda \to 0$, from our smoothness assumptions on $q$ from outside of $E$. Note that we can use the transport equation for $q$ since $(t-s,\gamma_R(t))$ is \emph{outside} the coincidence region $E$. 
  For the second term in \cref{eq:666}, we find 
  \begin{align*}
    \frac{q(t-\lambda,\gamma_R(t)) - q(t-\lambda,\gamma_R(t-\lambda))}{\lambda} & = -\frac1{\lambda} \int_0^\lambda \frac{\dd}{\dd s}\big(q(t-\lambda,\gamma_R(t-s))\big) \dd s
    \\
    &\hspace{-25mm} =  \frac1\lambda \int_0^\lambda \partial_2q(t-\lambda,\gamma_R(t-s))\gamma_R'(t-s) \dd s
    \\
    & \hspace{-25mm} = \frac1\lambda \int_0^\lambda \partial_2q(t-\lambda,\gamma_R(t))\gamma_R'(t-s) \dd s
    \\
    &\hspace{-25mm}\quad + \frac1\lambda\! \int_0^\lambda\!\!\!\! \big[  \partial_2q(t-\lambda,\gamma_R(t-s)) - \partial_2q(t-\lambda,\gamma_R(t)) \big]\gamma_R'(t-s)  \dd s
    \\
    & \hspace{-25mm} = \partial_2q(t-\lambda,\gamma_R(t))\frac1\lambda \int_0^\lambda \gamma_R'(t-s) \dd s
    \\
    &\hspace{-25mm}\quad - \frac1\lambda \int_0^\lambda\int_0^s  \frac{\dd}{\dd r} \big(\partial_2q(t-\lambda,\gamma_R(t-r))\big)\gamma_R'(t-s) \dd r \dd s
    \\
    & \hspace{-25mm} = \partial_2q(t-\lambda,\gamma_R(t))\frac1\lambda \int_0^\lambda \gamma_R'(t-s) \dd s
    \\
    &\hspace{-25mm}\quad + \frac1\lambda \int_0^\lambda\int_0^s   \partial_2^2q(t-\lambda,\gamma_R(t-r))\gamma_R'(t-r)\gamma_R'(t-s) \dd r \dd s.
  \end{align*}
  Now, clearly the first term converges to $q_x^R \gamma_R'(t)$ when $\lambda\to 0$, while the second can be easily bounded by $\lambda \|\partial_x^2 q\|_{L^\infty(E^c)} \|\gamma_R'\|_{L^\infty([0, T])}$, which also vanishes in the limit $\lambda\to 0$.

  Therefore, $\lim\limits_{\lambda\searrow 0}S_{t,\lambda} = -q_x^R + \gamma_R'(t) q_x^R,$ which together with \cref{eq:777} gives the rigorous analogue of \cref{eq:888}.
  Thus, we find that
  \begin{align*}
     o'(\gamma_R(t))\gamma_R'(t) =  -q_x^R + \gamma_R'(t) q_x^R
   \ \iff  \ \gamma_R'(t) (o'(\gamma_R(t)) - q_x^R ) = -q_x^R.
  \end{align*}
  Finally, recalling that $q_x^R < o'(\gamma_R(t)) <0,$ we obtain \cref{eq:113} by rearranging.  
This completes the proof.
  
\endproof

\section{Motivation of velocity \texorpdfstring{for \(\eps\searrow 0\)}{in the limit} by optimization}\label{sec:motivation_optimization}
The result from~\cref{thm:Limit_V_Rankine-Hugoniot} can also be \emph{formally} motivated from an optimization perspective. We can, in fact, demonstrate that the velocity $V^\star$, found in the previous chapter, is maximal in an $L^1$ sense for each time.

To this end, let us first assume that the density $q$ moves with a space- and time-dependent velocity $v:(0,T)\times \mathbb{R} \rightarrow [0, \infty)$ (the choice of this dependency becomes clear later) while obeying the scalar conservation law
\begin{equation}\label{scl}
\partial_t q(t,x) + \partial_x (v(t,x)q(t, x)) = 0, \quad q(0, x) = q_0(x)\text{  on  }(0, T) \times \mathbb{R}.
\end{equation}
The conservation laws' characteristics emanating from \((t,x)\in(0,T)\times\R\), i.e.,\ $\xi_v[t, x]:[0, T] \rightarrow \mathbb{R}$ are  given as a solution of the ODE (see, e.g., \cite[Sec. 2]{keimernonlocalbalance2017}):
\begin{equation}
\partial_s \xi_v[t, x](s) = v\big(s,\xi_v[t, x](s)\big), \quad \xi_v[t, x](t) = x,\ s\in[0,T]\label{eq:characteristics}
\end{equation}
We assume that $v$ is OSL from below in the spatial variable and essentially bounded in \(L^{\infty}((0,T)\times\R\). According to~\cite{keimernonlocalbalance2017,Bouchut1998onedimensional}, the solution to~\cref{scl} satisfies
\[
q(t, x) \coloneqq q\big(t_0,\xi_v[t, x](t_0)\big) \partial_2 \xi_v[t, x](t_0)
\]
for all $(t, x) \in [0, T] \times \mathbb{R}$ and $t_0 \in [0, T]$. 
The previously mentioned solution formula makes the semi-group property (in time) of the dynamics visible.

We then define the $L^2$-obstacle violation by introducing the function $V:[0, T] \rightarrow \mathbb{R}$ for all \(t \in [0, T]\) and choose a \(t_{0}\in (0,T)\):
\begin{align}
V(t) & \coloneqq  \int_{\mathbb{R}} \bigg( q\big(t_0,\xi_v[t, x](t_0)\big) \partial_2 \xi_v[t, x](t_0) - o'(x)  \big)^+\bigg)^2~\mathrm{d}x\label{eq:defi_V}
\intertext{Performing a substitution according to \cref{eq:characteristics}, i.e.,\ \(y \coloneqq \xi_v[t, x](t_0)\), we obtain}
&=\int_{\mathbb{R}} \bigg(\big( q(t_0,y) -o(\xi_v[t_0, y](t)) \partial_2 \xi_v[t_0, y](t) \big)^+\bigg)^2~\mathrm{d}y,\notag
\end{align}
with \((\cdot)^{+}\coloneqq\max\{\cdot,0\}\). Thanks to the previously assumed regularity, we can compute the time-derivative and obtain the following by applying the chain rule for \(t\in[0,T]\):
\begin{align}
    V'(t) &= 2\int_{\mathbb{R}} \big( q(t_0,y) -o(\xi_v[t_0, y](t)) \partial_2 \xi_v[t_0, y](t) \big)^+ \big(-o(\xi_v[t_0, y](t))\partial_{23}\xi_v[t_0, y](t) \notag \\
    & \qquad - o'(\xi_v[t_0, y](t)) \partial_3 \xi_v[t_0, y](t) \partial_2 \xi_v[t_0, y](t)\big) ~\mathrm{d}y \notag, \\
    \intertext{and substituting  $\partial_3 \xi_v[t_0, y](t) = v(\xi_v[t_0,y](t))$  according to \cref{eq:characteristics},}
    &  = -2\int_{\mathbb{R}} \underbrace{\big( q(t_0,y) -o(\xi_v[t_0, y](t)) \partial_2 \xi_v[t_0, y](t) \big)^+}_{\geq 0}\notag \\
    & \qquad \cdot \big(o(\xi_v[t_0, y](t))v'(\xi_v[t_0,y](t))  +\partial_y (o(\xi_v[t_0, y](t))) v(\xi_v[t_0,y](t))\big)~\mathrm{d}y. \label{viol}
\end{align}
This yields, in particular (recalling again the properties of the characteristics in \cref{eq:characteristics}),
\begin{equation}
V'(t_{0})= -2\int_{\mathbb{R}} \underbrace{\big( q(t_0,y) -o(y) \big)^+}_{\geq 0}\big(o(y)v'(y)  + o'(y) v(y)\big)~\mathrm{d}y. \label{eq:V_diff}
\end{equation}
On an abstract level, we have the function \(V\) measuring the violation of the obstacle. We also have (the initial datum respects the obstacle) that \(V(0)=0\). If we can manage to show that
\begin{equation}
V'(t)\leq 0 \ \forall t\in[0,T]:\ V(t)>0,\label{eq:V'leq0}
\end{equation}
we know that the obstacle is never violated, i.e, \(V\equiv 0\).

Thus, assume there exists \(t\in(0,T]\) such that \(V(t)>0\). Then, there exists \(E_{>}(t)\subset\R\) measurable with Lebesgue-measure greater zero so that
\[
q(t,y)-o(y)>0\ \forall y\in E_{>0}(t).
\]
Recalling \cref{eq:V_diff}, we have 
\[
V'(t)=-2\int_{E_{>}(t)} \underbrace{\big( q(t_0,y) -o(y) \big)^+}_{>0}\big(o(y)v'(y)  + o'(y) v(y)\big)~\mathrm{d}y,
\]
and so, we obtain by postulating \cref{eq:V'leq0} in a weak sense that
\[
o(y)v'(y)  + o'(y) v(y) \geq 0\ \forall y\in E_{>}(t).
\]
If we keep in mind that we want the velocity at time \(t\in[0,T]\) to be maximal in some topology, we obtain the optimization problem (in \(L^{1}\))
\begin{equation}
\begin{aligned}
    \max_{v\in L^{\infty}(\R;[0,1])}&\int_{\R} v(x)-1\dd x\\
        &\text{subject to}\\
        &\tfrac{\dd}{\dd y}\big(o(y)v(y)\big)\geq 0\ &&\forall y\in E_{>}(t),
\end{aligned}
\label{eq:optimality_system}
\end{equation}
where the inequality constraint is meant distributionally.

For sets \(E_{>}(t)\) that are irregular (e.g., Cantor sets),  the authors are unccertain on how to characterize solutions to the \cref{eq:optimality_system}, but if this could be established, it may even present a more direct way to ``solve the obstacle problem.''
However, assuming that the set \(E_{>}(t)\) is a union of disjoint intervals \((I_{j})_{j\in J}\subset \R\) for an index set \(J\), we can solve on each on these intervals \cref{eq:optimality_system}. %
\begin{theorem}[The solution to \cref{eq:optimality_system}]\label{maxcon}
For \(t\in[0,T]\), the optimality system in \cref{eq:optimality_system} with the additional assumption that, for an index set \(J\) and a set of disjoint intervals \((I_{j})_{j\in J}\),
\[
 E_{>}(t)=\bigcup_{j\in J}I_{j},\quad I_{j}\cap I_{k} =\emptyset \qquad\forall j\in J,\ \forall k\in J\setminus\{j\}
 \]
 admits a unique solution
 \[
v_{t}(x)=\begin{cases}
    \frac{o(\sup(I_{j}))}{o(x)}, &x\in I_{j}, j\in J\\
    1&\text{else},
\end{cases}
\qquad \forall x\in\R.
 \]
\end{theorem}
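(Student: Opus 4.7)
The plan is to exploit the fact that both the objective and the distributional inequality constraint in \cref{eq:optimality_system} decouple over the disjoint pieces $\R \setminus E_>(t)$ and $(I_j)_{j\in J}$. On the complement $\R \setminus E_>(t)$ no monotonicity constraint is active, so the box constraint $v \leq 1$ is tight and $v \equiv 1$ pointwise maximizes the integrand $v - 1$. It then remains to solve, on each interval $I_j = (a_j, b_j)$ separately, the problem of maximizing $\int_{I_j} v \dd x$ subject to $0 \leq v \leq 1$ and $(o v)' \geq 0$ distributionally.

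The key reduction is the change of variable $g \coloneqq o\, v \in L^\infty(I_j)$. Under this substitution, the distributional inequality becomes the statement that $g$ admits a non-decreasing representative on $I_j$, while the pointwise bounds read $0 \leq g \leq o$. Using continuity and strict positivity of $o$ (guaranteed by \cref{as1}, in particular $\essinf_\R(o - q_0) > 0$ together with $q_0 \geq 0$), the left limit $g(b_j^-) \coloneqq \lim_{y \nearrow b_j} g(y)$ exists and satisfies $g(b_j^-) \leq o(b_j)$, so monotonicity propagates this upper bound backwards to every $x \in I_j$, yielding $g(x) \leq o(\sup I_j)$ for almost every $x \in I_j$. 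Equivalently, $v(x) \leq o(\sup I_j)/o(x)$ on $I_j$, and this pointwise upper bound is attained by the candidate $v_t$, whose associated $g$ is the constant $o(\sup I_j)$ and hence trivially non-decreasing.

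Uniqueness then follows immediately from the established pointwise inequality: if $\tilde v$ is another maximizer with $\tilde g \coloneqq o\, \tilde v$, then $\tilde g \leq o(\sup I_j)$ pointwise, while $\int_{I_j} \tilde g / o \dd x = \int_{I_j} o(\sup I_j)/o \dd x$, so positivity of $1/o$ forces $\tilde g = o(\sup I_j)$ and hence $\tilde v = v_t$ almost everywhere on $I_j$. Gluing the local optimizers with the value $1$ on the complement produces the claimed global maximizer.

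The delicate point I expect is verifying that the candidate $v_t$ actually belongs to the admissible class $L^\infty(\R; [0,1])$: this requires $o(x) \geq o(\sup I_j)$ on each $I_j$, which is not an explicit hypothesis but is implicit in the structural setup where $E_>(t)$ consists of coincidence intervals of the evolving density against the obstacle (compare \cref{thm:Limit_V_Rankine-Hugoniot}, where $o$ is strictly decreasing on the relevant interval and $\gamma_R(t) = \sup I_j$ plays exactly the role of the right endpoint). The remainder of the argument is a one-dimensional monotone bounding, with some care taken in passing from the distributional constraint to the pointwise non-decreasing representative of $g$ before invoking the left-limit argument.
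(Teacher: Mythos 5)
The paper states \cref{maxcon} without a proof, so there is no argument of the authors' to compare against; judged on its own terms, your proposal is correct and complete. The decomposition over $\R\setminus E_{>}(t)$ and the individual $I_j$, the change of variables $g \coloneqq o\,v$ turning the distributional constraint into the existence of a non-decreasing representative of $g$ with $0\leq g\leq o$, the propagation of the bound $g\leq o(\sup I_j)$ backward from the right endpoint using continuity of $o$, and the uniqueness step --- the nonnegative function $v_t-\tilde v$ has zero integral, hence vanishes a.e.\ --- are all sound. You also correctly identify the one point that genuinely needs an additional hypothesis: for the candidate $v_t$ to lie in the admissible class $L^\infty(\R;[0,1])$ one must have $o(x)\geq o(\sup I_j)$ for a.e.\ $x\in I_j$, which the statement of \cref{maxcon} does not assume. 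Without it, $v_t$ exceeds $1$ on a set of positive measure and the asserted ``unique solution'' is not even feasible. This is a gap in the theorem's hypotheses, not in your argument, and it is consistent with (indeed would follow from) the strictly-decreasing hypothesis on $o$ used in \cref{thm:Limit_V_Rankine-Hugoniot}, but the optimization theorem is stated without it and should be amended. Two small secondary remarks: when $\sup I_j=+\infty$ one should interpret $o(\sup I_j)$ as $\lim_{x\to+\infty}o(x)$, which exists by \cref{as1}; and the comparison of objective values in the uniqueness step implicitly requires the objective to be finite for the candidate, i.e.\ the total measure of $E_{>}(t)$ should be finite --- both are easily addressed but worth making explicit.
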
    
This characterizes the velocity at time \(t\in[0,T]\) so that one can see the solution to the obstacle as the solution of the conservation law
\[
\partial_{t}q(t,x)+ \partial_{x}\big( v_{t}(x)q(t,x)\big)=0.
\]
The choice of considering the solution on characteristics \cref{eq:defi_V} is due to our requiring the expression to be time-sensitive concerning the obstacle violation and differentiable.

Remarkably, under the assumptions in \cref{maxcon}, the characterization of the velocity coincides with the results in \cref{sec:characterization_limit}, namely, \cref{eq:1}. This underlines the reasonability of the proposed approach, as in a regular enough setting, all presented approaches (regularization in \(V_{\eps}\) in \cref{sec:Lipschitz}, characterization by Rankine--Hugoniot type approach in \cref{sec:characterization_limit}, and the optimization approach in this \cref{sec:motivation_optimization}) coincide.

\section{Visualization by means of a tailored Godunov scheme}~\label{sec:numerics}
In this section, we conduct numerical studies to ``validate'' the theoretical results about properties of the solution to~\cref{eq:conservation_law_smooth}, i.e.,
\[
\partial_t q_\eps(t, x) + \partial_x \big(V_\eps(o_i(x)-q_\eps(t, x))q_\eps(t, x) \big) = 0
\]
on $(0, T) \times \mathbb{R}$ with initial condition $q(0, \cdot) = q_0^i$, $i \in \lbrace 1, 2, 3 \rbrace$. Here, \emph{if not stated otherwise}, $V_\eps$ is as in~\cref{ex:V}, i.e., 
\( V_\eps(s) = 1-\exp\big(\tfrac{s}{\eps}\big),\ s\in\R_{\geq0}\), and $o_1, o_2, o_3:\mathbb{R} \rightarrow \mathbb{R}$,
\begin{align}
    &\bullet \ o_1(x) \coloneqq  -\exp(-x^2) + \tfrac{3}{2} \notag \\
    &\bullet \ o_2(x) \coloneqq -\tfrac{2}{3}\exp\left(-20\left(x+\tfrac{1}{2}\right)^2\right)-\exp\Big(-20x^2\Big)+\tfrac{3}{2}  \label{eq:example_defi_obstacle} \\
    &\bullet \ o_3(x) \coloneqq \min \left \lbrace \max \left\lbrace  |x|, \tfrac{1}{2} \right \rbrace, \tfrac{3}{2} \right \rbrace \notag
\intertext{for all $x \in \mathbb{R}$ and the corresponding initial data $q_0^1, q_0^2, q_0^3:\mathbb{R} \rightarrow \mathbb{R}$},
    &\bullet \ q_0^1(x) \coloneqq \chi_{\left[-1.5, -1\right]}(x)-x\chi_{\left(-1, 0 \right]} \notag \\
    &\bullet \ q_0^2(x) \coloneqq \max \lbrace 0, 1-3(x+1)^2 \rbrace^2 + \max \lbrace 0, 1-3(x-1)^2 \rbrace^2 \label{eq:example_defi_initial-datum}\\
    &\bullet \ q_0^3(x) \coloneqq \chi_{[-1.5, -1]}(x) \notag
\end{align}
for all $x \in \mathbb{R}$. Of note, $o_3$ (due to nondifferentiability) and $q_0^3$ (due to lack of Lipschitz bounds) are \emph{not} in accordance with~\cref{as1}.

To simulate~\cref{eq:conservation_law_smooth}, we employ a space-dependent Godunov method as laid out in \cite[(34)]{Zhang2003} but use an adaptive step size such that a Courant-Friedrichs-Lévy-type condition (for non--space-dependent flux case, see~\cite[(13.11)]{LeVeque1992}) holds in every time step.\\
For the spatial step size, we chose $\Delta x = 10^{-4}$. The adapted temporal step size then leads to several million time steps in every instance.

\subsection{Visualization of \texorpdfstring{$q_\eps$}{q} for different scenarios}\label{qepssim}
In~\cref{qepssim}, we let $\eps = \tfrac{1}{1024}$ and observe the behavior of $q_\eps$ for different obstacles and initial data.
Choosing $o_1$ as the obstacle and $q_0^1$ as the initial datum, we obtain the result in~\cref{q01o1}. Density accumulates ``in front of'' the obstacle, which can be interpreted as a ``traffic jam''. Also, density uses almost all the possible space under $o_1$. In \cref{3d}, we offer some more perspectives that also indicate very clearly that the obstacle is obeyed.

\begin{figure}
    \centering
    \includegraphics[scale=0.5,clip,trim=0 27 12 0 0]{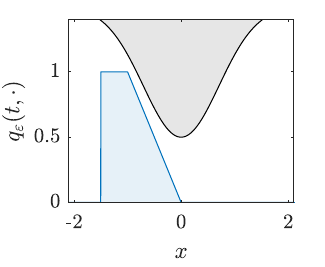}
    \includegraphics[scale=0.5,clip,trim=32 27 12 0 0]{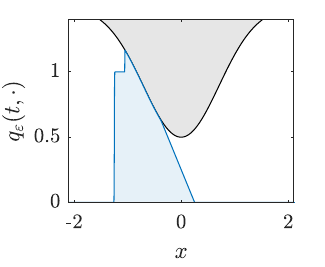}
    \includegraphics[scale=0.5,clip,trim=32 27 12 0 0]{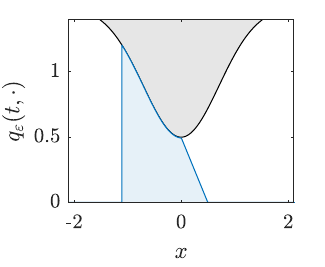}
    \includegraphics[scale=0.5,clip,trim=32 27 12 0 0]{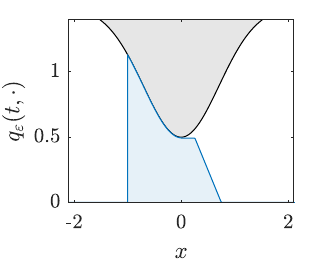}

        \includegraphics[scale=0.5,clip,trim=0 0 12 5]{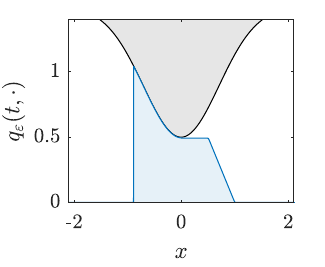}
    \includegraphics[scale=0.5,clip,trim=32 0 12 5]{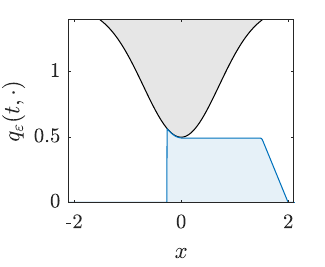}
    \includegraphics[scale=0.5,clip,trim=32 0 12 5]{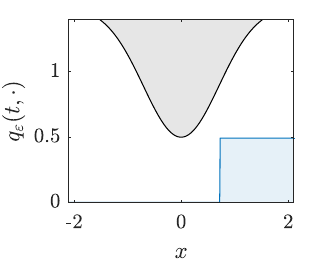}
    \includegraphics[scale=0.5,clip,trim=32 0 12 5]{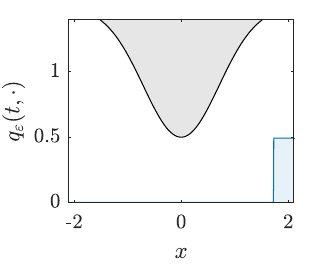}
    \caption{Plots of $o_1$ and \textcolor{matlabblue}{$q_\eps$} for $\eps = 2^{-10}$ and initial datum $q_{0}^{(1)}$ specified in \cref{eq:example_defi_obstacle,eq:example_defi_initial-datum}. The chosen time points are $t=0$, $t=0.25$, $t=0.5$, $t=0.75$ (top, from left to right) and $t=1$, $t=2$, $t=3$, $t=4$ (bottom, from left to right).}
    \label{q01o1}
\end{figure}
\begin{figure}
    \centering
    {\includegraphics[scale=0.5,clip,trim=11 0 25 5]{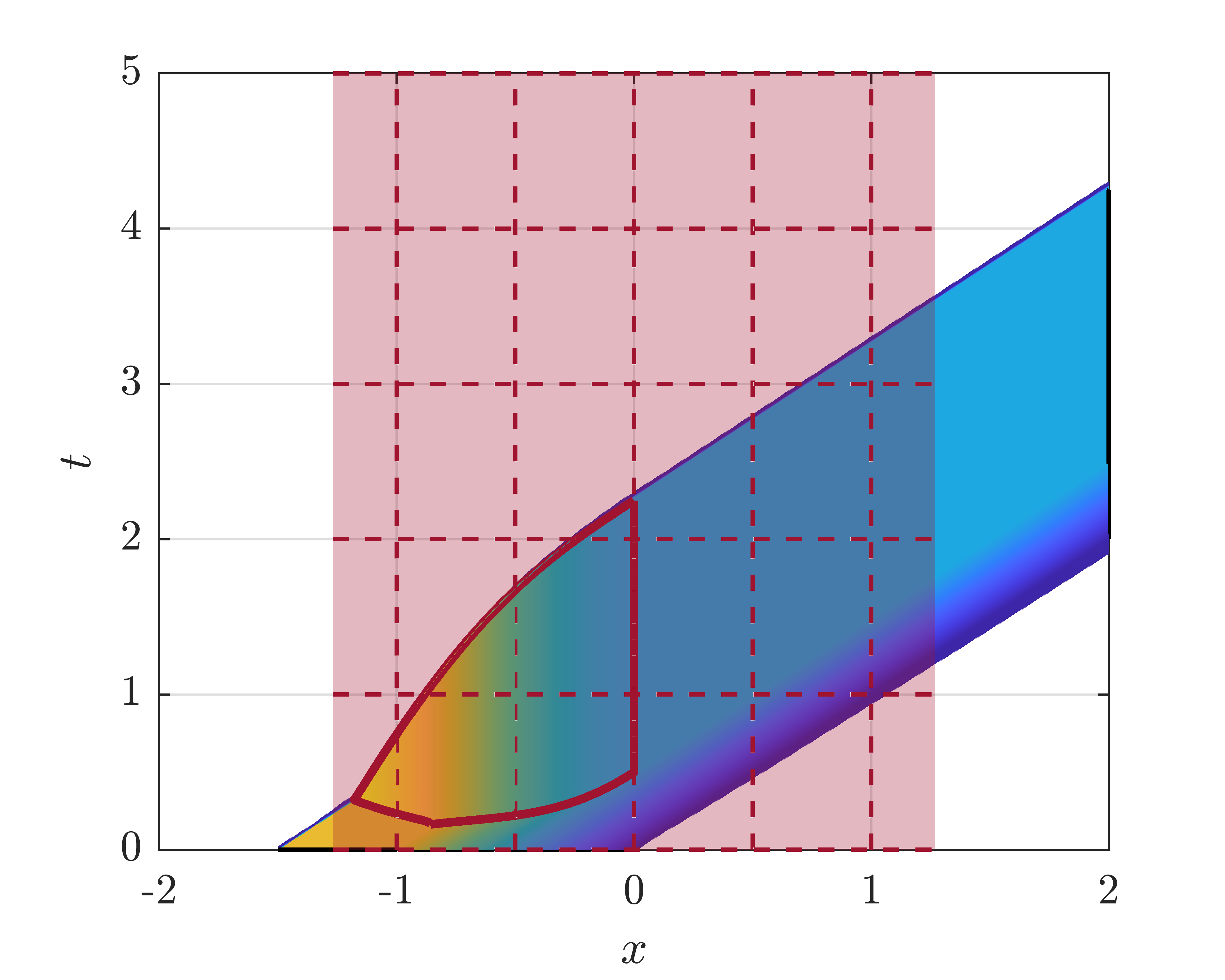}}
    {\includegraphics[scale=0.5,clip,trim=3 0 25 5]{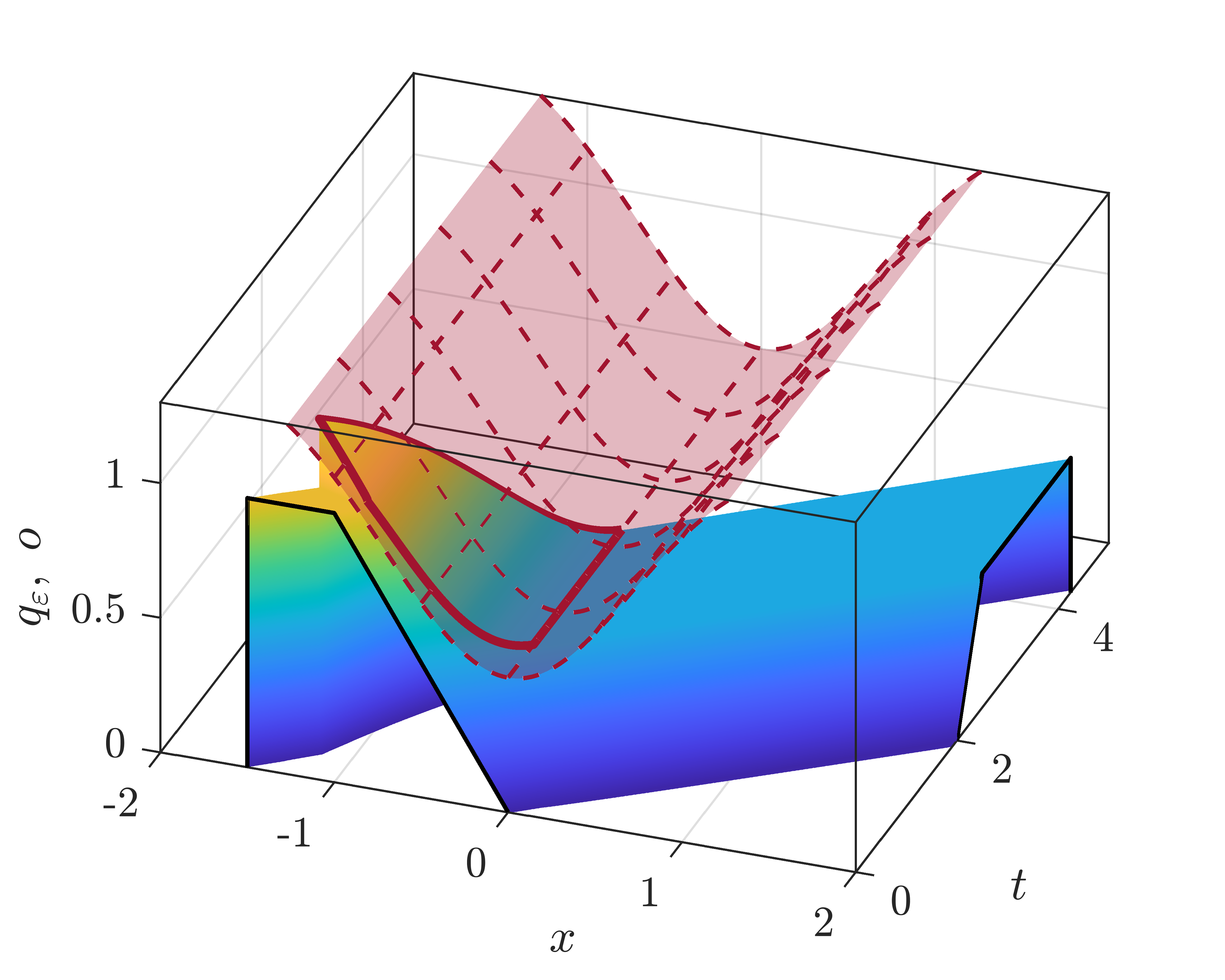}}
    \caption{Plots of $q_\eps$ (color gradient) and \textcolor{matlabred}{$o_1$} (see \eqref{eq:example_defi_obstacle}) from several perspectives. The initial datum is $q_0^1$ (see \eqref{eq:example_defi_initial-datum}). The thick red line surrounds the coincidence region $\lbrace \lim_{\eps \searrow 0} q_\eps = o \rbrace$.}
    \label{3d}
\end{figure}
Choosing the data $q_0^3$ and $o_3$, we obtain the results portrayed in~\cref{q03o3}.
\begin{figure}
    \centering
    \includegraphics[scale=0.5,clip,trim=0 27 12 0 0]{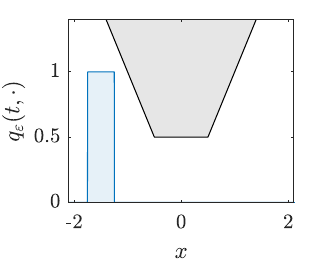}
    \includegraphics[scale=0.5,clip,trim=32 27 12 0 0]{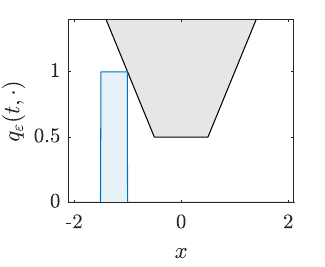}
    \includegraphics[scale=0.5,clip,trim=32 27 12 0 0]{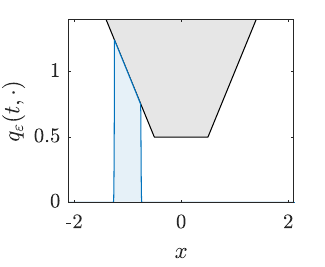}
    \includegraphics[scale=0.5,clip,trim=32 27 12 0 0]{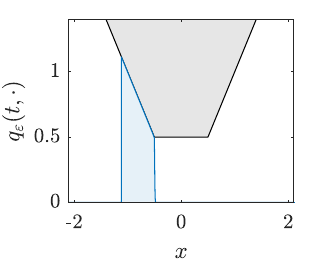}

        \includegraphics[scale=0.5,clip,trim=0 0 12 5]{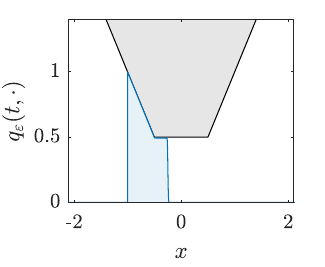}
    \includegraphics[scale=0.5,clip,trim=32 0 12 5]{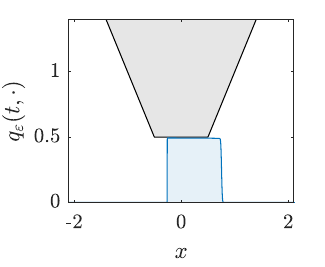}
    \includegraphics[scale=0.5,clip,trim=32 0 12 5]{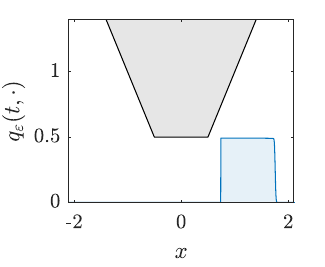}
    \includegraphics[scale=0.5,clip,trim=32 0 12 5]{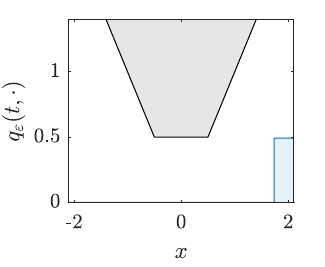}
    \caption{Plots of $o_3$ and \textcolor{matlabblue}{$q_\eps$} for $\eps = 2^{-10}$ and initial datum $q_{0}^{(3)}$ specified in \cref{eq:example_defi_obstacle,eq:example_defi_initial-datum}. Times are $t=0$, $t=0.25$, $t=0.5$, $t=0.75$ (top) and $t=1$, $t=2$, $t=3$, $t=4$ (bottom) from left to right}
    \label{q03o3}
\end{figure}

In fact, the lack of regularity of this data does not prevent the solution from behaving in a physical/intuitive manner. However, at $t=2$ and $t=3$, some minor diffusion effects can be observed in the sense that the solution seems to ``fade out'' to its right. This is a numerical problem and would be far greater if one employs a Lax--Friedrichs scheme~\cite[p.16]{Zhang2003} instead of a Godunov scheme.

The obstacle in the case of $o_2$ and $q_0^1$ (see~\cref{q01o2}) is a little more delicate. Still, the solution behaves in the desired manner. Once the first stalactite of the obstacle is passed, the density between the two stalactites only starts to increase once the second one is hit. After some time, all the ``room'' is used by the solution.
\begin{figure}
    \centering
    \includegraphics[scale=0.5,clip,trim=0 27 12 0 0]{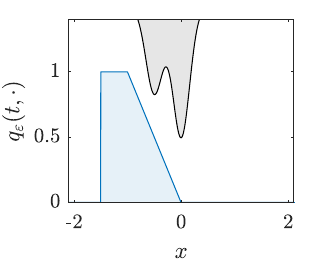}
    \includegraphics[scale=0.5,clip,trim=32 27 12 0 0]{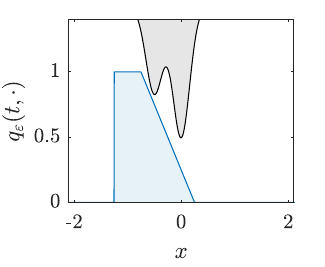}
    \includegraphics[scale=0.5,clip,trim=32 27 12 0 0]{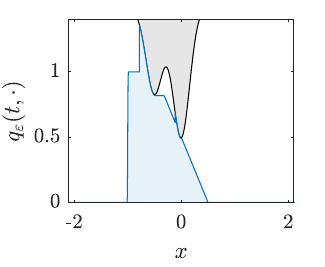}
    \includegraphics[scale=0.5,clip,trim=32 27 12 0 0]{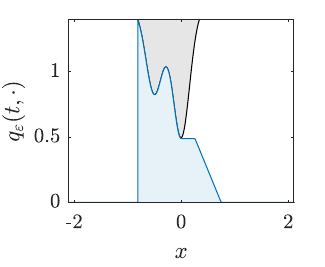}

        \includegraphics[scale=0.5,clip,trim=0 0 12 5]{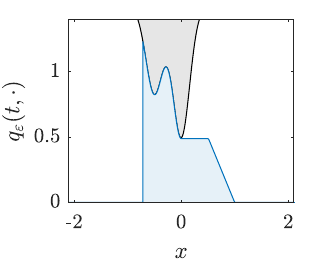}
    \includegraphics[scale=0.5,clip,trim=32 0 12 5]{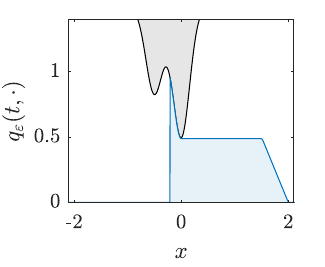}
    \includegraphics[scale=0.5,clip,trim=32 0 12 5]{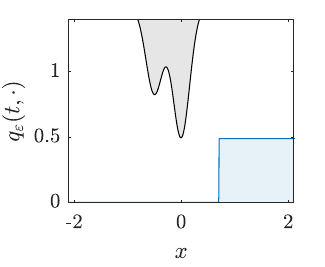}
    \includegraphics[scale=0.5,clip,trim=32 0 12 5]{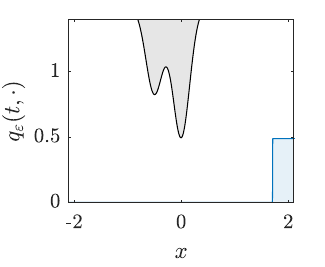}
    \caption{Plots of $o_2$ and \textcolor{matlabblue}{$q_\eps$} for $\eps = 2^{-10}$ and initial datum $q_{0}^{(1)}$ specified in \cref{eq:example_defi_obstacle,eq:example_defi_initial-datum}. Times are $t=0$, $t=0.25$, $t=0.5$, $t=0.75$ (top) and $t=1$, $t=2$, $t=3$, $t=4$ (bottom) from left to right}
    \label{q01o2}
\end{figure}

\subsection{\texorpdfstring{$q_\eps$}{q} as \texorpdfstring{$\eps \searrow 0$}{eps→0}}
To observe the behavior of $q_\eps$ with $\eps$ becoming very small, we simulate~\cref{eq:conservation_law_smooth} with initial datum $q_0^1$ and obstacle $o_1$ for several different $\eps$, as shown in~\cref{epskon}.
\begin{figure}
    \centering
    \includegraphics[scale=0.5,clip,trim=0 30 20 0]{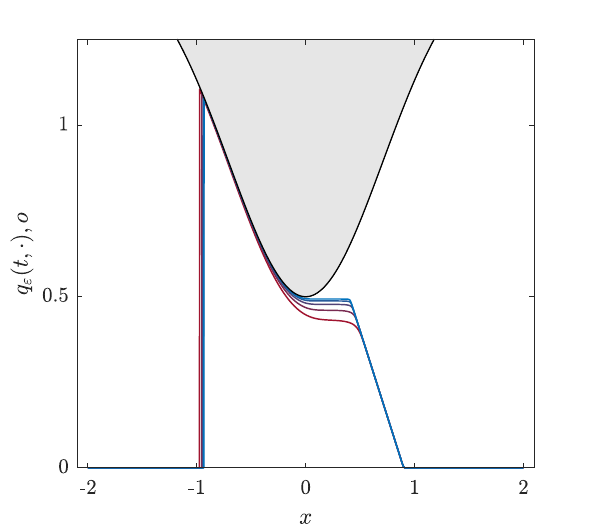}
    \includegraphics[scale=0.5,clip,trim=33 30 20 0]{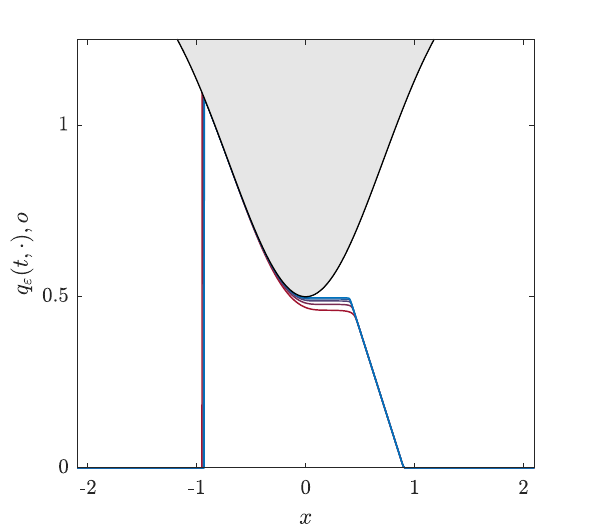}
    \includegraphics[scale=0.5,clip,trim=33 30 20 0]{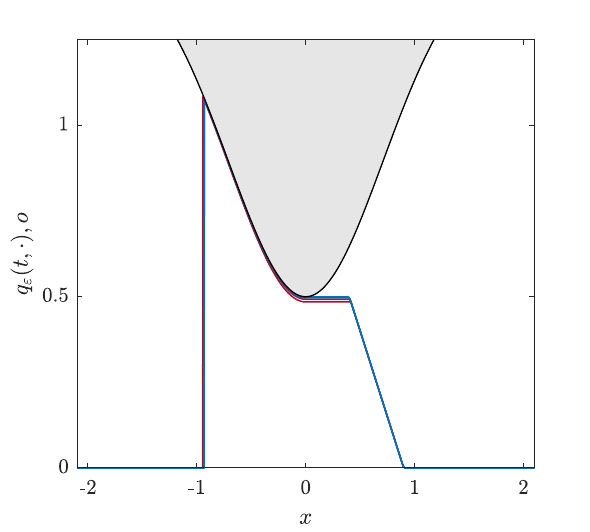}

    \includegraphics[scale=0.5,clip,trim=0 0 20 0]{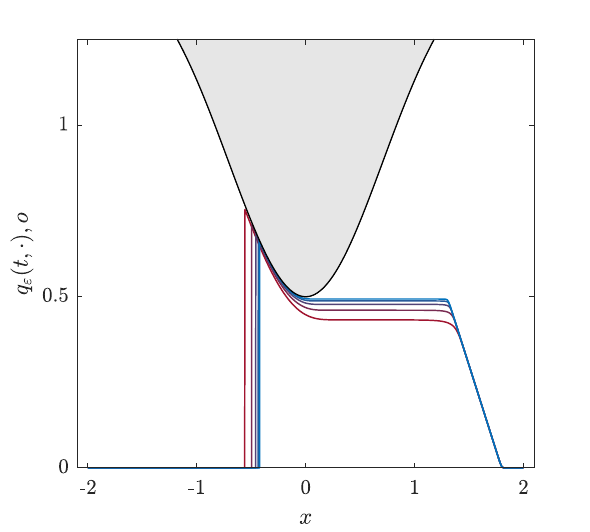}
    \includegraphics[scale=0.5,clip,trim=33 0 20 0]{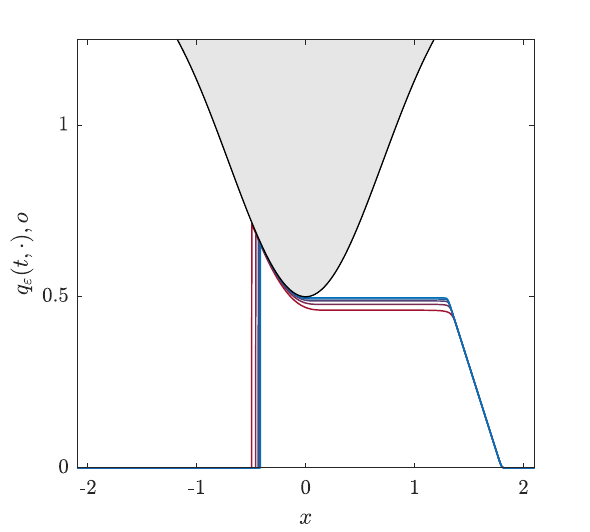}
    \includegraphics[scale=0.5,clip,trim=33 0 20 0]{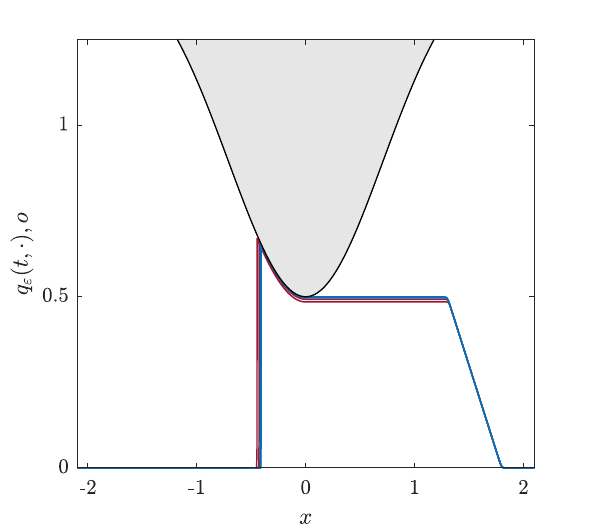}
    \caption{Plots of $o_1$ (see \cref{eq:example_defi_obstacle}) and $q_\eps(t,\cdot)$ for $\eps \in \left\lbrace \textcolor{matlabred}{2^{-6}}, \textcolor{matlabred1}{2^{-7}}, \textcolor{matlabred2}{2^{-8}}, \textcolor{matlabred3}{2^{-9}}, \textcolor{matlabblue}{2^{-10}} \right\rbrace$, where $t=0.9$ in the top row and $t=1.8$ in the bottom row. Here, the initial datum is $q^{(1)}_0$ (see \cref{eq:example_defi_initial-datum}) and $V_\eps$ is as in~\cref{ex:V} (left column), $V_\eps(x) = \tfrac{1}{2}\left(\tanh\big(\tfrac{x}{\eps}\big)+1 \right)$ (middle column) and $V_\eps(x) = \min\left \lbrace\max \left \lbrace \tfrac{x}{\eps}, 0 \right \rbrace, 1 \right \rbrace$ (right column) for all $x \in \mathbb{R}$.}
    \label{epskon}
\end{figure}

We see that for smaller $\eps$, the solution moves closer to the obstacle. This becomes especially evident around the point $x=0$. This behavior can be motivated by the fact that $V_\eps \nearrow 1$ as $\eps \searrow 0$ on $[0, \infty)$ (recall~\cref{asH}). So, the smaller $\eps$, the later $q_\eps$ slows down. \\ 
Thus, as suggested in~\cref{alllim} 1), the convergence of $q_\eps$ as $\eps \searrow 0$ can be identified numerically. 

Moreover, if we choose a different regularization of $V_\eps$, namely, $\mathbb{R} \ni x \mapsto \tfrac{1}{2}\left( \tanh\big(\tfrac{x}{\varepsilon} \big) + 1\right)$ or $\mathbb{R} \ni x \mapsto \min \left \lbrace \max \left \lbrace \tfrac{x}{\eps}, 0 \right \rbrace, 1 \right \rbrace$ (see \cref{epskon}), we see the same behavior. Note that the latter function (e.g., due to non-differentiability) does not meet the requirements in~\cref{asH}.
In fact, the solutions for larger $\eps$ are closer to the solution for $\eps = \tfrac{1}{1024}$ than in \cref{epskon}. This is due to the $\tanh$-regularization being more precise.

\subsection{Visualization of \texorpdfstring{$V_\eps$}{V eps}}
Motivated by~\cref{thm:Limit_V_Rankine-Hugoniot}, we hypothesize that if $t \in [0, T]$ and there exists an interval $I(t)$ such that $\lim_{\eps \searrow 0} q_\eps = o$ on $\lbrace t \rbrace \times I(t)$,  $\lim_{\eps \searrow 0} V_\eps(o(x)-q_\eps(t, x)) = \tfrac{o\left(b_t \right)}{o(x)}$ holds for all $x \in I(t)$, where $b_t \coloneqq \sup I(t)$. 
As we can not compute $\lim_{\eps \searrow 0} q_\eps$, we want to analyze $V_\eps(o-q_\eps)$ for the small value of $\eps = \tfrac{1}{1024}$. In fact, at least numerically, our examples in~\cref{Vkon} show the desired behavior.
\begin{figure}
    \centering
    \includegraphics[scale=0.5,clip,trim=0 27 20 0 0]{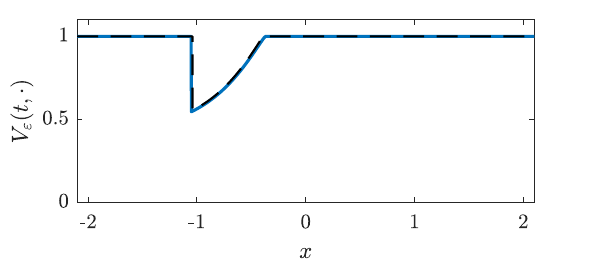}
    \includegraphics[scale=0.5,clip,trim=35 27 0 0 0]{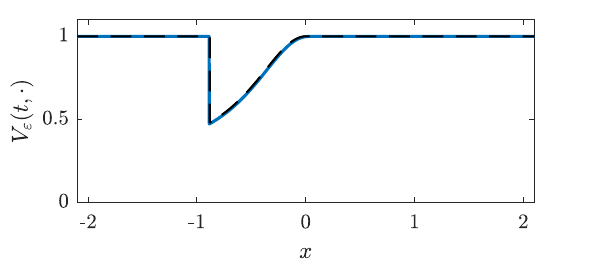}

    \includegraphics[scale=0.5,clip,trim=0 27 20 0 0]{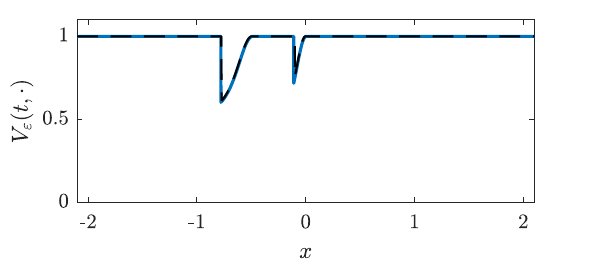}
    \includegraphics[scale=0.5,clip,trim=35 27 0 0 0]{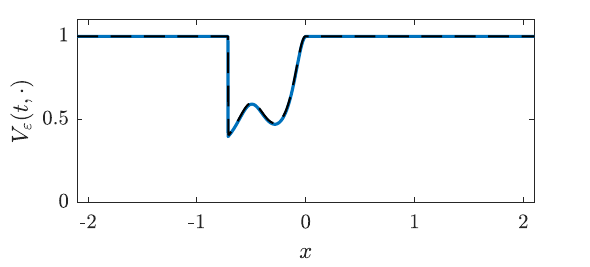}

    \includegraphics[scale=0.5,clip,trim=0 0 20 0 0]{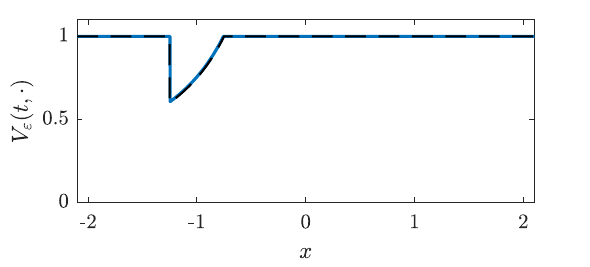}
    \includegraphics[scale=0.5,clip,trim=35 0 0 0 0]{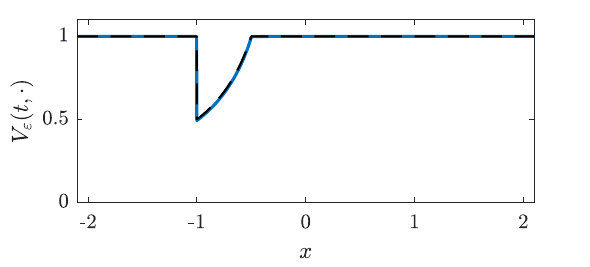}

    \caption{$V^\star$ according to \cref{thm:Limit_V_Rankine-Hugoniot} and \textcolor{matlabblue}{$V_\eps(o-q_\eps)$} with $\eps=2^{-10}$ for times $t=0.25, 1$ with $q_0^1$, $o_1$ (top from left to right), $t=0.5, 1$ with $q^0_2$, $o_2$ (middle from left to right), $t = 0.5, 1$ with $q_0^3$, $o_3$ (bottom from left to right). See \cref{eq:example_defi_obstacle,eq:example_defi_initial-datum} for the definition of the obstacles and initial data, respectively.}
    \label{Vkon}
\end{figure}
\section{Open problems---future research}
The most important question not answered in this work is whether the solution to~\cref{eq:discont_prob} is unique and if it is independent of the regularization $V_\eps$.\\
Moreover, several generalizations are straightforward to some degree:
\begin{itemize}
    \item The same approach works if we do not assume a constant velocity \(1\) if the obstacle is not present but rather a space- and time-dependent velocity satisfying required regularity for well-posedness.
    \item One can also consider nonlinear conservation laws \cite{Bressan2000}, i.e.,\ 
    \[
\partial_{t}q +\partial_{x}f(q)=0
    \]
    with the same scaling argument, resulting in
    \[
\partial_{t}q +\partial_{x}\big(V_{\eps}(o-q)f(q)\big)=0
    \]
    and \(V_{\eps}\) the regularization of the Heaviside function as in \cref{asH}.
    \item The same argument can be made for nonlocal conservation laws \cite{keimernonlocalbalance2017}.
    \item In the case of systems of conservation laws \cite{Bressan2000} and multi-d conservation laws, the presented approach may require some refinement. Particularly in the multi-d case, researchers may choose to steer toward the direction of the velocity field, which cannot be represented by a simple rescaling.
\end{itemize}
\section*{Data Availability}
There is no associated data.

\section*{Acknowledgments}
L.~Pflug and J.~Rodestock have been supported by the DFG -- Project-ID 416229255 -- SFB 1411.

\bibliography{obstacle}


\begin{thebibliography}{54}
\ifx \bisbn   \undefined \def \bisbn  #1{ISBN #1}\fi
\ifx \binits  \undefined \def \binits#1{#1}\fi
\ifx \bauthor  \undefined \def \bauthor#1{#1}\fi
\ifx \batitle  \undefined \def \batitle#1{#1}\fi
\ifx \bjtitle  \undefined \def \bjtitle#1{#1}\fi
\ifx \bvolume  \undefined \def \bvolume#1{\textbf{#1}}\fi
\ifx \byear  \undefined \def \byear#1{#1}\fi
\ifx \bissue  \undefined \def \bissue#1{#1}\fi
\ifx \bfpage  \undefined \def \bfpage#1{#1}\fi
\ifx \blpage  \undefined \def \blpage #1{#1}\fi
\ifx \burl  \undefined \def \burl#1{\textsf{#1}}\fi
\ifx \doiurl  \undefined \def \doiurl#1{\url{https://doi.org/#1}}\fi
\ifx \betal  \undefined \def \betal{\textit{et al.}}\fi
\ifx \binstitute  \undefined \def \binstitute#1{#1}\fi
\ifx \binstitutionaled  \undefined \def \binstitutionaled#1{#1}\fi
\ifx \bctitle  \undefined \def \bctitle#1{#1}\fi
\ifx \beditor  \undefined \def \beditor#1{#1}\fi
\ifx \bpublisher  \undefined \def \bpublisher#1{#1}\fi
\ifx \bbtitle  \undefined \def \bbtitle#1{#1}\fi
\ifx \bedition  \undefined \def \bedition#1{#1}\fi
\ifx \bseriesno  \undefined \def \bseriesno#1{#1}\fi
\ifx \blocation  \undefined \def \blocation#1{#1}\fi
\ifx \bsertitle  \undefined \def \bsertitle#1{#1}\fi
\ifx \bsnm \undefined \def \bsnm#1{#1}\fi
\ifx \bsuffix \undefined \def \bsuffix#1{#1}\fi
\ifx \bparticle \undefined \def \bparticle#1{#1}\fi
\ifx \barticle \undefined \def \barticle#1{#1}\fi
\bibcommenthead
\ifx \bconfdate \undefined \def \bconfdate #1{#1}\fi
\ifx \botherref \undefined \def \botherref #1{#1}\fi
\ifx \url \undefined \def \url#1{\textsf{#1}}\fi
\ifx \bchapter \undefined \def \bchapter#1{#1}\fi
\ifx \bbook \undefined \def \bbook#1{#1}\fi
\ifx \bcomment \undefined \def \bcomment#1{#1}\fi
\ifx \oauthor \undefined \def \oauthor#1{#1}\fi
\ifx \citeauthoryear \undefined \def \citeauthoryear#1{#1}\fi
\ifx \endbibitem  \undefined \def \endbibitem {}\fi
\ifx \bconflocation  \undefined \def \bconflocation#1{#1}\fi
\ifx \arxivurl  \undefined \def \arxivurl#1{\textsf{#1}}\fi
\csname PreBibitemsHook\endcsname

\bibitem[\protect\citeauthoryear{Berthelin et~al.}{2007}]{1berthelin2008model}
\begin{barticle}
\bauthor{\bsnm{Berthelin}, \binits{F.}},
\bauthor{\bsnm{Degond}, \binits{P.}},
\bauthor{\bsnm{Delitala}, \binits{M.}},
\bauthor{\bsnm{Rascle}, \binits{M.}}:
\batitle{A model for the formation and evolution of traffic jams}.
\bjtitle{Archive for Rational Mechanics and Analysis}
\bvolume{187}(\bissue{2}),
\bfpage{185}--\blpage{220}
(\byear{2007})
\doiurl{10.1007/s00205-007-0061-9}
\end{barticle}
\endbibitem

\bibitem[\protect\citeauthoryear{Andreianov et~al.}{2016a}]{1andreianov}
\begin{barticle}
\bauthor{\bsnm{Andreianov}, \binits{B.}},
\bauthor{\bsnm{Donadello}, \binits{C.}},
\bauthor{\bsnm{Razafison}, \binits{U.}},
\bauthor{\bsnm{Rosini}, \binits{M.D.}}:
\batitle{Qualitative behaviour and numerical approximation of solutions to
  conservation laws with non-local point constraints on the flux and modeling
  of crowd dynamics at the bottlenecks}.
\bjtitle{ESAIM Math. Model. Numer. Anal.}
\bvolume{50}(\bissue{5}),
\bfpage{1269}--\blpage{1287}
(\byear{2016})
\doiurl{10.1051/m2an/2015078}
\end{barticle}
\endbibitem

\bibitem[\protect\citeauthoryear{Andreianov et~al.}{2016b}]{1andreianov2}
\begin{barticle}
\bauthor{\bsnm{Andreianov}, \binits{B.}},
\bauthor{\bsnm{Donadello}, \binits{C.}},
\bauthor{\bsnm{Rosini}, \binits{M.D.}}:
\batitle{A second-order model for vehicular traffics with local point
  constraints on the flow}.
\bjtitle{Math. Models Methods Appl. Sci.}
\bvolume{26}(\bissue{4}),
\bfpage{751}--\blpage{802}
(\byear{2016})
\doiurl{10.1142/S0218202516500172}
\end{barticle}
\endbibitem

\bibitem[\protect\citeauthoryear{Chalons et~al.}{2013}]{1chalons}
\begin{barticle}
\bauthor{\bsnm{Chalons}, \binits{C.}},
\bauthor{\bsnm{Goatin}, \binits{P.}},
\bauthor{\bsnm{Seguin}, \binits{N.}}:
\batitle{General constrained conservation laws. {A}pplication to pedestrian
  flow modeling}.
\bjtitle{Netw. Heterog. Media}
\bvolume{8}(\bissue{2}),
\bfpage{433}--\blpage{463}
(\byear{2013})
\doiurl{10.3934/nhm.2013.8.433}
\end{barticle}
\endbibitem

\bibitem[\protect\citeauthoryear{Berthelin}{2002}]{1berthelin}
\begin{barticle}
\bauthor{\bsnm{Berthelin}, \binits{F.}}:
\batitle{Existence and weak stability for a pressureless model with unilateral
  constraint}.
\bjtitle{Math. Models Methods Appl. Sci.}
\bvolume{12}(\bissue{2}),
\bfpage{249}--\blpage{272}
(\byear{2002})
\doiurl{10.1142/S0218202502001635}
\end{barticle}
\endbibitem

\bibitem[\protect\citeauthoryear{De~Nitti et~al.}{2024}]{denitti2024pointwise}
\begin{botherref}
\oauthor{\bsnm{De~Nitti}, \binits{N.}},
\oauthor{\bsnm{Serre}, \binits{D.}},
\oauthor{\bsnm{Zuazua}, \binits{E.}}:
Pointwise constraints for scalar conservation laws with positive wave velocity.
cvgmt preprint
(2024).
\url{http://cvgmt.sns.it/paper/6472/}
\end{botherref}
\endbibitem

\bibitem[\protect\citeauthoryear{Colombo and Goatin}{2007}]{1colombo}
\begin{barticle}
\bauthor{\bsnm{Colombo}, \binits{R.M.}},
\bauthor{\bsnm{Goatin}, \binits{P.}}:
\batitle{A well posed conservation law with a variable unilateral constraint}.
\bjtitle{J. Differential Equations}
\bvolume{234}(\bissue{2}),
\bfpage{654}--\blpage{675}
(\byear{2007})
\doiurl{10.1016/j.jde.2006.10.014}
\end{barticle}
\endbibitem

\bibitem[\protect\citeauthoryear{Dymski et~al.}{2018}]{1dymski}
\begin{barticle}
\bauthor{\bsnm{Dymski}, \binits{N.S.}},
\bauthor{\bsnm{Goatin}, \binits{P.}},
\bauthor{\bsnm{Rosini}, \binits{M.D.}}:
\batitle{Existence of \(\boldsymbol{BV}\) solutions for a non-conservative
  constrained {A}w-{R}ascle-{Z}hang model for vehicular traffic}.
\bjtitle{J. Math. Anal. Appl.}
\bvolume{467}(\bissue{1}),
\bfpage{45}--\blpage{66}
(\byear{2018})
\doiurl{10.1016/j.jmaa.2018.07.025}
\end{barticle}
\endbibitem

\bibitem[\protect\citeauthoryear{Garavello and Goatin}{2011}]{1garavello}
\begin{barticle}
\bauthor{\bsnm{Garavello}, \binits{M.}},
\bauthor{\bsnm{Goatin}, \binits{P.}}:
\batitle{The {A}w-{R}ascle traffic model with locally constrained flow}.
\bjtitle{J. Math. Anal. Appl.}
\bvolume{378}(\bissue{2}),
\bfpage{634}--\blpage{648}
(\byear{2011})
\doiurl{10.1016/j.jmaa.2011.01.033}
\end{barticle}
\endbibitem

\bibitem[\protect\citeauthoryear{Garavello and Villa}{2017}]{1garavello2}
\begin{barticle}
\bauthor{\bsnm{Garavello}, \binits{M.}},
\bauthor{\bsnm{Villa}, \binits{S.}}:
\batitle{The {C}auchy problem for the {A}w-{R}ascle-{Z}hang traffic model with
  locally constrained flow}.
\bjtitle{J. Hyperbolic Differ. Equ.}
\bvolume{14}(\bissue{3}),
\bfpage{393}--\blpage{414}
(\byear{2017})
\doiurl{10.1142/S0219891617500138}
\end{barticle}
\endbibitem

\bibitem[\protect\citeauthoryear{Bayen et~al.}{2022}]{Bayen2022}
\begin{barticle}
\bauthor{\bsnm{Bayen}, \binits{A.}},
\bauthor{\bsnm{Friedrich}, \binits{J.}},
\bauthor{\bsnm{Keimer}, \binits{A.}},
\bauthor{\bsnm{Pflug}, \binits{L.}},
\bauthor{\bsnm{Veeravalli}, \binits{T.}}:
\batitle{Modeling multilane traffic with moving obstacles by nonlocal balance
  laws}.
\bjtitle{SIAM Journal on Applied Dynamical Systems}
\bvolume{21}(\bissue{2}),
\bfpage{1495}--\blpage{1538}
(\byear{2022})
\doiurl{10.1137/20m1366654}
\end{barticle}
\endbibitem

\bibitem[\protect\citeauthoryear{Levi}{2001}]{1levi}
\begin{barticle}
\bauthor{\bsnm{Levi}, \binits{L.}}:
\batitle{Obstacle problems for scalar conservation laws}.
\bjtitle{M2AN Math. Model. Numer. Anal.}
\bvolume{35}(\bissue{3}),
\bfpage{575}--\blpage{593}
(\byear{2001})
\doiurl{10.1051/m2an:2001127}
\end{barticle}
\endbibitem

\bibitem[\protect\citeauthoryear{Rodrigues}{2004}]{1rodrigues}
\begin{barticle}
\bauthor{\bsnm{Rodrigues}, \binits{J.F.}}:
\batitle{On hyperbolic variational inequalities of first order and some
  applications}.
\bjtitle{Monatsh. Math.}
\bvolume{142}(\bissue{1-2}),
\bfpage{157}--\blpage{177}
(\byear{2004})
\doiurl{10.1007/s00605-004-0238-3}
\end{barticle}
\endbibitem

\bibitem[\protect\citeauthoryear{Rodrigues}{2002}]{1rodrigues2}
\begin{botherref}
\oauthor{\bsnm{Rodrigues}, \binits{J.F.}}:
On the hyperbolic obstacle problem of first order.
vol. 23,
pp. 253--266
(2002).
\doiurl{10.1142/S0252959902000249} .
Dedicated to the memory of Jacques-Louis Lions.
\url{https://doi.org/10.1142/S0252959902000249}
\end{botherref}
\endbibitem

\bibitem[\protect\citeauthoryear{Goudon and Vasseur}{2016}]{1goudon}
\begin{barticle}
\bauthor{\bsnm{Goudon}, \binits{T.}},
\bauthor{\bsnm{Vasseur}, \binits{A.}}:
\batitle{On a model for mixture flows: derivation, dissipation and stability
  properties}.
\bjtitle{Arch. Ration. Mech. Anal.}
\bvolume{220}(\bissue{1}),
\bfpage{1}--\blpage{35}
(\byear{2016})
\doiurl{10.1007/s00205-015-0925-3}
\end{barticle}
\endbibitem

\bibitem[\protect\citeauthoryear{Saldanha~da Gama et~al.}{2012}]{1saldanha}
\begin{barticle}
\bauthor{\bsnm{Gama}, \binits{R.M.}},
\bauthor{\bsnm{Pedrosa~Filho}, \binits{J.J.}},
\bauthor{\bsnm{Martins-Costa}, \binits{M.L.}}:
\batitle{Modeling the saturation process of flows through rigid porous media by
  the solution of a nonlinear hyperbolic system with one constrained unknown}.
\bjtitle{ZAMM Z. Angew. Math. Mech.}
\bvolume{92}(\bissue{11-12}),
\bfpage{921}--\blpage{936}
(\byear{2012})
\doiurl{10.1002/zamm.201100031}
\end{barticle}
\endbibitem

\bibitem[\protect\citeauthoryear{Berthelin and
  Bouchut}{2003}]{1berthelin-bouchut}
\begin{barticle}
\bauthor{\bsnm{Berthelin}, \binits{F.}},
\bauthor{\bsnm{Bouchut}, \binits{F.}}:
\batitle{Weak solutions for a hyperbolic system with unilateral constraint and
  mass loss}.
\bjtitle{Ann. Inst. H. Poincar\'{e} C Anal. Non Lin\'{e}aire}
\bvolume{20}(\bissue{6}),
\bfpage{975}--\blpage{997}
(\byear{2003})
\doiurl{10.1016/S0294-1449(03)00012-X}
\end{barticle}
\endbibitem

\bibitem[\protect\citeauthoryear{Rossi et~al.}{2020}]{Rossi2020}
\begin{barticle}
\bauthor{\bsnm{Rossi}, \binits{E.}},
\bauthor{\bsnm{Weißen}, \binits{J.}},
\bauthor{\bsnm{Goatin}, \binits{P.}},
\bauthor{\bsnm{G\"{o}ttlich}, \binits{S.}}:
\batitle{Well-posedness of a non-local model for material flow on conveyor
  belts}.
\bjtitle{ESAIM: Mathematical Modelling and Numerical Analysis}
\bvolume{54}(\bissue{2}),
\bfpage{679}--\blpage{704}
(\byear{2020})
\doiurl{10.1051/m2an/2019062}
\end{barticle}
\endbibitem

\bibitem[\protect\citeauthoryear{Fernández-Real and
  Figalli}{2020}]{FernandezReal2020}
\begin{barticle}
\bauthor{\bsnm{Fernández-Real}, \binits{X.}},
\bauthor{\bsnm{Figalli}, \binits{A.}}:
\batitle{On the obstacle problem for the 1d wave equation}.
\bjtitle{Mathematics in Engineering}
\bvolume{2}(\bissue{4}),
\bfpage{584}--\blpage{597}
(\byear{2020})
\doiurl{10.3934/mine.2020026}
\end{barticle}
\endbibitem

\bibitem[\protect\citeauthoryear{Bellomo et~al.}{2022}]{1bellomo2022towards}
\begin{barticle}
\bauthor{\bsnm{Bellomo}, \binits{N.}},
\bauthor{\bsnm{Gibelli}, \binits{L.}},
\bauthor{\bsnm{Quaini}, \binits{A.}},
\bauthor{\bsnm{Reali}, \binits{A.}}:
\batitle{Towards a mathematical theory of behavioral human crowds}.
\bjtitle{Mathematical Models and Methods in Applied Sciences}
\bvolume{32}(\bissue{02}),
\bfpage{321}--\blpage{358}
(\byear{2022})
\end{barticle}
\endbibitem

\bibitem[\protect\citeauthoryear{Lions and Stampacchia}{1967}]{2lions}
\begin{barticle}
\bauthor{\bsnm{Lions}, \binits{J.-L.}},
\bauthor{\bsnm{Stampacchia}, \binits{G.}}:
\batitle{Variational inequalities}.
\bjtitle{Comm. Pure Appl. Math.}
\bvolume{20},
\bfpage{493}--\blpage{519}
(\byear{1967})
\doiurl{10.1002/cpa.3160200302}
\end{barticle}
\endbibitem

\bibitem[\protect\citeauthoryear{Kinderlehrer and Stampacchia}{1980}]{2kinder}
\begin{bbook}
\bauthor{\bsnm{Kinderlehrer}, \binits{D.}},
\bauthor{\bsnm{Stampacchia}, \binits{G.}}:
\bbtitle{An Introduction to Variational Inequalities and Their Applications}.
\bsertitle{Pure and Applied Mathematics},
vol. \bseriesno{88},
p. \bfpage{313}.
\bpublisher{Academic Press, Inc. [Harcourt Brace Jovanovich, Publishers]},
\blocation{New York-London}
(\byear{1980})
\end{bbook}
\endbibitem

\bibitem[\protect\citeauthoryear{Rodrigues}{1987}]{2rodriguesbook}
\begin{bbook}
\bauthor{\bsnm{Rodrigues}, \binits{J.-F.}}:
\bbtitle{Obstacle Problems in Mathematical Physics}.
\bsertitle{North-Holland Mathematics Studies},
vol. \bseriesno{134},
p. \bfpage{352}.
\bpublisher{North-Holland Publishing Co.},
\blocation{Amsterdam}
(\byear{1987}).
\bcomment{Notas de Matem\'{a}tica, 114. [Mathematical Notes]}
\end{bbook}
\endbibitem

\bibitem[\protect\citeauthoryear{Mignot and Puel}{1976}]{2mignot}
\begin{barticle}
\bauthor{\bsnm{Mignot}, \binits{F.}},
\bauthor{\bsnm{Puel}, \binits{J.-P.}}:
\batitle{In\'{e}quations variationnelles et quasivariationnelles hyperboliques
  du premier ordre}.
\bjtitle{J. Math. Pures Appl. (9)}
\bvolume{55}(\bissue{3}),
\bfpage{353}--\blpage{378}
(\byear{1976})
\end{barticle}
\endbibitem

\bibitem[\protect\citeauthoryear{Br\'{e}zis}{1972}]{2brezis}
\begin{barticle}
\bauthor{\bsnm{Br\'{e}zis}, \binits{H.}}:
\batitle{Probl\`emes unilat\'{e}raux}.
\bjtitle{J. Math. Pures Appl. (9)}
\bvolume{51},
\bfpage{1}--\blpage{168}
(\byear{1972})
\end{barticle}
\endbibitem

\bibitem[\protect\citeauthoryear{Rudd and Schmitt}{2002}]{2rudd}
\begin{barticle}
\bauthor{\bsnm{Rudd}, \binits{M.}},
\bauthor{\bsnm{Schmitt}, \binits{K.}}:
\batitle{Variational inequalities of elliptic and parabolic type}.
\bjtitle{Taiwanese J. Math.}
\bvolume{6}(\bissue{3}),
\bfpage{287}--\blpage{322}
(\byear{2002})
\doiurl{10.11650/twjm/1500558298}
\end{barticle}
\endbibitem

\bibitem[\protect\citeauthoryear{Korte et~al.}{2009}]{2korte}
\begin{barticle}
\bauthor{\bsnm{Korte}, \binits{R.}},
\bauthor{\bsnm{Kuusi}, \binits{T.}},
\bauthor{\bsnm{Siljander}, \binits{J.}}:
\batitle{Obstacle problem for nonlinear parabolic equations}.
\bjtitle{J. Differential Equations}
\bvolume{246}(\bissue{9}),
\bfpage{3668}--\blpage{3680}
(\byear{2009})
\doiurl{10.1016/j.jde.2009.02.006}
\end{barticle}
\endbibitem

\bibitem[\protect\citeauthoryear{Rodrigues and Santos}{2012}]{2rodrigues}
\begin{barticle}
\bauthor{\bsnm{Rodrigues}, \binits{J.F.}},
\bauthor{\bsnm{Santos}, \binits{L.}}:
\batitle{Quasivariational solutions for first order quasilinear equations with
  gradient constraint}.
\bjtitle{Arch. Ration. Mech. Anal.}
\bvolume{205}(\bissue{2}),
\bfpage{493}--\blpage{514}
(\byear{2012})
\doiurl{10.1007/s00205-012-0511-x}
\end{barticle}
\endbibitem

\bibitem[\protect\citeauthoryear{Chalub and Rodrigues}{2006}]{chalub}
\begin{barticle}
\bauthor{\bsnm{Chalub}, \binits{F.A.C.C.}},
\bauthor{\bsnm{Rodrigues}, \binits{J.F.}}:
\batitle{A class of kinetic models for chemotaxis with threshold to prevent
  overcrowding}.
\bjtitle{Port. Math. (N.S.)}
\bvolume{63}(\bissue{2}),
\bfpage{227}--\blpage{250}
(\byear{2006})
\end{barticle}
\endbibitem

\bibitem[\protect\citeauthoryear{B\"{o}gelein et~al.}{2011}]{2bogelein}
\begin{barticle}
\bauthor{\bsnm{B\"{o}gelein}, \binits{V.}},
\bauthor{\bsnm{Duzaar}, \binits{F.}},
\bauthor{\bsnm{Mingione}, \binits{G.}}:
\batitle{Degenerate problems with irregular obstacles}.
\bjtitle{J. Reine Angew. Math.}
\bvolume{650},
\bfpage{107}--\blpage{160}
(\byear{2011})
\doiurl{10.1515/CRELLE.2011.006}
\end{barticle}
\endbibitem

\bibitem[\protect\citeauthoryear{Amorim et~al.}{2017}]{1amorim}
\begin{barticle}
\bauthor{\bsnm{Amorim}, \binits{P.}},
\bauthor{\bsnm{Neves}, \binits{W.}},
\bauthor{\bsnm{Rodrigues}, \binits{J.F.}}:
\batitle{The obstacle-mass constraint problem for hyperbolic conservation laws.
  {S}olvability}.
\bjtitle{Ann. Inst. H. Poincar\'{e} C Anal. Non Lin\'{e}aire}
\bvolume{34}(\bissue{1}),
\bfpage{221}--\blpage{248}
(\byear{2017})
\doiurl{10.1016/j.anihpc.2015.11.003}
\end{barticle}
\endbibitem

\bibitem[\protect\citeauthoryear{Kružkov}{1970}]{Kruzkov1970}
\begin{barticle}
\bauthor{\bsnm{Kružkov}, \binits{S.N.}}:
\batitle{First order quasilinear equations in several independent variables}.
\bjtitle{Mathematics of the USSR-Sbornik}
\bvolume{10}(\bissue{2}),
\bfpage{217}
(\byear{1970})
\doiurl{10.1070/SM1970v010n02ABEH002156}
\end{barticle}
\endbibitem

\bibitem[\protect\citeauthoryear{Alt}{2016}]{alt2016eng}
\begin{bbook}
\bauthor{\bsnm{Alt}, \binits{H.W.}}:
\bbtitle{Linear Functional Analysis}.
\bpublisher{Springer},
\blocation{London}
(\byear{2016}).
\doiurl{10.1007/978-1-4471-7280-2} .
\burl{http://dx.doi.org/10.1007/978-1-4471-7280-2}
\end{bbook}
\endbibitem

\bibitem[\protect\citeauthoryear{Evans and Gariepy}{2015}]{evans}
\begin{bbook}
\bauthor{\bsnm{Evans}, \binits{L.C.}},
\bauthor{\bsnm{Gariepy}, \binits{R.F.}}:
\bbtitle{Measure Theory and Fine Properties of Functions, Revised Edition}.
\bpublisher{Chapman and Hall/CRC},
\blocation{Boca Raton}
(\byear{2015}).
\doiurl{10.1201/b18333} .
\burl{http://dx.doi.org/10.1201/b18333}
\end{bbook}
\endbibitem

\bibitem[\protect\citeauthoryear{Godlewski and Raviart}{1991}]{godlewski1991}
\begin{bbook}
\bauthor{\bsnm{Godlewski}, \binits{E.}},
\bauthor{\bsnm{Raviart}, \binits{P.A.}}:
\bbtitle{Hyperbolic Systems of Conservation Laws}.
\bsertitle{Math{\'e}matiques \& applications}.
\bpublisher{Ellipses},
\blocation{Paris}
(\byear{1991}).
\burl{https://books.google.de/books?id=X3qyvAEACAAJ}
\end{bbook}
\endbibitem

\bibitem[\protect\citeauthoryear{Brezis}{2010}]{brezis2010functional}
\begin{bbook}
\bauthor{\bsnm{Brezis}, \binits{H.}}:
\bbtitle{Functional Analysis, Sobolev Spaces and Partial Differential
  Equations}.
\bpublisher{Springer},
\blocation{New York}
(\byear{2010}).
\doiurl{10.1007/978-0-387-70914-7} .
\burl{http://dx.doi.org/10.1007/978-0-387-70914-7}
\end{bbook}
\endbibitem

\bibitem[\protect\citeauthoryear{Simon}{1986}]{Simon1986CompactSI}
\begin{barticle}
\bauthor{\bsnm{Simon}, \binits{J.}}:
\batitle{Compact sets in the space \({L}^{p}(0,{T}; {B})\)}.
\bjtitle{Annali di Matematica Pura ed Applicata}
\bvolume{146},
\bfpage{65}--\blpage{96}
(\byear{1986})
\end{barticle}
\endbibitem

\bibitem[\protect\citeauthoryear{Milgrom and Segal}{2002}]{Milgrom2002}
\begin{barticle}
\bauthor{\bsnm{Milgrom}, \binits{P.}},
\bauthor{\bsnm{Segal}, \binits{I.}}:
\batitle{Envelope theorems for arbitrary choice sets}.
\bjtitle{Econometrica}
\bvolume{70}(\bissue{2}),
\bfpage{583}--\blpage{601}
(\byear{2002})
\doiurl{10.1111/1468-0262.00296}
\end{barticle}
\endbibitem

\bibitem[\protect\citeauthoryear{Nocedal and
  Wright}{2006}]{nocedal2006numerical}
\begin{bbook}
\bauthor{\bsnm{Nocedal}, \binits{J.}},
\bauthor{\bsnm{Wright}, \binits{S.}}:
\bbtitle{Numerical Optimization}.
\bpublisher{Springer},
\blocation{New York}
(\byear{2006}).
\doiurl{10.1007/978-0-387-40065-5} .
\burl{http://dx.doi.org/10.1007/978-0-387-40065-5}
\end{bbook}
\endbibitem

\bibitem[\protect\citeauthoryear{Teschl}{2012}]{teschlbook}
\begin{bbook}
\bauthor{\bsnm{Teschl}, \binits{G.}}:
\bbtitle{Ordinary Differential Equations and Dynamical Systems}.
\bpublisher{American Mathematical Society},
\blocation{Providence}
(\byear{2012}).
\doiurl{10.1090/gsm/140} .
\burl{http://dx.doi.org/10.1090/gsm/140}
\end{bbook}
\endbibitem

\bibitem[\protect\citeauthoryear{Leoni}{2009}]{leoni2009first}
\begin{bbook}
\bauthor{\bsnm{Leoni}, \binits{G.}}:
\bbtitle{A First Course in Sobolev Spaces}.
\bpublisher{American Mathematical Society},
\blocation{Providence}
(\byear{2009}).
\doiurl{10.1090/gsm/105} .
\burl{http://dx.doi.org/10.1090/gsm/105}
\end{bbook}
\endbibitem

\bibitem[\protect\citeauthoryear{BULÍČEK et~al.}{2011}]{Bulek2011scalar}
\begin{barticle}
\bauthor{\bsnm{BULÍČEK}, \binits{M.}},
\bauthor{\bsnm{GWIAZDA}, \binits{P.}},
\bauthor{\bsnm{MÁLEK}, \binits{J.}},
\bauthor{\bsnm{ŚWIERCZEWSKA-GWIAZDA}, \binits{A.}}:
\batitle{On scalar hyperbolic conservation laws with a discontinuous flux}.
\bjtitle{Mathematical Models and Methods in Applied Sciences}
\bvolume{21}(\bissue{01}),
\bfpage{89}--\blpage{113}
(\byear{2011})
\doiurl{10.1142/s021820251100499x}
\end{barticle}
\endbibitem

\bibitem[\protect\citeauthoryear{Bulíček et~al.}{2017}]{Bulek2017unified}
\begin{barticle}
\bauthor{\bsnm{Bulíček}, \binits{M.}},
\bauthor{\bsnm{Gwiazda}, \binits{P.}},
\bauthor{\bsnm{Świerczewska-Gwiazda}, \binits{A.}}:
\batitle{On unified theory for scalar conservation laws with fluxes and sources
  discontinuous with respect to the unknown}.
\bjtitle{Journal of Differential Equations}
\bvolume{262}(\bissue{1}),
\bfpage{313}--\blpage{364}
(\byear{2017})
\doiurl{10.1016/j.jde.2016.09.020}
\end{barticle}
\endbibitem

\bibitem[\protect\citeauthoryear{Carrillo}{2003}]{Carrillo2003conservation}
\begin{barticle}
\bauthor{\bsnm{Carrillo}, \binits{J.}}:
\batitle{Conservation laws with discontinuous flux functions and boundary
  condition}.
\bjtitle{Journal of Evolution Equations}
\bvolume{3}(\bissue{2}),
\bfpage{283}--\blpage{301}
(\byear{2003})
\doiurl{10.1007/s00028-003-0095-x}
\end{barticle}
\endbibitem

\bibitem[\protect\citeauthoryear{Martin and
  Vovelle}{2008}]{Martin2008convergence}
\begin{barticle}
\bauthor{\bsnm{Martin}, \binits{S.}},
\bauthor{\bsnm{Vovelle}, \binits{J.}}:
\batitle{Convergence of implicit finite volume methods for scalar conservation
  laws with discontinuous flux function}.
\bjtitle{ESAIM: Mathematical Modelling and Numerical Analysis}
\bvolume{42}(\bissue{5}),
\bfpage{699}--\blpage{727}
(\byear{2008})
\doiurl{10.1051/m2an:2008023}
\end{barticle}
\endbibitem

\bibitem[\protect\citeauthoryear{Dias and
  Figueira}{2005}]{Dias2005approximation}
\begin{barticle}
\bauthor{\bsnm{Dias}, \binits{J.-P.}},
\bauthor{\bsnm{Figueira}, \binits{M.}}:
\batitle{On the approximation of the solutions of the riemann problem for a
  discontinuous conservation law}.
\bjtitle{Bulletin of the Brazilian Mathematical Society, New Series}
\bvolume{36}(\bissue{1}),
\bfpage{115}--\blpage{125}
(\byear{2005})
\doiurl{10.1007/s00574-005-0031-5}
\end{barticle}
\endbibitem

\bibitem[\protect\citeauthoryear{Dias and Figueira}{2004}]{Dias2004riemann}
\begin{barticle}
\bauthor{\bsnm{Dias}, \binits{J.-P.}},
\bauthor{\bsnm{Figueira}, \binits{M.}}:
\batitle{On the riemann problem for some discontinuous systems of conservation
  laws describing phase transitions}.
\bjtitle{Communications on Pure \& Applied Analysis}
\bvolume{3}(\bissue{1}),
\bfpage{53}--\blpage{58}
(\byear{2004})
\doiurl{10.3934/cpaa.2004.3.53}
\end{barticle}
\endbibitem

\bibitem[\protect\citeauthoryear{Rankine}{1870}]{Rankine1870thermodynamic}
\begin{barticle}
\bauthor{\bsnm{Rankine}, \binits{W.J.M.}}:
\batitle{Xv.\ on the thermodynamic theory of waves of finite longitudinal
  disturbance}.
\bjtitle{Philosophical Transactions of the Royal Society of London}
\bvolume{160},
\bfpage{277}--\blpage{288}
(\byear{1870})
\doiurl{10.1098/rstl.1870.0015}
\end{barticle}
\endbibitem

\bibitem[\protect\citeauthoryear{Hugoniot}{1887}]{hugoniot1887memoir}
\begin{botherref}
\oauthor{\bsnm{Hugoniot}, \binits{H.}}:
Memoir on the propagation of movements in bodies, especially perfect gases
  (first part).
J. de l’Ecole Polytechnique
\textbf{57}(3)
(1887)
\end{botherref}
\endbibitem

\bibitem[\protect\citeauthoryear{Keimer and
  Pflug}{2017}]{keimernonlocalbalance2017}
\begin{barticle}
\bauthor{\bsnm{Keimer}, \binits{A.}},
\bauthor{\bsnm{Pflug}, \binits{L.}}:
\batitle{Existence, uniqueness and regularity results on nonlocal balance
  laws}.
\bjtitle{Journal of Differential Equations}
\bvolume{263}(\bissue{7}),
\bfpage{4023}--\blpage{4069}
(\byear{2017})
\doiurl{10.1016/j.jde.2017.05.015}
\end{barticle}
\endbibitem

\bibitem[\protect\citeauthoryear{Bouchut and
  James}{1998}]{Bouchut1998onedimensional}
\begin{barticle}
\bauthor{\bsnm{Bouchut}, \binits{F.}},
\bauthor{\bsnm{James}, \binits{F.}}:
\batitle{One-dimensional transport equations with discontinuous coefficients}.
\bjtitle{Nonlinear Analysis: Theory, Methods \& Applications}
\bvolume{32}(\bissue{7}),
\bfpage{891}--\blpage{933}
(\byear{1998})
\doiurl{10.1016/s0362-546x(97)00536-1}
\end{barticle}
\endbibitem

\bibitem[\protect\citeauthoryear{Zhang and Liu}{2003}]{Zhang2003}
\begin{barticle}
\bauthor{\bsnm{Zhang}, \binits{P.}},
\bauthor{\bsnm{Liu}, \binits{R.-X.}}:
\batitle{Hyperbolic conservation laws with space-dependent flux: I.
  characteristics theory and riemann problem}.
\bjtitle{Journal of Computational and Applied Mathematics}
\bvolume{156}(\bissue{1}),
\bfpage{1}--\blpage{21}
(\byear{2003})
\doiurl{10.1016/s0377-0427(02)00880-4}
\end{barticle}
\endbibitem

\bibitem[\protect\citeauthoryear{LeVeque}{1992}]{LeVeque1992}
\begin{bbook}
\bauthor{\bsnm{LeVeque}, \binits{R.J.}}:
\bbtitle{Numerical Methods for Conservation Laws}.
\bpublisher{Birkh\"{a}user},
\blocation{Basel}
(\byear{1992}).
\doiurl{10.1007/978-3-0348-8629-1} .
\burl{http://dx.doi.org/10.1007/978-3-0348-8629-1}
\end{bbook}
\endbibitem

\bibitem[\protect\citeauthoryear{Bressan}{2000}]{Bressan2000}
\begin{bbook}
\bauthor{\bsnm{Bressan}, \binits{A.}}:
\bbtitle{Hyperbolic Systems of Conservation Laws: The One-Dimensional Cauchy
  Problem}.
\bpublisher{Oxford University PressOxford},
\blocation{Oxford}
(\byear{2000}).
\doiurl{10.1093/oso/9780198507000.001.0001} .
\burl{http://dx.doi.org/10.1093/oso/9780198507000.001.0001}
\end{bbook}
\endbibitem

\end{thebibliography}

\end{document}